\documentclass{amsart}
\usepackage{amssymb,url,color}
\usepackage{tikz}
\usepackage{adjustbox}
\usetikzlibrary{arrows,arrows.meta,calc,matrix,shapes,cd,patterns,decorations.pathreplacing}

\newenvironment{pic}[1][]
{\begin{aligned}\begin{tikzpicture}[#1]}
{\end{tikzpicture}\end{aligned}}

\newenvironment{minipic}[1][]
{\begin{aligned}\begin{tikzpicture}[font=\tiny,baseline={(0,-.1)},#1]\tikzstyle{morphism}=[minimorphism]\tikzstyle{dot}=[minidot]}
{\end{tikzpicture}\end{aligned}}

\def\strarr{length=2pt, width=3pt}
\tikzset{arrow/.style={decoration={
    markings,
    mark=at position #1 with \arrow{>[\strarr]}},
    postaction=decorate},
    reverse arrow/.style={decoration={
    markings,
    mark=at position #1 with {{\arrow{<[\strarr]}}}},
    postaction=decorate}
}

\tikzstyle{dot}=[circle, draw=black, fill=white, inner sep=.4ex, on layer=foreground]
\tikzstyle{minidot}=[circle, draw=black, fill=white, inner sep=.3ex, on layer=foreground]
\tikzstyle{blackdot}=[dot, fill=black!50]
\tikzstyle{whitedot}=[dot, fill=white]

\newif\ifvflip\pgfkeys{/tikz/vflip/.is if=vflip}
\newif\ifhflip\pgfkeys{/tikz/hflip/.is if=hflip}
\newif\ifhvflip\pgfkeys{/tikz/hvflip/.is if=hvflip}
\newlength\morphismheight
\setlength\morphismheight{4mm}
\newlength\minimorphismheight
\setlength\minimorphismheight{3mm}
\newlength\wedgewidth
\setlength\wedgewidth{5pt}
\tikzset{width/.initial=1mm}
\makeatletter
\tikzstyle{morphism}=[font=\small,morphismshape]
\tikzstyle{minimorphism}=[font=\tiny,minimorphismshape]
\pgfdeclareshape{morphismshape}
{
    \savedanchor\centerpoint
    {
        \pgf@x=0pt
        \pgf@y=0pt
    }
    \anchor{center}{\centerpoint}
    \anchorborder{\centerpoint}
    \saveddimen\overallwidth
    {
        \pgfkeysgetvalue{/tikz/width}{\minwidth}
        \pgf@x=\wd\pgfnodeparttextbox
        \ifdim\pgf@x<\minwidth
            \pgf@x=\minwidth
        \fi
    }
    \savedanchor{\upperrightcorner}
    {
        \pgf@y=.5\ht\pgfnodeparttextbox
        \advance\pgf@y by -.5\dp\pgfnodeparttextbox
        \pgf@x=.5\wd\pgfnodeparttextbox
    }
    \anchor{north}
    {
        \pgf@x=0pt
        \pgf@y=0.5\morphismheight
    }
    \anchor{north east}
    {
        \pgf@x=\overallwidth
        \multiply \pgf@x by 2
        \divide \pgf@x by 5
        \pgf@y=0.5\morphismheight
    }
    \anchor{east}
    {
        \pgf@x=\overallwidth
        \divide \pgf@x by 2
        \advance \pgf@x by 5pt
        \pgf@y=0pt
    }
    \anchor{west}
    {
        \pgf@x=-\overallwidth
        \divide \pgf@x by 2
        \advance \pgf@x by -5pt
        \pgf@y=0pt
    }
    \anchor{north west}
    {
        \pgf@x=-\overallwidth
        \multiply \pgf@x by 2
        \divide \pgf@x by 5
        \pgf@y=0.5\morphismheight
    }
    \anchor{south east}
    {
        \pgf@x=\overallwidth
        \multiply \pgf@x by 2
        \divide \pgf@x by 5
        \pgf@y=-0.5\morphismheight
    }
    \anchor{south west}
    {
        \pgf@x=-\overallwidth
        \multiply \pgf@x by 2
        \divide \pgf@x by 5
        \pgf@y=-0.5\morphismheight
    }
    \anchor{south}
    {
        \pgf@x=0pt
        \pgf@y=-0.5\morphismheight
    }
    \anchor{text}
    {
        \upperrightcorner
        \pgf@x=-\pgf@x
        \pgf@y=-\pgf@y
    }
    \backgroundpath
    {
    \begin{scope}
        \pgfkeysgetvalue{/tikz/fill}{\morphismfill}
        \pgfsetstrokecolor{black}
        \pgfsetlinewidth{.7pt}
        \begin{scope}
        \pgfsetstrokecolor{black}
        \pgfsetfillcolor{white}
        \pgfsetlinewidth{.7pt}
                \ifhflip
                    \pgftransformyscale{-1}
                \fi
                \ifvflip
                    \pgftransformxscale{-1}
                \fi
                \ifhvflip
                    \pgftransformxscale{-1}
                    \pgftransformyscale{-1}
                \fi
                \pgfpathmoveto{\pgfpoint
                    {-0.5*\overallwidth-5pt}
                    {0.5*\morphismheight}}
                \pgfpathlineto{\pgfpoint
                    {0.5*\overallwidth+5pt}
                    {0.5*\morphismheight}}
                \pgfpathlineto{\pgfpoint
                    {0.5*\overallwidth + \wedgewidth}
                    {-0.5*\morphismheight}}
                \pgfpathlineto{\pgfpoint
                    {-0.5*\overallwidth-5pt}
                    {-0.5*\morphismheight}}
                \pgfpathclose
                \pgfusepath{fill,stroke}
        \end{scope}
    \end{scope}
    }
}
\pgfdeclareshape{minimorphismshape}
{
    \savedanchor\centerpoint
    {
        \pgf@x=0pt
        \pgf@y=0pt
    }
    \anchor{center}{\centerpoint}
    \anchorborder{\centerpoint}
    \saveddimen\overallwidth
    {
        \pgfkeysgetvalue{/tikz/width}{\minwidth}
        \pgf@x=\wd\pgfnodeparttextbox
        \ifdim\pgf@x<\minwidth
            \pgf@x=\minwidth
        \fi
    }
    \savedanchor{\upperrightcorner}
    {
        \pgf@y=.5\ht\pgfnodeparttextbox
        \advance\pgf@y by -.5\dp\pgfnodeparttextbox
        \pgf@x=.5\wd\pgfnodeparttextbox
    }
    \anchor{north}
    {
        \pgf@x=0pt
        \pgf@y=0.5\minimorphismheight
    }
    \anchor{north east}
    {
        \pgf@x=\overallwidth
        \multiply \pgf@x by 2
        \divide \pgf@x by 5
        \pgf@y=0.5\minimorphismheight
    }
    \anchor{east}
    {
        \pgf@x=\overallwidth
        \divide \pgf@x by 2
        \advance \pgf@x by 5pt
        \pgf@y=0pt
    }
    \anchor{west}
    {
        \pgf@x=-\overallwidth
        \divide \pgf@x by 2
        \advance \pgf@x by -5pt
        \pgf@y=0pt
    }
    \anchor{north west}
    {
        \pgf@x=-\overallwidth
        \multiply \pgf@x by 2
        \divide \pgf@x by 5
        \pgf@y=0.5\minimorphismheight
    }
    \anchor{south east}
    {
        \pgf@x=\overallwidth
        \multiply \pgf@x by 2
        \divide \pgf@x by 5
        \pgf@y=-0.5\minimorphismheight
    }
    \anchor{south west}
    {
        \pgf@x=-\overallwidth
        \multiply \pgf@x by 2
        \divide \pgf@x by 5
        \pgf@y=-0.5\minimorphismheight
    }
    \anchor{south}
    {
        \pgf@x=0pt
        \pgf@y=-0.5\minimorphismheight
    }
    \anchor{text}
    {
        \upperrightcorner
        \pgf@x=-\pgf@x
        \pgf@y=-\pgf@y
    }
    \backgroundpath
    {
    \begin{scope}
        \pgfkeysgetvalue{/tikz/fill}{\morphismfill}
        \pgfsetstrokecolor{black}
        \pgfsetlinewidth{.7pt}
        \begin{scope}
        \pgfsetstrokecolor{black}
        \pgfsetfillcolor{white}
        \pgfsetlinewidth{.7pt}
                \ifhflip
                    \pgftransformyscale{-1}
                \fi
                \ifvflip
                    \pgftransformxscale{-1}
                \fi
                \ifhvflip
                    \pgftransformxscale{-1}
                    \pgftransformyscale{-1}
                \fi
                \pgfpathmoveto{\pgfpoint
                    {-0.5*\overallwidth-5pt}
                    {0.5*\minimorphismheight}}
                \pgfpathlineto{\pgfpoint
                    {0.5*\overallwidth+5pt}
                    {0.5*\minimorphismheight}}
                \pgfpathlineto{\pgfpoint
                    {0.5*\overallwidth + \wedgewidth}
                    {-0.5*\minimorphismheight}}
                \pgfpathlineto{\pgfpoint
                    {-0.5*\overallwidth-5pt}
                    {-0.5*\minimorphismheight}}
                \pgfpathclose
                \pgfusepath{fill,stroke}
        \end{scope}
    \end{scope}
    }
}

\pgfkeys{%
  /tikz/on layer/.code={
    \pgfonlayer{#1}\begingroup
    \aftergroup\endpgfonlayer
    \aftergroup\endgroup
  },
  /tikz/node on layer/.code={
    \gdef\node@@on@layer{%
      \setbox\tikz@tempbox=\hbox\bgroup\pgfonlayer{#1}\unhbox\tikz@tempbox\endpgfonlayer\pgfsetlinewidth{\thickness}\egroup}
    \aftergroup\node@on@layer
  },
  /tikz/end node on layer/.code={
    \endpgfonlayer\endgroup\endgroup
  }
}
\def\node@on@layer{\aftergroup\node@@on@layer}
\makeatother

\pgfdeclarelayer{foreground}
\pgfdeclarelayer{background}
\pgfdeclarelayer{morphismlayer}
\pgfdeclarelayer{edgelayer}
\pgfdeclarelayer{nodelayer}
\pgfsetlayers{background,main,morphismlayer,foreground,edgelayer,nodelayer}

\tikzset{halo/.style={
         preaction={draw,white,line width=4pt,-},
         preaction={draw,white,ultra thick, shorten >=-2.5\pgflinewidth}}}

\DeclareMathOperator{\ISub}{ISub}
\DeclareMathOperator{\T}{\mathcal{T}}
\DeclareMathOperator{\dom}{dom}
\DeclareMathOperator{\cod}{cod}
\DeclareMathOperator{\cO}{\mathcal{O}}
\DeclareMathOperator{\downset}{\downarrow}
\renewcommand{\S}{\mathcal{S}}
\newcommand{\cat}[1]{\ensuremath{\mathbf{#1}}}
\newcommand{\op}{\ensuremath{^{\mathrm{op}}}}
\newcommand{\id}[1][]{\ensuremath{1_{#1}}}
\newcommand{\total}[1]{\ensuremath{\lceil #1 \rceil}}
\newcommand{\rest}[1]{\ensuremath{\overline{#1}}}
\newcommand{\range}[1]{\ensuremath{\widehat{#1}}}

\theoremstyle{plain}
\newtheorem{theorem}{Theorem}[section]
\newtheorem{proposition}[theorem]{Proposition}
\newtheorem{corollary}[theorem]{Corollary}
\newtheorem{lemma}[theorem]{Lemma}

\theoremstyle{definition}
\newtheorem{definition}[theorem]{Definition}
\newtheorem{example}[theorem]{Example}
\newtheorem{remark}[theorem]{Remark}

\begin{document}
\title{Tensor-restriction categories}
\author{Chris Heunen}
\address{University of Edinburgh, United Kingdom}
\email{chris.heunen@ed.ac.uk}
\author{Jean-Simon Pacaud Lemay}
\address{University of Oxford, United Kingdom}
\email{jean-simon.lemay@kellogg.ox.ac.uk}
\date{\today}
\thanks{We thank Nick Bezhanishvili, Cole Comfort, Richard Garner, Rory Lucyshyn-Wright, Chad Nester, and Sean Tull for useful discussions.}
\begin{abstract}
 Restriction categories were established to handle maps that are partially defined with respect to composition.
 Tensor topology realises that monoidal categories have an intrinsic notion of space, and deals with objects and maps that are partially defined with respect to this spatial structure.
 We introduce a construction that turns a firm monoidal category into a restriction category and axiomatise the monoidal restriction categories that arise this way, called tensor-restriction categories.
\end{abstract}
\maketitle
\allowdisplaybreaks

\section{Introduction}

The tensor product in a monoidal category encodes a notion of space. 
For example, in the category of sheaves over a topological space, as well as in the category of continuous fields of Banach spaces over a topological space, the open sets correspond precisely to so-called \emph{subunits}: subobjects of the tensor unit that are idempotent (in a sense made precise below). 
Under the mild condition that the monoidal category is \emph{firm}, meaning that its subunits are closed under tensor product, we may think about the semilattice they form as a base space underlying the monoidal category. 
Moreover, the tensor product provides methods to deal with partiality, restriction, and support, with respect to this base space. \emph{Tensor topology}~\cite{enriquemolinerheunentull:tensortopology} deals with objects and maps that are partially defined with respect to the tensor structure.\footnote{Tensor topology should not be confused with monoidal topology~\cite{hofmannsealtholen:monoidaltopology}.}

There is another dimension to monoidal categories than the tensor product, namely composition of morphisms. \emph{Restriction categories}~\cite{cockettlack:restrictioncategories} were established to deal with maps that are partially defined with respect to composition. For restriction categories, the elegantly simple main technique is to record the domain of definition of a morphism $f \colon A \to B$ in an endomorphism $\rest{f} \colon A \to A$. Thus each object $A$ has a space $\mathcal{O}(A) = \{ e \colon A \to A \mid \rest{e} = e \}$ underlying it, and there are methods to deal with partiality, restriction, and support, with respect to this base space and in terms of composition.

This article brings the two notions of partiality together. We introduce a construction that turns a firm monoidal category $\cat{C}$ into a restriction category $\S[\cat{C}]$ in a functorial way. We analyse which restriction categories arise this way, and axiomatise them as \emph{tensor-restriction categories}. In the other direction, we will show that the known construction of taking the restriction-total maps turns a tensor-restriction category $\cat{X}$ back into a firm monoidal category $\T[\cat{X}]$. Indeed, $\S[\T[\cat{X}]] \simeq \cat{X}$ and $\T[\S[\cat{C}]] \simeq \cat{C}$, and this gives an equivalence of categories.

This result has advantages to both tensor topology and restriction category theory. On the one hand, restriction categories are relatively more established, and tensor topology is relatively more recent, so one may hope that techniques from restriction category theory can usefully be applied to tensor topology. On the other hand, tensor topology gives a new source of examples of restriction categories, and these are in a sense more naturally dealt with as tensor-restriction categories. 

Another advantage of tensor topology over restriction categories may be that monoidal categories afford a graphical calculus~\cite{heunenvicary:cqm}. We leave to future work the idea of adapting the graphical calculus to subunits, but make some initial remarks in Section~\ref{sec:alternatives}, which may lead to appealing visual methods to deal with partiality and restriction in general. 
More generally, the two dimensions of composition and tensor product are brought together in bicategories. We leave open the question of whether our results have a common generalisation in `restriction bicategories', but
point out that the orthogonal factorisation system of tensor-restriction categories (see Proposition~\ref{prop:factorisation}) strongly resembles the interchange law of bicategories; see also~\cite{cockettheunen:compactinverse}.

We start by recalling the basics of tensor topology in Section~\ref{sec:subunits}. The required notions from restriction category theory are recalled in Section~\ref{sec:Sconstruction}. This section simultaneously introduces the $\S$-construction and analyses it to illustrate these notions. Section~\ref{sec:monoidalrestrictioncats} starts to consider categories that have both a tensor product and a restriction structure, and Section~\ref{sec:tensorrestrictioncats} axiomatises the resulting tensor-restriction categories. We conclude, in Section~\ref{sec:alternatives}, by discussing possible alternative characterisations of tensor-restriction categories. As a matter of terminology, to distinguish between similar notions from tensor topology and restriction categories, we will consistently prefix them, so that a morphism can for example be restriction-total or tensor-total.

\section{Subunits}\label{sec:subunits}

In this section, we recall the notion of a subunit and its properties~\cite{enriquemolinerheunentull:tensortopology}, and draw them in the graphical calculus of monoidal categories~\cite{heunenvicary:cqm}. For a monoidal category $\cat{C}$, denote the monoidal product as $\otimes$, the monoidal unit as $I$, and the coherence natural isomorphisms as $\lambda_A \colon I \otimes A \to A$, $\rho_A\colon A \otimes I \to A$, and $\alpha_{A,B,C}\colon A \otimes (B \otimes C) \to (A \otimes B) \otimes C$. If $\cat{C}$ is braided, write $\sigma_{A,B}\colon A \otimes B \to B \otimes A$ for the braiding. We will often omit subscripts and simply write $\lambda$, $\rho$, $\alpha$, and $\sigma$ when there can be no confusion. We will also abbreviate identity morphisms $1_A\colon A \to A$ simply as $A$. 

First recall that in any category $\cat{C}$, a subobject of an object $A$ is an equivalence class of monomorphisms $s \colon S \rightarrowtail A$, where $s$ is equivalent to $s^\prime \colon S^\prime \rightarrowtail A$ if there is an isomorphism $m \colon S \to S^\prime$ with $s^\prime \circ m = s$. For subunits, we will use a small letter $s$ to denote a representing monomorphism and a capital letter $S$ for its domain. 

\begin{definition}\cite[Definition 2.1]{enriquemolinerheunentull:tensortopology} 
 A \emph{subunit} in a monoidal category $\cat{C}$ is a subobject ${s \colon S \rightarrowtail I}$ of the monoidal unit such that $s \otimes S \colon S \otimes S \to S \otimes I$ is invertible.
 We write $\ISub(\cat{C})$ for the class of subunits in $\cat{C}$.
\end{definition}

Note that for a subunit $s \colon S \rightarrowtail I$, the composite $\rho \circ (s \otimes S) \colon S \otimes S \to S$ is also an isomorphism, so $S \cong S \otimes S$. Furthermore since $s$ is monic and $s \otimes S$ is invertible, then $S \otimes s$ is also invertible. The following is a very useful property of subunits. 

\begin{lemma}\label{lem:SsissS} 
 If $s \colon S \rightarrowtail I$ is a subunit then $\lambda \circ (s \otimes S) = \rho \circ (S \otimes s)$. 
\end{lemma}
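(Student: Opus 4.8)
The plan is to collapse both sides onto the monoidal coherence identity $\lambda_I = \rho_I$ by post-composing with the monomorphism $s$; in fact the argument will only use that $s$ is monic, not the full subunit condition.

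First, note that $\lambda \circ (s \otimes S)$ and $\rho \circ (S \otimes s)$ are both morphisms $S \otimes S \to S$, so since $s \colon S \rightarrowtail I$ is a monomorphism it suffices to prove $s \circ \lambda \circ (s \otimes S) = s \circ \rho \circ (S \otimes s)$. Then I would push $s$ through the coherence isomorphisms using naturality: naturality of $\lambda$ at the morphism $s \colon S \to I$ gives $s \circ \lambda = \lambda_I \circ (I \otimes s)$, and naturality of $\rho$ at $s$ gives $s \circ \rho = \rho_I \circ (s \otimes I)$. Functoriality of $\otimes$ then collapses the remaining composites, since $(I \otimes s) \circ (s \otimes S) = s \otimes s = (s \otimes I) \circ (S \otimes s)$, so that the chain reads
\[
s \circ \lambda \circ (s \otimes S) = \lambda_I \circ (s \otimes s), \qquad s \circ \rho \circ (S \otimes s) = \rho_I \circ (s \otimes s).
\]
Finally, invoking the standard monoidal-coherence fact that $\lambda_I = \rho_I \colon I \otimes I \to I$ makes the two right-hand sides equal, and cancelling the monic $s$ yields $\lambda \circ (s \otimes S) = \rho \circ (S \otimes s)$.

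There is no real obstacle here: the only points needing care are getting the orientation of the two naturality squares right and recalling the identity $\lambda_I = \rho_I$. It is worth remarking that the subunit hypothesis is not actually used beyond $s$ being a monomorphism. The same reasoning can alternatively be presented in the graphical calculus: capping the output wire of each diagram with $s$ turns both $\lambda \circ (s \otimes S)$ and $\rho \circ (S \otimes s)$ into the picture of $s$ applied to two parallel strands, which is manifestly symmetric.
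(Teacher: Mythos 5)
Your proof is correct and is essentially the same argument as the paper's: post-compose with the monic $s$, use naturality of $\lambda$ and $\rho$ together with functoriality of $\otimes$ to reduce both sides to $\lambda_I \circ (s \otimes s)$ and $\rho_I \circ (s \otimes s)$, and conclude by the coherence identity $\lambda_I = \rho_I$. Your observation that only monicity of $s$ is needed, not the full subunit condition, is accurate.
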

\begin{proof} 
 Note that we have the following equality: 
 \begin{align*}
  s \circ \lambda_S \circ (s \otimes S) 
  &= \lambda_I \circ (I \otimes s) \circ (s \otimes S) & \text{(naturality of $\lambda$)} \\
  &= \lambda_I \circ (s \otimes s) \\
  &= \rho_I \circ (s \otimes s) & \text{(coherence)} \\
  &= \rho_I \circ (s \otimes I) \circ (S \otimes s) \\
  &= s \circ \rho_S \circ (S \otimes s) & \text{(naturality of $\rho$)} 
 \end{align*}
 Since $s$ is monic, it follows that $\lambda \circ (s \otimes S) = \rho \circ (S \otimes s)$.
\end{proof}
 
Subunits behave smoothly in a \emph{firm} monoidal category. The following definition means that the subunits are closed under tensor product. It implicitly uses the fact that a subunit $t \colon T \rightarrowtail I$ is completely determined by its domain $T$~\cite[Lemma~2.4]{enriquemolinerheunentull:tensortopology}.

\begin{definition}\cite[Definition 2.5]{enriquemolinerheunentull:tensortopology}
 A braided monoidal category $\cat{C}$ is \emph{firm} when $s \otimes T \colon S \otimes T \to I \otimes T$ is a monomorphism for any subunits $s \colon S \rightarrowtail I$ and $t \colon T \rightarrowtail I$.
\end{definition}

The subunits in a firm monoidal category form a semilattice; as in~\cite{enriquemolinerheunentull:tensortopology}, we will ignore the fact that the subunits may form a proper class rather than a set.

\begin{proposition}\cite[Proposition 2.9]{enriquemolinerheunentull:tensortopology} 
 If $\cat{C}$ is a firm monoidal category, then $\ISub(\cat{C})$ is a semilattice where the meet $\wedge$ of subunits $s \colon S \rightarrowtail I$ and $t \colon T \rightarrowtail I$ is $\left( s \colon S \rightarrowtail I \right) \wedge \left( t \colon T \rightarrowtail I \right) 
  = 
  \left( s \wedge t \colon S \otimes T \rightarrowtail I \right)$ 
 where $s \wedge t = \lambda \circ (s \otimes t)$, and with top element the (equivalence class of the identity on) the monoidal unit $1 \colon I \to I$. The induced order $\leq$ on $\ISub(\cat{C})$ is precisely the order of subunits: $s \leq t$ if and only if $s = t \circ m$ for some $m \colon S \to T$. 
\end{proposition}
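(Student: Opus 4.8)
The plan is to verify, in order, that $s\wedge t := \lambda\circ(s\otimes t)$ is a monomorphism (hence represents a subobject of $I$), that this subobject is again a subunit, that it is the greatest lower bound of $s$ and $t$ in the poset of subunits ordered by the subobject order, and then to read off the semilattice structure, the top element, and the description of the induced order by formal reasoning. To see that $s\wedge t$ is monic, factor $s\otimes t = (s\otimes I)\circ(S\otimes t)$ and apply naturality of $\rho$ together with the coherence identity $\lambda_I=\rho_I$; this rewrites $\lambda\circ(s\otimes t)$ as $s\circ\rho_S\circ(S\otimes t)$, a composite of the monomorphism $s$, the isomorphism $\rho_S$, and the morphism $S\otimes t$, which is monic by firmness. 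The symmetric factorisation $s\otimes t = (I\otimes t)\circ(s\otimes T)$ yields $\lambda\circ(s\otimes t) = t\circ\lambda_T\circ(s\otimes T)$, an identity I will reuse.

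Next, to see that $s\wedge t$ is a subunit, I must show that $(s\wedge t)\otimes(S\otimes T)$ is invertible; after cancelling the invertible factor $\lambda_I\otimes(S\otimes T)$ it suffices to invert $(s\otimes t)\otimes(S\otimes T)$. Using the associator and --- crucially --- the braiding to rearrange the two copies of $S\otimes T$ into $(S\otimes S)\otimes(T\otimes T)$, naturality of $\alpha$ and $\sigma$ identifies this morphism, up to coherence isomorphisms, with $(s\otimes S)\otimes(t\otimes T)$; both tensor factors here are invertible by the subunit hypothesis (recall that $S\otimes s$ is invertible whenever $s\otimes S$ is), hence so is the morphism. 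I expect this to be the main obstacle: it is conceptually routine but requires care to route each strand through the correct string of associators and braidings.

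For the meet property, the two rewritings above exhibit $s\wedge t = s\circ\big(\rho_S\circ(S\otimes t)\big)$ and $s\wedge t = t\circ\big(\lambda_T\circ(s\otimes T)\big)$, so $s\wedge t\leq s$ and $s\wedge t\leq t$. Conversely, suppose $r\colon R\rightarrowtail I$ is a subunit with $r=s\circ a$ and $r=t\circ b$. Because $r$ is a subunit, $R\otimes r$ is invertible, so $\epsilon := \rho_R\circ(R\otimes r)\colon R\otimes R\to R$ is an isomorphism, and the computation of the first step applied to $r$ gives $\lambda\circ(r\otimes r) = r\circ\epsilon$. Setting $m := (a\otimes b)\circ\epsilon^{-1}$, functoriality of $\otimes$ gives $(s\wedge t)\circ m = \lambda\circ(r\otimes r)\circ\epsilon^{-1} = r$, so $r\leq s\wedge t$; hence $s\wedge t$ is the greatest lower bound of $s$ and $t$.

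Finally, a binary operation that computes meets in a poset is automatically commutative, associative, and idempotent, so $\ISub(\cat{C})$ equipped with $\wedge$ is a semilattice; the identity $1\colon I\to I$ is a subunit (as $1\otimes I$ is an identity) and the maximum in $\leq$, hence the top element; and the order induced by this semilattice, $s\wedge t = s$, coincides with $\leq$ precisely because $\wedge$ computes meets --- and $\leq$ unwinds to the stated condition $s = t\circ m$. Well-definedness of $\wedge$ on equivalence classes of monomorphisms follows from functoriality of $\otimes$.
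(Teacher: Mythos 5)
Your argument is correct. Note, though, that this paper does not prove the statement at all: it is imported by citation as \cite[Proposition 2.9]{enriquemolinerheunentull:tensortopology}, so there is no in-paper proof to compare against; your write-up is essentially a sound reconstruction of the standard argument from that reference. All the delicate points are handled: monicity of $s\wedge t$ via $s\wedge t = s\circ\rho_S\circ(S\otimes t)$ (where $S\otimes t$ is monic by firmness conjugated with the braiding, which is available since firm categories are assumed braided), the subunit property of $s\wedge t$ via the middle-four interchange reducing $(s\otimes t)\otimes(S\otimes T)$ to $(s\otimes S)\otimes(t\otimes T)$, the greatest-lower-bound property using $m=(a\otimes b)\circ\epsilon^{-1}$, and well-definedness on equivalence classes of monomorphisms.
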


To ease definitions and proofs, we will freely use the graphical calculus for monoidal categories. (We invite readers unfamiliar with string diagrams to see~\cite{heunenvicary:cqm} for an introduction). String diagrams in this paper are read from bottom to top. 

Subunits are determined by their domain: if $s,s' \colon S \rightarrowtail I$ are subunits, then they represent the same subobject.
Therefore we may draw a subunit $S \rightarrowtail I$ as:
\[\begin{pic}
 \node[dot] (d) {};
 \draw (d) to +(0,-.5) node[below]{$S$};
\end{pic}\]
In particular, the equality of Lemma \ref{lem:SsissS} is drawn as: 
  \begin{equation}\label{eq:SsissS}
  \begin{pic}
   \node[dot] (d) at (0,0) {};
   \draw (d) to (0,-.5) node[below]{$S$};
   \draw (.3,-.5) node[below]{$S$} to (.3,.5);
      \end{pic}
  =
   \begin{pic}
    \node[dot] (d) at (0,0) {};
    \draw (d) to (0,-.5) node[below]{$S$};
    \draw (-.3,-.5) node[below]{$S$} to (-.3,.5);
   \end{pic}
  \end{equation}
We draw the inverse of $\lambda \circ (s \otimes S) = \rho \circ (S \otimes s)$ as:
\[\begin{pic}
 \node[dot] (d) at (0,0) {};
 \draw (d) to +(0,-.4) node[below]{$S$};
 \draw (d) to[out=0,in=-90] +(.3,.4) node[above]{$S$};
 \draw (d) to[out=180,in=-90] +(-.3,.4) node[above]{$S$};
\end{pic}\]
and so in particular: 
\[\begin{pic}
 \node[dot] (d) at (0,0) {};
 \node[dot] (d) at (.3,-.4) {};
 \draw (d) to +(0,-.4) node[below]{$S$};
 \draw (d) to[out=0,in=-90] +(.3,.5) node[above]{$S$};
 \draw (d) to[out=180,in=-90] +(-.3,.3) node[above]{};
\end{pic} =   \begin{pic}
   \draw (.3,-.5) node[below]{$S$} to (.3,.5) node[above]{$S$};
      \end{pic} = \begin{pic}
 \node[dot] (d) at (0,0) {};
 \node[dot] (d) at (-.3,-.4) {};
 \draw (d) to +(0,-.4) node[below]{$S$};
 \draw (d) to[out=180,in=-90] +(-.3,.5) node[above]{$S$};
 \draw (d) to[out=0,in=-90] +(.3,.3) node[above]{};
\end{pic} \]

We conclude this section with some running examples of firm monoidal categories and their subunits; for more details and other examples see~\cite[Section 3]{enriquemolinerheunentull:tensortopology}. 

\begin{example} Let $(L, \wedge, 1)$ be a semilattice. Then $L$ can be regarded as a category whose objects are the elements $x \in L$ and where there is a unique $x \to y$ if $x \leq y$. Furthermore, $L$ is a firm monoidal category with $x \otimes y = x \wedge y$ and unit $1$. Every element of $L$ is a subunit, and therefore $\ISub(L) = L$. 
\end{example}

\begin{example} Let $X$ be a locale, with frame of opens $\mathcal{O}(X)$ and top element $\top$, and let $\mathrm{Sh}(X)$ be the category of sheaves over $X$. For every open $U \in \cO(X)$, define the sheaf $\chi_U \colon \cO(X)\op \to \cat{Set}$ as follows: 
 \[
  \chi_U(V) = \begin{cases} 
   \lbrace \ast \rbrace & \text{ if } U \geq V \\
   \emptyset & \text{ if } U \not\geq V
  \end{cases}
 \]
Now $\mathrm{Sh}(X)$ is a firm monoidal category where the monoidal structure is given by finite products, so the monoidal product is given by the pointwise cartesian product $\otimes = \times$ and the monoidal unit is the terminal sheaf $I = \chi_\top$. The subunits of $\mathrm{Sh}(X)$ are given by the opens of $X$, that is, they are precisely the subterminal sheaves $\chi_U$, so $\ISub(\mathrm{Sh}(X)) \cong \mathcal{O}(X)$. 
\end{example}

\begin{example} Let $R$ be a commutative unital ring and $\cat{Mod}_R$ be its category of $R$-modules and $R$-linear morphisms between them. $\cat{Mod}_R$ is a firm monoidal category where the monoidal structure is given by the standard tensor product of $R$-modules and so the monoidal unit is the ring itself $I = R$. The subunits of $\cat{Mod}_R$ are the idempotent ideals of $R$, that is, ideals $J \subseteq R$ such that $J = J^2$, where $J^2 = \{ \sum_{i=1}^n r_i r'_i \mid r_i,r_i' \in J \}$. 
\end{example}

\begin{example} 
 Recall that a \emph{Boolean ring} is a commutative unital ring $R$ such that $x^2 = x$ for all $x \in R$. Every ideal of a boolean ring is idempotent. Therefore, for a Boolean ring $R$, the subunits of $\cat{Mod}_R$ correspond precisely to the ideals of $R$. Equivalently, $R$ is a Boolean algebra, and $\ISub(\cat{Mod}_R)$ consists of its order ideals.
\end{example}

\begin{example} 
  The category $\cat{Set}$ of sets and functions is a firm monoidal category under the Cartesian product with monoidal unit a chosen singleton set $I = \{\ast \}$. There are only two subunits in $\cat{Set}$: the empty set $\emptyset$ and the singleton $\{\ast \}$. 
\end{example}

\begin{example}
  Let $R$ be a commutative semiring, and consider the category $\cat{SMod}_R$ of $R$-semimodules~\cite[Chapters~14 and~16]{golan:semirings}. 
  %Then $\cat{SMod}_R$ has a zero object $\mathsf{0}=\{0\}$ and is a symmetric monoidal category such that with $A \otimes \mathsf{0} \simeq \mathsf{0}$ for any $R$-semimodule $A$. 
  There are always two subunits in $\cat{SMod}_R$: $R$ itself, and the zero object $\mathsf{0}=\{0\}$. Similar to the previous example, if these are the only two subunits, then $\cat{SMod}_R$ is a firm monoidal category. This situation includes the case where $R$ is a semifield, such as the category $\cat{Vect}_k$ of vector spaces over a field $k$, and the category $\cat{Rel}$ of sets and relations where the semiring is that of Boolean truth values~\cite[Definition~0.5]{heunenvicary:cqm}.
\end{example}

\section{Restriction Categories from Subunits}\label{sec:Sconstruction}

This section introduce the $\S[-]$-construction, that turns a firm monoidal category $\cat{C}$ into a restriction category $\S[\cat{C}]$ where the restriction structure is determined by the subunits of $\cat{C}$. We will recall various important notions of restriction categories, such as restriction idempotents, restriction-total morphisms, and restriction isomorphisms, and study them in $\S[\cat{C}]$. (For a more in-depth introduction and more details on restriction categories, see~\cite{cockettlack:restrictioncategories}.)

We begin with the $\S[-]$ construction itself.

\begin{definition}
 Let $\cat{C}$ be a firm monoidal category. Define a category $\S[\cat{C}]$ as follows: 
 \begin{itemize}
  \item Objects are the same as in $\cat{C}$;
  \item Morphisms $[s,f] \colon A \to B$ in $\S[\cat{C}]$ are equivalence classes of pairs $(s,f)$ of a subunit $s \colon S \rightarrowtail I$ and a morphism $f \colon A \otimes S \to B$ in $\cat{C}$, where pairs $(s,f)$ and $(s',f')$ are identified when $s'=s \circ m$ for an isomorphism $m \colon S' \to S$ and $f'=f \circ (A \otimes m)$. We will draw morphisms graphically:
  \[ [s,f] = \Big[s,\;\begin{minipic}
    \node[morphism] (f) at (0,0) {$f$};
    \draw ([xshift=-1mm]f.south west) to +(0,-.2) node[right=-1mm,font=\tiny]{$A$};
    \draw ([xshift=1mm]f.south east) to +(0,-.2) node[right=-1mm,font=\tiny]{$S$};
    \draw (f.north) to +(0,.15) node[right=-1mm,font=\tiny]{$B$};
   \end{minipic}\Big] \]
  \item Identity morphisms are $[\id[I], \rho] = [\id[I],\begin{minipic}[baseline={(0,.3)}]
   \draw (0,.4) to (0,0) node[right=-1mm,font=\tiny]{$A$};
  \end{minipic}] \colon A \to A$
  \item Composition of $\Big[s,\;\begin{minipic}
    \node[morphism] (f) at (0,0) {$f$};
    \draw ([xshift=-1mm]f.south west) to +(0,-.2) node[right=-1mm,font=\tiny]{$A$};
    \draw ([xshift=1mm]f.south east) to +(0,-.2) node[right=-1mm,font=\tiny]{$S$};
    \draw (f.north) to +(0,.15) node[right=-1mm,font=\tiny]{$B$};
   \end{minipic}\!\Big]$ and $ \Big[t,\;\begin{minipic}
    \node[morphism] (f) at (0,0) {$g$};
    \draw ([xshift=-1mm]f.south west) to +(0,-.2) node[right=-1mm,font=\tiny]{$B$};
    \draw ([xshift=1mm]f.south east) to +(0,-.2) node[right=-1mm,font=\tiny]{$T$};
    \draw (f.north) to +(0,.15) node[right=-1mm,font=\tiny]{$C$};
   \end{minipic}\!\Big]$ is defined as follows: 
  \[    
   \Big[t,\;\begin{minipic}
    \node[morphism] (f) at (0,0) {$g$};
    \draw ([xshift=-1mm]f.south west) to +(0,-.2) node[right=-1mm,font=\tiny]{$B$};
    \draw ([xshift=1mm]f.south east) to +(0,-.2) node[right=-1mm,font=\tiny]{$T$};
    \draw (f.north) to +(0,.15) node[right=-1mm,font=\tiny]{$C$};
   \end{minipic}\!\Big]
   \circ \Big[s,\;\begin{minipic}
    \node[morphism] (f) at (0,0) {$f$};
    \draw ([xshift=-1mm]f.south west) to +(0,-.2) node[right=-1mm,font=\tiny]{$A$};
    \draw ([xshift=1mm]f.south east) to +(0,-.2) node[right=-1mm,font=\tiny]{$S$};
    \draw (f.north) to +(0,.15) node[right=-1mm,font=\tiny]{$B$};
   \end{minipic}\!\Big] = 
   \Bigg[s \wedge t,\;\;\begin{minipic}
    \node[morphism,width=5mm] (f) at (0,0) {$f$};
    \node[morphism,width=5mm,anchor=south west] (g) at ([xshift=1mm,yshift=2mm]f.north) {$g$};
    \draw (f.north) to ([xshift=-1mm]g.south west);
    \draw (f.south west) to +(0,-.3) node[right=-1mm,font=\tiny]{$A$};
    \draw (f.south east) to +(0,-.3) node[right=-1mm,font=\tiny]{$S$};
    \draw ([xshift=1mm]g.south east) to +(0,-.8) node[right=-1mm,font=\tiny]{$T$};
    \draw (g.north) to +(0,.2) node[right=-1mm,font=\tiny]{$C$};
   \end{minipic}\Bigg]
  \]
 \end{itemize}
\end{definition}

It is straightforward to check that $\S[\cat{C}]$ is indeed a well-defined category when $\cat{C}$ is a firm monoidal category.
Our next objective is to explain how $\S[\cat{C}]$ is in fact a restriction category in such a way that the restriction of $\Big[s,\;\smash{\begin{minipic}
    \node[morphism] (f) at (0,0) {$f$};
    \draw ([xshift=-1mm]f.south west) to +(0,-.2) node[right=-1mm,font=\tiny]{$A$};
    \draw ([xshift=1mm]f.south east) to +(0,-.2) node[right=-1mm,font=\tiny]{$S$};
    \draw (f.north) to +(0,.15) node[right=-1mm,font=\tiny]{$B$};
   \end{minipic}}\!\Big]$ is determined by the subunit $s$. First recall the definition of a restriction category. 

\begin{definition}\cite[Section 2.1.1]{cockettlack:restrictioncategories}
 A \emph{restriction category} is a category $\cat{X}$ equipped with a choice of endomorphism $\rest{f} \colon A \to A$ for each morphism $f \colon A \to B$ satisfying:
\begin{align}  
 f \circ \rest{f} & = f \tag{R1} \\
 \rest{f} \circ \rest{g} & = \rest{g} \circ \rest{f} &&\text{if $\dom(f)=\dom(g)$} \tag{R2} \\
 \rest{g \circ \rest{f}} & = \rest{g} \circ \rest{f} &&\text{if $\dom(f)=\dom(g)$} \tag{R3} \\
 \rest{g} \circ f & = f \circ \rest{g \circ f} &&\text{if $\cod(f)=\dom(g)$} \tag{R4}
\end{align}
 We call $\rest{f}$ the \emph{restriction} of $f$.
\end{definition}

\begin{example} The canonical example of a restriction category is $\cat{Par}$, the category of sets and partial functions. The restriction $\rest{f}\colon X \to X$ of a partial function $f \colon X \to Y$ is
 \begin{align*}
\rest{f}(x) = \begin{cases} 
x & \text{ if $f(x)$ is defined} \\
 \uparrow & \text{ otherwise } 
  \end{cases}
 \end{align*}
 where $\uparrow$ means ``undefined''. 
\end{example}

For more examples of restriction categories, see~\cite[Section 2.1.3]{cockettlack:restrictioncategories}. We move to the restriction structure of $\S[\cat{C}]$.

\begin{proposition}\label{prop:SCrestriction}
 If $\cat{C}$ is a firm monoidal category, then $\S[\cat{C}]$ is a restriction category with:
 \[
   \rest{ \Big[s,\;\begin{minipic}
    \node[morphism] (f) at (0,0) {$f$};
    \draw ([xshift=-1mm]f.south west) to +(0,-.2) node[right=-1mm,font=\tiny]{$A$};
    \draw ([xshift=1mm]f.south east) to +(0,-.2) node[right=-1mm,font=\tiny]{$S$};
    \draw (f.north) to +(0,.15) node[right=-1mm,font=\tiny]{$B$};
   \end{minipic}\!\Big]} = 
  \Big[s,\,\begin{minipic}
   \draw (0,0) to (0,-.5) node[right=-1mm,font=\tiny]{$A$};
   \draw (.3,-.2) node[dot]{} to (.3,-.5) node[right=-1mm,font=\tiny]{$S$};
  \end{minipic}\!\Big]
 \]
\end{proposition}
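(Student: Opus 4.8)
The plan is to verify the four restriction axioms directly, organised around two reductions and one computational lemma. First I would record composition in $\S[\cat{C}]$ explicitly: the composite of $[s,f]\colon A\to B$ with $[t,g]\colon B\to C$ is $\bigl[\,s\wedge t,\ g\circ(f\otimes 1_T)\circ\alpha_{A,S,T}\,\bigr]$, where $s\wedge t=\lambda_I\circ(s\otimes t)\colon S\otimes T\rightarrowtail I$ is the meet of subunits from \cite[Proposition~2.9]{enriquemolinerheunentull:tensortopology}. Second I would observe that the proposed formula for $\rest{[s,f]}$ does not mention $f$, so it depends only on the subunit $s$ and the object $A$; abbreviate it $r^A_s:=\bigl[s,\ \rho_A\circ(1_A\otimes s)\bigr]\colon A\to A$. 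This is well defined on equivalence classes, since $1_A\otimes m$ carries $\rho_A\circ(1_A\otimes s)$ to $\rho_A\circ(1_A\otimes s')$ whenever $s'=s\circ m$. The payoff of the second reduction is immediate: for composable $[s,f]$ and $[t,g]$ we get $\rest{[t,g]\circ[s,f]}=r^A_{s\wedge t}$ with no further work, because the composite carries the subunit $s\wedge t$.

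The lemma is that $r^A_t\circ r^A_s=r^A_{s\wedge t}$ for all subunits $s,t$. Unwinding the composition formula, the underlying morphism of $r^A_t\circ r^A_s$ is $\rho_A\circ(1_A\otimes t)\circ\bigl((\rho_A\circ(1_A\otimes s))\otimes 1_T\bigr)\circ\alpha_{A,S,T}$, and I would rewrite it — using naturality of $\alpha$, the triangle law, naturality of $\lambda$, and the interchange law — as $\rho_A\circ\bigl(1_A\otimes(\lambda_I\circ(s\otimes t))\bigr)=\rho_A\circ\bigl(1_A\otimes(s\wedge t)\bigr)$; since both sides carry the chosen representative $s\wedge t$ of the meet, they are equal morphisms of $\S[\cat{C}]$. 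Axioms (R2) and (R3) then follow formally: (R2) from $r^A_s\circ r^A_t=r^A_{t\wedge s}=r^A_{s\wedge t}=r^A_t\circ r^A_s$ by the lemma and commutativity of $\wedge$; and (R3) from $\rest{[t,g]\circ r^A_s}=r^A_{s\wedge t}=r^A_t\circ r^A_s$, where the first equality is the second reduction and the second is the lemma.

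What remains is (R1), namely $[s,f]\circ r^A_s=[s,f]$, and the instance of (R4) that survives the reductions, namely $r^B_t\circ[s,f]=[s,f]\circ r^A_{s\wedge t}$ (which yields (R4), since $\rest{[t,g]\circ[s,f]}=r^A_{s\wedge t}$). For (R1) the composite is $\bigl[\,s\wedge s,\ f\circ((\rho_A\circ(1_A\otimes s))\otimes 1_S)\circ\alpha_{A,S,S}\,\bigr]$, and I would exhibit the isomorphism $m:=\lambda_S\circ(s\otimes 1_S)=\rho_S\circ(1_S\otimes s)\colon S\otimes S\to S$ (invertible by the subunit condition, the two expressions equal by Lemma~\ref{lem:SsissS}), checking $s\circ m=s\wedge s$ (naturality of $\lambda$ and interchange) and $f\circ(1_A\otimes m)=f\circ((\rho_A\circ(1_A\otimes s))\otimes 1_S)\circ\alpha_{A,S,S}$ (naturality of $\alpha$ and the triangle law). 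For the (R4) instance, the left-hand side simplifies to $\bigl[\,s\wedge t,\ \rho_B\circ(f\otimes t)\circ\alpha_{A,S,T}\,\bigr]$ by interchange, exactly as in the lemma, while the right-hand side carries the subunit $(s\wedge t)\wedge s\colon(S\otimes T)\otimes S\rightarrowtail I$ with underlying morphism $f\circ\bigl((\rho_A\circ(1_A\otimes(s\wedge t)))\otimes 1_S\bigr)\circ\alpha_{A,S\otimes T,S}$. Since $(s\wedge t)\wedge s=s\wedge t$ as subobjects of $I$, I would build the comparison isomorphism $(S\otimes T)\otimes S\cong S\otimes T$ by braiding the two copies of $S$ adjacent and collapsing them along the isomorphism $S\otimes S\cong S$, and check that it intertwines the two underlying morphisms. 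That last check — pushing the spurious copy of $S$ through $f$, where Lemma~\ref{lem:SsissS} and the displayed ``inverse dot'' identities of Section~\ref{sec:subunits} do essential work — is the step I expect to be the main obstacle; everything else is coherence bookkeeping together with the semilattice structure of $\ISub(\cat{C})$. All four axioms can equally be checked diagrammatically, where they become brief manipulations of the subunit dot.
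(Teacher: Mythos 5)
Your proposal is correct and takes essentially the same route as the paper: both verify (R1)--(R4) directly, reducing everything to the semilattice structure of $\ISub(\cat{C})$ together with the collapse $S \otimes S \cong S$ and the interchangeability of the two copies of $S$ from Lemma~\ref{lem:SsissS}, which is exactly the tool the paper invokes (graphically) at the step you flag as the main obstacle. The only differences are presentational: the paper carries out each axiom as a short string-diagram manipulation, whereas you repackage the restriction as $r^A_s$ and prove the single lemma $r^A_t \circ r^A_s = r^A_{s\wedge t}$ once, so that (R2) and (R3) follow formally --- a harmless and slightly tidier organisation of the same argument.
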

\begin{proof}
 We start with verifying (R1):
 \begin{align*}
  \Big[s,\;\begin{minipic}
    \node[morphism] (f) at (0,0) {$f$};
    \draw ([xshift=-1mm]f.south west) to +(0,-.2) node[right=-1mm,font=\tiny]{$A$};
    \draw ([xshift=1mm]f.south east) to +(0,-.2) node[right=-1mm,font=\tiny]{$S$};
    \draw (f.north) to +(0,.15) node[right=-1mm,font=\tiny]{$B$};
   \end{minipic}\!\Big] \circ \rest{\Big[s,\;\begin{minipic}
    \node[morphism] (f) at (0,0) {$f$};
    \draw ([xshift=-1mm]f.south west) to +(0,-.2) node[right=-1mm,font=\tiny]{$A$};
    \draw ([xshift=1mm]f.south east) to +(0,-.2) node[right=-1mm,font=\tiny]{$S$};
    \draw (f.north) to +(0,.15) node[right=-1mm,font=\tiny]{$B$};
   \end{minipic}\!\Big]}
  & = \Big[s,\;\begin{minipic}
   \node[morphism] (f) at (0,0) {$f$};
   \draw ([xshift=-1mm]f.south west) to +(0,-.2) node[right=-1mm,font=\tiny]{$A$};
   \draw ([xshift=1mm]f.south east) to +(0,-.2) node[right=-1mm,font=\tiny]{$S$};
   \draw (f.north) to +(0,.2) node[right=-1mm,font=\tiny]{$B$};
  \end{minipic}\!\Big] \circ \Big[s,\,\begin{minipic}[baseline={(0,.3)}]
   \draw (0,.4) to (0,-.2) node[right=-1mm,font=\tiny]{$A$};
   \draw (.3,-.2) node[right=-1mm,font=\tiny]{$S$} to (.3,.2) node[dot]{};
  \end{minipic}\!\Big] 
  \\
  & = \Big[s \wedge s,\;\begin{minipic}
   \node[morphism] (f) at (0,0) {$f$};
   \draw ([xshift=-1mm]f.south west) to +(0,-.2) node[right=-1mm,font=\tiny]{$A$};
   \draw ([xshift=1mm]f.south east) to +(0,-.2) node[right=-1mm,font=\tiny]{$S$};
   \draw (f.north) to +(0,.2) node[right=-1mm,font=\tiny]{$B$};
   \draw (.5,-.35) node[right=-1mm,font=\tiny]{$S$} to (.5,0) node[dot]{};
  \end{minipic}\!\Big] 
  = \bigg[s,\;\begin{minipic}
   \node[morphism] (f) at (0,0) {$f$};
   \draw ([xshift=-1mm]f.south west) to +(0,-.4) node[right=-1mm,font=\tiny]{$A$};
   \draw (f.north) to +(0,.2) node[right=-1mm,font=\tiny]{$B$};
   \node[dot] (d) at (.4,-.35) {};
   \draw ([xshift=1mm]f.south east) to[out=-90,in=180] (d);
   \draw (.6,0) node[dot]{} to[out=-90,in=0,looseness=.8] (d);
   \draw (d) to +(0,-.2) node[right=-1mm,font=\tiny]{$S$};
  \end{minipic}\bigg] 
  = \Big[s,\;\begin{minipic}
    \node[morphism] (f) at (0,0) {$f$};
    \draw ([xshift=-1mm]f.south west) to +(0,-.2) node[right=-1mm,font=\tiny]{$A$};
    \draw ([xshift=1mm]f.south east) to +(0,-.2) node[right=-1mm,font=\tiny]{$S$};
    \draw (f.north) to +(0,.15) node[right=-1mm,font=\tiny]{$B$};
   \end{minipic}\!\Big]
 \end{align*}
 Next, (R2):
 \[
  \rest{\Big[s,\;\begin{minipic}
    \node[morphism] (f) at (0,0) {$f$};
    \draw ([xshift=-1mm]f.south west) to +(0,-.2) node[right=-1mm,font=\tiny]{$A$};
    \draw ([xshift=1mm]f.south east) to +(0,-.2) node[right=-1mm,font=\tiny]{$S$};
    \draw (f.north) to +(0,.15) node[right=-1mm,font=\tiny]{$B$};
   \end{minipic}\!\Big]} \circ \rest{ \Big[t,\;\begin{minipic}
    \node[morphism] (f) at (0,0) {$g$};
    \draw ([xshift=-1mm]f.south west) to +(0,-.2) node[right=-1mm,font=\tiny]{$A$};
    \draw ([xshift=1mm]f.south east) to +(0,-.2) node[right=-1mm,font=\tiny]{$T$};
    \draw (f.north) to +(0,.15) node[right=-1mm,font=\tiny]{$C$};
   \end{minipic}\!\Big]}   
  = \Big[s,\,\begin{minipic}[baseline={(0,.4)}]
   \draw (0,0) node[right=-1mm,font=\tiny]{$A$} to (0,.5);
   \draw (.3,0) node[right=-1mm,font=\tiny]{$S$} to (.3,.3) node[dot]{};
  \end{minipic}\!\Big] \circ \Big[t,\,\begin{minipic}[baseline={(0,.4)}]
   \draw (0,0) node[right=-1mm,font=\tiny]{$A$} to (0,.5);
   \draw (.3,0) node[right=-1mm,font=\tiny]{$T$} to (.3,.3) node[dot]{};
  \end{minipic}\!\Big]
  = \Big[s \wedge t,\,\begin{minipic}[baseline={(0,.4)}]
   \draw (0,0) node[right=-1mm,font=\tiny]{$A$} to (0,.5);
   \draw (.3,0) node[right=-1mm,font=\tiny]{$S$} to (.3,.3) node[dot]{};
   \draw (.6,0) node[right=-1mm,font=\tiny]{$T$} to (.6,.3) node[dot]{};
  \end{minipic}\!\Big]
  = \rest{ \Big[t,\;\begin{minipic}
    \node[morphism] (f) at (0,0) {$g$};
    \draw ([xshift=-1mm]f.south west) to +(0,-.2) node[right=-1mm,font=\tiny]{$A$};
    \draw ([xshift=1mm]f.south east) to +(0,-.2) node[right=-1mm,font=\tiny]{$T$};
    \draw (f.north) to +(0,.15) node[right=-1mm,font=\tiny]{$C$};
   \end{minipic}\!\Big]} \circ \rest{\Big[s,\;\begin{minipic}
    \node[morphism] (f) at (0,0) {$f$};
    \draw ([xshift=-1mm]f.south west) to +(0,-.2) node[right=-1mm,font=\tiny]{$A$};
    \draw ([xshift=1mm]f.south east) to +(0,-.2) node[right=-1mm,font=\tiny]{$S$};
    \draw (f.north) to +(0,.15) node[right=-1mm,font=\tiny]{$B$};
   \end{minipic}\!\Big]}   
 \]
 For (R3):
 \begin{align*}
  \rest{ \Big[t,\;\begin{minipic}
    \node[morphism] (f) at (0,0) {$g$};
    \draw ([xshift=-1mm]f.south west) to +(0,-.2) node[right=-1mm,font=\tiny]{$A$};
    \draw ([xshift=1mm]f.south east) to +(0,-.2) node[right=-1mm,font=\tiny]{$T$};
    \draw (f.north) to +(0,.15) node[right=-1mm,font=\tiny]{$C$};
   \end{minipic}\!\Big] \circ \rest{\Big[s,\;\begin{minipic}
    \node[morphism] (f) at (0,0) {$f$};
    \draw ([xshift=-1mm]f.south west) to +(0,-.2) node[right=-1mm,font=\tiny]{$A$};
    \draw ([xshift=1mm]f.south east) to +(0,-.2) node[right=-1mm,font=\tiny]{$S$};
    \draw (f.north) to +(0,.15) node[right=-1mm,font=\tiny]{$B$};
   \end{minipic}\!\Big]}}
  &= \rest{
   \Big[t,\;\begin{minipic}
    \node[morphism] (f) at (0,0) {$g$};
    \draw ([xshift=-1mm]f.south west) to +(0,-.2) node[right=-1mm,font=\tiny]{$A$};
    \draw ([xshift=1mm]f.south east) to +(0,-.2) node[right=-1mm,font=\tiny]{$T$};
    \draw (f.north) to +(0,.15) node[right=-1mm,font=\tiny]{$C$};
   \end{minipic}\!\Big]
   \circ
   \Big[s,\,\begin{minipic}[baseline={(0,.5)}]
    \draw (0,0) node[right=-1mm,font=\tiny]{$A$} to (0,.5);
    \draw (.3,0) node[right=-1mm,font=\tiny]{$S$} to (.3,.3) node[dot]{};
   \end{minipic}\!\Big]
  } \\
  &= \rest{\Big[t \wedge s,\;\begin{minipic}
    \node[morphism] (f) at (0,0) {$g$};
    \draw ([xshift=-1mm]f.south west) to +(0,-.2) node[right=-1mm,font=\tiny]{$A$};
    \draw ([xshift=1mm]f.south east) to +(0,-.2) node[right=-1mm,font=\tiny]{$T$};
    \draw (f.north) to +(0,.15) node[right=-1mm,font=\tiny]{$C$};
    \draw (.5,-.35) node[right=-1mm,font=\tiny]{$S$} to (.5,0) node[dot]{};
   \end{minipic}\!\Big]} \\
 & = \Big[s \wedge t,\,\begin{minipic}
   \draw (0,0) node[right=-1mm,font=\tiny]{$A$} to (0,.5);
   \draw (.3,0) node[right=-1mm,font=\tiny]{$S$} to (.3,.3) node[dot]{};
   \draw (.6,0) node[right=-1mm,font=\tiny]{$T$} to (.6,.3) node[dot]{};
  \end{minipic}\!\!\Big] \\
 &  = \rest{ \Big[t,\;\begin{minipic}[baseline={(0,.5)}]
    \node[morphism] (f) at (0,0) {$g$};
    \draw ([xshift=-1mm]f.south west) to +(0,-.2) node[right=-1mm,font=\tiny]{$A$};
    \draw ([xshift=1mm]f.south east) to +(0,-.2) node[right=-1mm,font=\tiny]{$T$};
    \draw (f.north) to +(0,.15) node[right=-1mm,font=\tiny]{$C$};
   \end{minipic}\!\Big]} \circ \rest{\Big[s,\;\begin{minipic}
    \node[morphism] (f) at (0,0) {$f$};
    \draw ([xshift=-1mm]f.south west) to +(0,-.2) node[right=-1mm,font=\tiny]{$A$};
    \draw ([xshift=1mm]f.south east) to +(0,-.2) node[right=-1mm,font=\tiny]{$S$};
    \draw (f.north) to +(0,.15) node[right=-1mm,font=\tiny]{$B$};
   \end{minipic}\!\Big]}
 \end{align*}
 Finally, (R4):
 \begin{align*}
  \Big[s,\;\begin{minipic}
    \node[morphism] (f) at (0,0) {$f$};
    \draw ([xshift=-1mm]f.south west) to +(0,-.2) node[right=-1mm,font=\tiny]{$A$};
    \draw ([xshift=1mm]f.south east) to +(0,-.2) node[right=-1mm,font=\tiny]{$S$};
    \draw (f.north) to +(0,.15) node[right=-1mm,font=\tiny]{$B$};
   \end{minipic}\!\Big] \circ \rest{ \Big[t,\;\begin{minipic}
    \node[morphism] (f) at (0,0) {$g$};
    \draw ([xshift=-1mm]f.south west) to +(0,-.2) node[right=-1mm,font=\tiny]{$B$};
    \draw ([xshift=1mm]f.south east) to +(0,-.2) node[right=-1mm,font=\tiny]{$T$};
    \draw (f.north) to +(0,.15) node[right=-1mm,font=\tiny]{$C$};
   \end{minipic}\!\Big] \circ \Big[s,\;\begin{minipic}
    \node[morphism] (f) at (0,0) {$f$};
    \draw ([xshift=-1mm]f.south west) to +(0,-.2) node[right=-1mm,font=\tiny]{$A$};
    \draw ([xshift=1mm]f.south east) to +(0,-.2) node[right=-1mm,font=\tiny]{$S$};
    \draw (f.north) to +(0,.15) node[right=-1mm,font=\tiny]{$B$};
   \end{minipic}\!\Big]}
  & = \Big[s,\;\begin{minipic}
    \node[morphism] (f) at (0,0) {$f$};
    \draw ([xshift=-1mm]f.south west) to +(0,-.2) node[right=-1mm,font=\tiny]{$A$};
    \draw ([xshift=1mm]f.south east) to +(0,-.2) node[right=-1mm,font=\tiny]{$S$};
    \draw (f.north) to +(0,.15) node[right=-1mm,font=\tiny]{$B$};
   \end{minipic}\!\Big] \circ \rest{\Bigg[s \wedge t,\,\begin{minipic}
    \node[morphism,width=4mm] (f) at (0,0) {$f$};
    \node[morphism,width=4mm,anchor=south west] (g) at ([xshift=1mm,yshift=1.5mm]f.north) {$g$};
    \draw (f.north) to ([xshift=-1mm]g.south west);
    \draw (f.south west) to +(0,-.2) node[right=-1mm,font=\tiny]{$A$};
    \draw (f.south east) to +(0,-.2) node[right=-1mm,font=\tiny]{$S$};
    \draw ([xshift=1mm]g.south east) to +(0,-.65) node[right=-1mm,font=\tiny]{$T$};
    \draw (g.north) to +(0,.15);
   \end{minipic}\Bigg]} \\
  & = \Big[s,\;\begin{minipic}
    \node[morphism] (f) at (0,0) {$f$};
    \draw ([xshift=-1mm]f.south west) to +(0,-.2) node[right=-1mm,font=\tiny]{$A$};
    \draw ([xshift=1mm]f.south east) to +(0,-.2) node[right=-1mm,font=\tiny]{$S$};
    \draw (f.north) to +(0,.15);
   \end{minipic}\!\Big] \circ \Big[s \wedge t,\,\begin{minipic}[baseline={(0,.4)}]
    \draw (0,0) node[right=-1mm,font=\tiny]{$A$} to (0,.5);
    \draw (.3,0) node[right=-1mm,font=\tiny]{$S$} to (.3,.3) node[dot]{};
    \draw (.6,0) node[right=-1mm,font=\tiny]{$T$} to (.6,.3) node[dot]{};
   \end{minipic}\!\Big] \\
  & = \Big[s \wedge s \wedge t,\;\,\begin{minipic}
    \node[morphism] (f) at (0,0) {$f$};
    \draw ([xshift=-1mm]f.south west) to +(0,-.2) node[right=-1mm,font=\tiny]{$A$};
    \draw ([xshift=1mm]f.south east) to +(0,-.2) node[right=-1mm,font=\tiny]{$S$};
    \draw (f.north) to +(0,.15);
    \draw (.5,-.35) node[right=-1mm,font=\tiny]{$S$} to (.5,0) node[dot]{};
    \draw (.85,-.35) node[right=-1mm,font=\tiny]{$T$} to (.85,0) node[dot]{};
   \end{minipic}\!\Big] \\
   &= \Big[s \wedge t,\;\,\begin{minipic}
    \node[morphism] (f) at (0,0) {$f$};
    \draw ([xshift=-1mm]f.south west) to +(0,-.2) node[right=-1mm,font=\tiny]{$A$};
    \draw ([xshift=1mm]f.south east) to +(0,-.2) node[right=-1mm,font=\tiny]{$S$};
    \draw (f.north) to +(0,.15);
    \draw (.5,-.35) node[right=-1mm,font=\tiny]{$T$} to (.5,0) node[dot]{};
   \end{minipic}\!\Big] \\
   &= \rest{ \Big[t,\;\begin{minipic}
    \node[morphism] (f) at (0,0) {$g$};
    \draw ([xshift=-1mm]f.south west) to +(0,-.2) node[right=-1mm,font=\tiny]{$B$};
    \draw ([xshift=1mm]f.south east) to +(0,-.2) node[right=-1mm,font=\tiny]{$T$};
    \draw (f.north) to +(0,.15) node[right=-1mm,font=\tiny]{$C$};
   \end{minipic}\!\Big]} \circ \Big[s,\;\begin{minipic}
    \node[morphism] (f) at (0,0) {$f$};
    \draw ([xshift=-1mm]f.south west) to +(0,-.2) node[right=-1mm,font=\tiny]{$A$};
    \draw ([xshift=1mm]f.south east) to +(0,-.2) node[right=-1mm,font=\tiny]{$S$};
    \draw (f.north) to +(0,.15) node[right=-1mm,font=\tiny]{$B$};
   \end{minipic}\!\Big]   
 \end{align*}
 So we conclude that $\S[\cat{C}]$ is a restriction category. 
\end{proof}

We now apply the $\S[-]$ construction to our examples of firm monoidal categories from the previous section and discuss their restriction structure. Most examples are extensions of the following general principle: $\S[\cat{C} \times \cat{D}] \simeq \S[\cat{C}] \times \S[\cat{D}]$ for firm monoidal categories $\cat{C}$ and $\cat{D}$.

\begin{example}\label{ex:semilattices}
 Regard a semilattice $(L, \wedge, 1)$ as a firm monoidal category. Then $\S[L]$ can be described as follows: 
 \begin{itemize}
  \item Objects are elements $x \in L$;
  \item Morphisms $x \to y$ are elements $s \in L$ such that $x \wedge s \leq y$;
  \item Identity morphisms are $1 \colon x \to x$;
  \item Composition of $s \colon x \to y$ and $t \colon y \to z$ is $s \wedge t \colon x \to z$;
  \item Restriction of $s \colon x \to y$ is $s \colon x \to x$.
 \end{itemize}
 This is a typed version of the known construction of \emph{depressing downsets}: the homset $\S[L](b,a)$ is denoted as $a \downarrow b$ in~\cite[Section 2.7]{ballpultrwayland:dedekindmacneille}.
 If $L$ is an implicative semilattice, also called a Heyting semilattice, that is, if $L$ is closed as a monoidal category, then $\S[L](a,b) = \downset (a \multimap b)$.
 Note that $\S[L]$ is enriched over posets.
 In fact, every homset again has finite meets, but may not have a top element. If $L$ is pro-Heyting ~\cite[Definition 3.1]{ballpultrwayland:dedekindmacneille}, that is if $s<t \in L$ implies that the subset $\{r \in L \mid r \wedge t \leq s\}$ equals the set of upper bounds of the set of lower bounds of itself, then $L$ is implicative if and only if every homset has a top element~\cite[Observation 3.2]{ballpultrwayland:dedekindmacneille}. In this sense, the $\S[L]$ construction ``approximates'' the construction of the free Heyting algebra on a semilattice $L$.
\end{example}

\begin{example}\label{ex:sheaves}Let $X$ be a locale. If $F,G \in \mathrm{Sh}(X)$ are sheaves, then a morphism $F \to G$ in $\S[\mathrm{Sh}(X)]$ consists of an open $U \in \cO(X)$ and a natural transformation $\alpha \colon F \times \chi_U \Rightarrow G$. 
Now let $V \in \cO(X)$ be an open such that $V \not\subseteq U$. Then $(F \times \chi_U)(V) = \emptyset$, and so $\alpha_V$ can only be the empty function $\emptyset \to G(V)$.
 In other words, $\alpha$ is completely determined by $\alpha|_U \colon F|_U \Rightarrow G|_U$, where we write $F|_U \colon \cO(U)\op \to \cat{Set}$ for the restriction of the sheaf $F$ to $U$ given by $F|_U(V)=F(V)$ for $U \geq V$.
Therefore, $\S[\mathrm{Sh}(X)]$ can be described as: 
 \begin{itemize}
  \item Objects are sheaves $F \colon \cO(X)\op \to \cat{Set}$;
  \item Morphisms $[U,\alpha]: F \to G$ are pairs of an open $U \in \cO(X)$ and a natural transformation $\alpha \colon F|_U \to G|_U$;
  \item Identity morphisms are the pairs $[X, \id: F \rightarrow F]: F \to F$ (since $F = F|_X$);
  \item Composition of $[U,\alpha]: F \to G$ and $[V,\beta]: G \to H$ is $[V,\beta] \circ [U,\alpha] = [U \wedge V, \beta|_U \circ \alpha|_V]$, where we write $\alpha|_V$ for the restriction of $\alpha \colon F|_U \Rightarrow G|_U$ to $F|_{U \wedge V} \Rightarrow G|_{U \wedge V}$;
  \item Restriction of $[U,\alpha]: F \to G$ is $\rest{[U,\alpha]} = [U,F|_U]$.
 \end{itemize}
 In short: for categories of sheaves $\mathrm{Sh}(X)$, the $\S[-]$-construction makes the morphisms partial in recording an open domain of definition. 
\end{example}

\begin{example} 
 Let $R$ be a unital commutative ring. Then $\S[\cat{Mod}_R]$ can be described as follows:
 \begin{itemize}
  \item Objects are $R$-modules $A$;
  \item Morphisms $[I,f] \colon A \to B$ are pairs of an idempotent ideal $I$ and an $R$-linear map $f \colon A \otimes_R I \to B$;
  \item Identity morphisms are the pairs $[R, A \otimes_R R \simeq A]$;
  \item Composition of $[I,f]$ and $[J,g]$ is $[IJ, g \circ (f \otimes_R J) \colon A \otimes_R I \otimes_R J \to C]$;
  \item Restriction of $[I,f] \colon A \to B$ is $\rest{[I,f]} = [I, \rho \colon A \otimes_R I \to A]$ with $\rho(a \otimes i)=ai$.
 \end{itemize}
 Thus, for categories of modules $\cat{Mod}_R$, the $\S[-]$-construction makes the morphisms partial in recording an ideal in the domain where a morphism acts on. This is closely related to the idea of localisation in algebra.
 For example, if the ring $R$ is semisimple, then any ideal is generated by an idempotent element, so $\ISub(\cat{Mod}_R)$ is the Boolean algebra of idempotent elements. Morphisms in $\S[\cat{Mod}_R]$ are thus pairs of an idempotent $e^2=e \in R$ and a morphism $f \colon eA \to B$.
\end{example}

%\begin{example}
 % If $(X,\Sigma,\mu)$ is a measure space, then the essentially bounded $\Sigma$-measurable functions $X \to \mathbb{R}$ up to $\mu$-almost-everywhere equality form a commutative ring $R$, called the \emph{measure algebra}~\cite{scott:choice}. Its Boolean algebra $A$ of idempotents corresponds to the $\Sigma$-measurable subsets of $X$ up to $\mu$-negligible ones and there is an equivalence between $\cat{Mod}_R$, the category of sheaves over the locale $A$, and the category of $A$-valued sets~\cite{fourmanscott:sheavesandlogic}. 
 % The $\S$-construction governs conditional probabilities in this Boolean-valued model of set theory (i.e.\ Boolean topos): the restriction to a measurable subset $U \subBooleanseteq X$ conditions on the event $U$ being true.
%\end{example}

\begin{example} 
 The previous two examples are related.
 For example, if $R$ is a Boolean ring, then $\cat{Mod}_R$ is equivalent to the category of sheaves of $\mathbb{Z}_2$-vector spaces over the Stone space of $R$ by Pierce's representation theorem~\cite[Chapter V.2]{johnstone:stonespaces}. Ideals of the Boolean ring $R$ correspond to open sets of its Stone space.
\end{example}

\begin{example}\label{ex:sset} 
  Recall that in $\cat{Set}$, the empty set $\emptyset$ is an initial object and so for every set $X$ there is a unique function $\varnothing \colon \emptyset \to X$, called the empty function. On the other hand, the only function whose codomain is $\emptyset$ is the identity function $\id[\emptyset]$. Also $X \times \emptyset = \emptyset$ for any set $X$. So $\S[\cat{Set}]$ is described as follows: 
 \begin{itemize}
  \item Objects are sets $X$;
  \item Morphisms include the usual functions $[1,f] \colon X \to Y$ as well as an extra map $[\emptyset, \varnothing] \colon X \to Y$, so $\S[\cat{Set}](X,Y) \simeq \cat{Set}(X,Y) + 1$;
  \item Identity morphisms are the pairs $[1, X \times \{ \ast \} \simeq X]$;
  \item Composition of $[s,f]$ and $[t,g]$ can be described in the following three cases: 
  if $s=t=1$, then $[1,g] \circ [1,f] = [1, g \circ f]$;
  if $s=\emptyset$, then $f=\varnothing$ and $[t,g] \circ [\emptyset, \varnothing] = [\emptyset, \varnothing_Y]$;
  if $t=\emptyset$, then $g=\varnothing$ and $[\emptyset, \varnothing] \circ [s,f] = [0,0]$. 
  \item Restriction of $[1,f]$ is the identity $\rest{[1,f]} = [1, X \times \{ \ast \} \simeq X]$, while the restriction of $[\emptyset, \varnothing]$ is itself $\rest{[\emptyset, \varnothing]} = [\emptyset, \varnothing]$.
  \end{itemize}
  Recall that a category $\cat{C}$ is said to have \emph{zero morphisms} if there is a family of morphisms $z \colon A \to B$ (for every pair of objects) which are absorbing in the sense that $z \circ f = z$ and $f \circ z = z$. When they exist, zero morphisms are unique. Now $\S[\cat{Set}]$ has zero morphisms $[\emptyset, \varnothing]$. In fact, $\S[\cat{Set}]$ is the free category with zero morphisms over $\cat{Set}$ with respect to the functor $U \colon \cat{Set} \to \S[\cat{Set}]$ defined on objects as $U(A) = A$ and on maps as $U(f) = [1,f]$. So given a category $\cat{C}$ with zero morphisms $z$ and a functor $F \colon \cat{Set} \to \cat{C}$, there exists a unique functor $G \colon \S[\cat{Set}] \to \cat{C}$ which preserves zero morphisms, that is, $G(z) = [\emptyset, \varnothing]$, and satisfying $G \circ U = F$. Explicitly, $G$ is defined on objects as $G(X) = F(X)$ and on morphisms as $G([1,f]) = F(f)$ and $G([\emptyset, \varnothing]) = z$. While $\emptyset$ is an initial object in $\cat{Set}$, $\emptyset$ is no longer initial in $\S[\cat{Set}]$; it is a terminal object instead. 
\end{example}

\begin{example} Let $R$ be a commutative semiring, such that only subunits in $\cat{SMod}_R$ are $R$ and $\mathsf{0}$. Then similar to the previous example, $\S[\cat{SMod}_R]$ can be described as the free category with zero morphisms over $\cat{SMod}_R$ where the zero morphisms are $[\mathsf{0},0]$. However, while $\cat{SMod}_R$ already had zero morphisms $0: A \to B$, the morphisms $[1, 0]: A \to B$ in $\S[\cat{SMod}_R]$ are no longer zero morphisms. Similarly, while $\mathsf{0}$ was a zero object in $\cat{SMod}_R$, this is no longer the case in $\S[\cat{SMod}_R]$; instead $\mathsf{0}$ becomes a restriction-terminal object~\cite[Definition~2.16]{cockettcruttwellgallagher:differentialrestriction}. 
% \begin{itemize}
%  \item Objects are $R$-semimodules $A$;
%  \item Morphisms include the usual $R$-semilinear maps $[1,f] \colon A \to B$ as well as an extra map $[0,0] \colon A \to B$, so $\S[\cat{SMod}_R](A,B) \cong \cat{SMod}_R(A,B) + 1$;
%  \item Identity morphisms are the pairs $[1, A \otimes_R R \simeq A]$;
%  \item Composition of $[s,f]$ and $[t,g]$ can be described in the following three cases: 
%  if $s=t=1$, then $[1,g] \circ [1,f] = [1, g \circ f]$;
%  if $s=0$, then $f=0$ and $[t,g] \circ [0,0] = [0,0]$;
%  if $t=0$, then $g=0$ and $[0,0] \circ [s,f] = [0,0]$. 
%  \item Restriction of $[1,f]$ is the identity $\rest{[1,f]} = [1, A \otimes_R R \simeq A]$, while the restriction of $[0,0]$ is itself $\rest{[0,0]} = [0, 0]$.
% \end{itemize}
%Note that these extra maps are absorbing in the sense that $[s,f] \circ [0,0] = [0,0]$ and $[0,0] \circ [s,f] = [0,0]$, and are therefore zero maps. However, while $0$ was a zero object~\cite[Definition~2.8]{heunenvicary:cqm} in $\cat{SMod}_R$, this is no longer the case in $\S[\cat{SMod}_R]$; instead $0$ becomes a restriction-terminal object~\cite[Definition~2.16]{cockettcruttwellgallagher:differentialrestriction}.
\end{example}

\begin{remark}
  The $\S[-]$-construction resembles the construction of the free restriction category on a fibration of semilattices~\cite{cockettguo:semilatticefibrations}. In our case, the semilattice over each object is the same, namely $\ISub(\cat{C})$. But $\S[\cat{C}]$ does not fit neatly in that framework, for it is not what is called unitary: it is not the case that $[s,f]=[t,g]$ as soon as $s=t$ and $f \otimes r = g \otimes r$ for some $r$.
\end{remark}

We return to studying the restriction structure of the $\S[-]$-construction, by taking a closer look at various classes of maps that are important in restriction category theory.

\begin{definition}\cite[Section 2.1.1]{cockettlack:restrictioncategories}
 A \emph{restriction idempotent} is an endomorphism $e$  in a restriction category with $e = \rest{e}$. Write $\mathcal{O}(A)$ for the set of all restriction idempotents of type $A \to A$. %and for every morphism $f \colon A \to B$, let $\mathcal{O}(f): \mathcal{O}(B) \to \mathcal{O}(A)$ be the function defined as $\mathcal{O}(f)(e)= \rest{e \circ f}$. 
\end{definition}

\begin{example} 
 In $\cat{Par}$, the restriction idempotents of a set $X$ correspond precisely to its subsets $U \subseteq X$. Indeed, for every subset $U \subseteq X$, define the partial function $\chi_U \colon X \to X$ as follows: 
 \begin{align*}
  \chi_U(x) = \begin{cases} 
   x & \text{ if } x \in U \\
   \text{undefined} & \text{ if } x \notin U 
  \end{cases}
   \end{align*}
 Then clearly $\rest{\chi_U} = \chi_U$. Conversly, given a restriction idempotent $e \colon X \to X$, consider the subset $U_e = \{ x \mid e(x)=x \} \subseteq X$. Then $\chi_{U_e} = e$ because $e = \rest{e}$, and so $\cO(X)$ is isomorphic to the powerset $\mathcal{P}(X)$ of $X$.
\end{example}

Note that $\rest{f}$ is a restriction idempotent for any morphism $f$~\cite[Lemma~2.1]{cockettlack:restrictioncategories}. Therefore, $e$ is a restriction idempotent if and only if $e = \rest{f}$ for some morphism $f$. Furthermore, $\mathcal{O}(A)$ is a semilattice where $e \wedge e' = e \circ e^\prime$ and the top element is the identity morphism $1_A \colon A \to A$~\cite[Section 2.1]{cockett:range}. So in particular, $e \leq e'$ if $e \circ e' = e$. In the $\S[-]$-construction, restriction idempotents correspond precisely to the subunits of the base category. 

\begin{proposition}\label{prop:restrictionidempotentsinS} 
 Let $\cat{C}$ be a firm monoidal category. The restriction idempotents in $\S[\cat{C}]$ are precisely the morphisms of the form $\big[s,\,\begin{minipic}
   \draw (0,0) to (0,-.5) node[right=-1mm,font=\tiny]{$A$};
   \draw (.3,-.2) node[dot]{} to (.3,-.5) node[right=-1mm,font=\tiny]{$S$};
  \end{minipic}\!\big]$.
 This gives a semilattice isomorphism $\cO(A) \cong \ISub(\cat{C})$.
\end{proposition}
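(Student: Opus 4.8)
The plan is to prove the two assertions separately: first that the restriction idempotents of $\S[\cat{C}]$ of type $A \to A$ are exactly the morphisms displayed in the statement, and then that the induced assignment to subunits is a semilattice isomorphism. Throughout, fix an object $A$ and write $\pi_s = \rho \circ (1_A \otimes s) \colon A \otimes S \to A$ for the morphism drawn in the statement (the wire $A$ passing through, the wire $S$ ending in a dot). For the characterisation, recall \cite[Lemma~2.1]{cockettlack:restrictioncategories} that $\rest{f}$ is a restriction idempotent for every morphism $f$. By Proposition~\ref{prop:SCrestriction}, the restriction $\rest{[s,f]}$ depends only on the subunit $s$ and equals $[s,\pi_s]$; hence every $[s,\pi_s]$ is a restriction idempotent (indeed it is its own restriction). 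Conversely, if $e \colon A \to A$ is a restriction idempotent then $e = \rest{e}$, and the right-hand side is visibly of the form $[s,\pi_s]$ by Proposition~\ref{prop:SCrestriction}. So $\cO(A) = \{\, [s,\pi_s] \mid s \colon S \rightarrowtail I \text{ a subunit}\,\}$.

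Next I would define $\phi \colon \cO(A) \to \ISub(\cat{C})$ by $\phi([s,\pi_s]) = s$. This is well defined and injective: by the equivalence relation defining morphisms of $\S[\cat{C}]$, we have $[s,\pi_s] = [t,\pi_t]$ precisely when $s = t \circ m$ for an isomorphism $m \colon S \to T$ (the condition $\pi_s = \pi_t \circ (1_A \otimes m)$ being then automatic, since $\pi_t \circ (1_A \otimes m) = \rho \circ (1_A \otimes (t\circ m)) = \rho \circ (1_A \otimes s) = \pi_s$), which is exactly the condition that $s$ and $t$ represent the same subunit. It is surjective since every subunit $s$ yields the idempotent $[s,\pi_s]$. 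Finally I would check $\phi$ is a homomorphism of semilattices, using that $\cO(A)$ has meet given by composition and top element $1_A$: the identity $1_A = [\id[I],\rho]$ is $[\,\id[I],\pi_{\id[I]}\,]$, so it maps to the top subunit $1 \colon I \to I$; and the composite $[t,\pi_t] \circ [s,\pi_s]$ is, by the composition formula, of the form $[s \wedge t, g]$, whose second component $g$ is identified with $\pi_{s \wedge t}$ by a short string-diagram computation (stacking two discarding dots gives the discarding dot of the meet — of exactly the kind carried out for (R2) and (R3) in the proof of Proposition~\ref{prop:SCrestriction}, using \eqref{eq:SsissS}). Hence $\phi$ sends composition to $\wedge$; a bijective semilattice homomorphism is an isomorphism, so $\cO(A) \cong \ISub(\cat{C})$.

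The only genuine calculation here is the last one, showing that $g = \pi_{s\wedge t}$; everything else is bookkeeping with the equivalence relation on morphisms of $\S[\cat{C}]$ together with the fact that $\rest{(\,\cdot\,)}$ depends only on the subunit. I expect that small computation to be the main obstacle, though it is essentially a special case of the (R2)/(R3) verifications already performed above.
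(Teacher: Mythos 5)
Your proposal is correct and follows essentially the same route as the paper's proof: characterise the restriction idempotents as exactly the restrictions $[s,\pi_s]$ via Proposition~\ref{prop:SCrestriction}, then check that $[s,\pi_s]\mapsto s$ is a bijection preserving the top element and sending composition to $\wedge$ by the same string-diagram computation. The only difference is that you spell out the well-definedness and injectivity of $\phi$ from the equivalence relation on pairs, which the paper leaves implicit.
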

\begin{proof} 
 As in any restriction category, $\big[s,\;\begin{minipic}
    \node[morphism] (f) at (0,0) {$f$};
    \draw ([xshift=-1mm]f.south west) to +(0,-.2) node[right=-1mm,font=\tiny]{$A$};
    \draw ([xshift=1mm]f.south east) to +(0,-.2) node[right=-1mm,font=\tiny]{$S$};
    \draw (f.north) to +(0,.15) node[right=-1mm,font=\tiny]{$A$};
   \end{minipic}\!\big] \in \cO(A)$ if and only if
   \[\Big[s,\;\begin{minipic}
    \node[morphism] (f) at (0,0) {$f$};
    \draw ([xshift=-1mm]f.south west) to +(0,-.2) node[right=-1mm,font=\tiny]{$A$};
    \draw ([xshift=1mm]f.south east) to +(0,-.2) node[right=-1mm,font=\tiny]{$S$};
    \draw (f.north) to +(0,.15) node[right=-1mm,font=\tiny]{$A$};
   \end{minipic}\!\Big] = \rest{ \Big[t,\;\begin{minipic}
    \node[morphism] (f) at (0,0) {$g$};
    \draw ([xshift=-1mm]f.south west) to +(0,-.2) node[right=-1mm,font=\tiny]{$A$};
    \draw ([xshift=1mm]f.south east) to +(0,-.2) node[right=-1mm,font=\tiny]{$T$};
    \draw (f.north) to +(0,.15) node[right=-1mm,font=\tiny]{$B$};
   \end{minipic}\!\Big]}\] for some 
  $\big[t,\,\begin{minipic}
    \node[morphism] (f) at (0,0) {$g$};
    \draw ([xshift=-1mm]f.south west) to +(0,-.125) node[right=-1mm,font=\tiny]{$A$};
    \draw ([xshift=1mm]f.south east) to +(0,-.125) node[right=-1mm,font=\tiny]{$T$};
    \draw (f.north) to +(0,.125) node[right=-1mm,font=\tiny]{$B$};
   \end{minipic}\!\big]$. 
   Thus
   $\big[s,\,\begin{minipic}
   \draw (0,0) to (0,-.4) node[right=-1mm,font=\tiny]{$A$};
   \draw (.3,-.2) node[dot]{} to (.3,-.4) node[right=-1mm,font=\tiny]{$S$};
  \end{minipic}\!\big] \mapsto s$
   is a bijection $\phi_A \colon \cO(A) \to \ISub(\cat{C})$. It is clear that $\phi_A$ preserves the top element. It remains to show that it preserves meets:
 \begin{align*}
\phi_A\Big( \big[s,\,\begin{minipic}[baseline={(0,0)}]
   \draw (0,0) to (0,-.4) node[right=-1mm,font=\tiny]{$A$};
   \draw (.3,-.2) node[dot]{} to (.3,-.4) node[right=-1mm,font=\tiny]{$S$};
  \end{minipic}\!\big] \wedge \big[t,\,\begin{minipic}[baseline={(0,0)}]
   \draw (0,0) to (0,-.4) node[right=-1mm,font=\tiny]{$A$};
   \draw (.3,-.2) node[dot]{} to (.3,-.4) node[right=-1mm,font=\tiny]{$T$};
  \end{minipic}\!\big] \Big) 
  &=~ \phi_A\Big( \big[s,\,\begin{minipic}[baseline={(0,0)}]
   \draw (0,0) to (0,-.4) node[right=-1mm,font=\tiny]{$A$};
   \draw (.3,-.2) node[dot]{} to (.3,-.4) node[right=-1mm,font=\tiny]{$S$};
  \end{minipic}\big] \circ \big[t,\,\begin{minipic}[baseline={(0,0)}]
   \draw (0,0) to (0,-.4) node[right=-1mm,font=\tiny]{$A$};
   \draw (.3,-.2) node[dot]{} to (.3,-.4) node[right=-1mm,font=\tiny]{$T$};
  \end{minipic}\!\big] \Big) \\
  &=~ \phi_A\Big( \big[s \wedge t,\,\begin{minipic}[baseline={(0,0)}]
   \draw (0,0) node[right=-1mm,font=\tiny]{$A$} to (0,.4);
   \draw (.3,0) node[right=-1mm,font=\tiny]{$S$} to (.3,.2) node[dot]{};
   \draw (.6,0) node[right=-1mm,font=\tiny]{$T$} to (.6,.2) node[dot]{};
  \end{minipic}\!\big] \Big) \\
  &=~ \big( s \wedge t \colon S \otimes T \rightarrowtail I \big) \\
  &=~ \big( s \colon S \rightarrowtail I \big) \wedge \big( t \colon T \rightarrowtail I \big) \\
  &=~ \phi_A\Big( \big[s,\,\begin{minipic}
   \draw (0,0) to (0,-.5) node[right=-1mm,font=\tiny]{$A$};
   \draw (.3,-.2) node[dot]{} to (.3,-.5) node[right=-1mm,font=\tiny]{$S$};
  \end{minipic}\!\big] \Big) \wedge \phi_A\Big( \big[t,\,\begin{minipic}
   \draw (0,0) to (0,-.5) node[right=-1mm,font=\tiny]{$A$};
   \draw (.3,-.2) node[dot]{} to (.3,-.5) node[right=-1mm,font=\tiny]{$T$};
  \end{minipic}\!\big] \Big)
 \end{align*}
 Hence $\cO(A)$ is isomorphic to $\ISub(\cat{C})$ as semilattices. 
\end{proof}

An important subclass of morphisms of a restriction category is its class of restriction-total morphisms. Intuitively, these are the morphisms which are ``totally defined''. 

\begin{definition}\cite[Section 2.1.2]{cockettlack:restrictioncategories}
 In a restriction category $\cat{X}$, a \emph{restriction-total morphism} is a morphism $f \colon A \to B$ such that $\rest{f} = \id[A]$. The subcategory of all restriction-total morphisms of $\cat\mathsf{X}$ is denoted by $\T[\cat{X}]$.
 % (which is well-defined by \cite[Lemma 2.2]{cockettlack:restrictioncategories}) and $U \colon \T[\cat{X}] \to \cat{X}$ as the obvious forgetful functor. 
\end{definition}

\begin{example} 
 In $\cat{Par}$, the restriction-total morphism are precisely the total functions in the classical sense. So $\T[\cat{Par}] = \cat{Set}$.  
\end{example}

For $\S[\cat{C}]$, its subcategory of total morphisms is precisely the base category $\cat{C}$. 

\begin{proposition}\label{prop:totalinS}
 Let $\cat{C}$ be a firm monoidal category. A morphism in $\S[\cat{C}]$ is restric\-tion-total if and only if it is of the form $\big[1,\;\begin{minipic}
   \node[morphism] (f) at (0,0) {$f$};
   \draw (f.south) to +(0,-.125) node[right=-1mm,font=\tiny]{$A$};
   \draw (f.north) to +(0,.125) node[right=-1mm,font=\tiny]{$B$};
  \end{minipic}\!\big] $ for some morphism $f \colon A \to B$ in $\cat{C}$. This induces an isomorphism of categories $\cat{C} \cong \T\left[\S[\cat{C}]\right]$. 
\end{proposition}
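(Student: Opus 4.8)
The plan is to read the restriction-total morphisms straight off the restriction formula of Proposition~\ref{prop:SCrestriction}, and then observe that the resulting bijection on hom-sets is functorial.

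First I would unfold the condition $\rest{[s,f]}=\id[A]$. By Proposition~\ref{prop:SCrestriction} and the description of identities in $\S[\cat{C}]$, this says $\big[s,\rho_A\circ(A\otimes s)\big]=\big[\id[I],\rho_A\big]$, and the defining identification of pairs in $\S[\cat{C}]$ turns this into the requirement that there exist an isomorphism $m\colon I\to S$ with $s\circ m=\id[I]$; the matching condition on second components is then automatic, since $\rho_A\circ(A\otimes s)\circ(A\otimes m)=\rho_A\circ(A\otimes\id[I])=\rho_A$. Thus $[s,f]$ is restriction-total exactly when the subunit $s$ is a split epimorphism, and since $s$ is also monic this forces $s$ to be invertible; then $m=s^{-1}$, so the same identification yields $[s,f]=\big[\id[I],f\circ(A\otimes s^{-1})\big]=\big[1,f\circ(A\otimes s^{-1})\circ\rho_A^{-1}\big]$, which is of the claimed form. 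Conversely, for any $h\colon A\to B$ in $\cat{C}$ the morphism $[1,h]=\big[\id[I],h\circ\rho_A\big]$ has restriction $\big[\id[I],\rho_A\circ(A\otimes\id[I])\big]=\big[\id[I],\rho_A\big]=\id[A]$, hence is restriction-total. This gives the characterisation.

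Next I would set up the functor $F\colon\cat{C}\to\S[\cat{C}]$ with $F(A)=A$ and $F(h)=[1,h]$. Preservation of identities is immediate from the definition of identities in $\S[\cat{C}]$, and $F(h'\circ h)=F(h')\circ F(h)$ follows from the composition rule: the subunit component is the meet $\id[I]\wedge\id[I]=\id[I]$ in $\ISub(\cat{C})$, and the morphism component simplifies to $h'\circ h$ using naturality and coherence of $\rho$ and $\lambda_I$. By the previous paragraph $F$ factors through the subcategory $\T[\S[\cat{C}]]$ and hits every restriction-total morphism of $\S[\cat{C}]$; it is injective on hom-sets because $[1,h]=[1,h']$ forces the connecting isomorphism $I\to I$ to be $\id[I]$, whence $h\circ\rho_A=h'\circ\rho_A$ and so $h=h'$; and it is a bijection on objects since $\cat{C}$, $\S[\cat{C}]$ and $\T[\S[\cat{C}]]$ all have the same objects. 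A bijective-on-objects fully faithful functor is an isomorphism of categories, so $\cat{C}\cong\T[\S[\cat{C}]]$.

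The one step that is not pure bookkeeping is invoking ``a monic split epimorphism is invertible'' to promote $s$ to an isomorphism — this is exactly where being a subunit (in particular, monic) is used. The remaining care is purely notational: keeping track of the $\rho_A$ implicit in writing ``$[1,f]$ for $f\colon A\to B$'' when verifying functoriality and faithfulness.
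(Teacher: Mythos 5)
Your proof is correct and follows essentially the same route as the paper: unfold $\rest{[s,f]}=\id[A]$ via Proposition~\ref{prop:SCrestriction} and the equivalence relation on pairs to force $s=1$, then check that $f\mapsto[1,f]$ is an identity-on-objects, fully faithful functor onto $\T[\S[\cat{C}]]$. The paper leaves the second half as ``straightforward''; you have simply supplied the bookkeeping (and your detour through ``monic split epi $\Rightarrow$ iso'' is harmless, though the connecting map $m$ in the equivalence relation is already required to be an isomorphism, so $s$ is invertible immediately).
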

\begin{proof} 
 If $\big[s,\;\begin{minipic}
    \node[morphism] (f) at (0,0) {$f$};
    \draw ([xshift=-1mm]f.south west) to +(0,-.125) node[right=-1mm,font=\tiny]{$A$};
    \draw ([xshift=1mm]f.south east) to +(0,-.125) node[right=-1mm,font=\tiny]{$S$};
    \draw (f.north) to +(0,.125) node[right=-1mm,font=\tiny]{$B$};
   \end{minipic}\!\big]$ is a restriction-total morphism, then:
 \[  
  \big[s,\,\begin{minipic}
   \draw (0,0) to (0,-.5) node[right=-1mm,font=\tiny]{$A$};
   \draw (.3,-.2) node[dot]{} to (.3,-.5) node[right=-1mm,font=\tiny]{$S$};
  \end{minipic}\!\big]  
  = 
  \rest{\big[s,\;\begin{minipic}
    \node[morphism] (f) at (0,0) {$f$};
    \draw ([xshift=-1mm]f.south west) to +(0,-.2) node[right=-1mm,font=\tiny]{$A$};
    \draw ([xshift=1mm]f.south east) to +(0,-.2) node[right=-1mm,font=\tiny]{$S$};
    \draw (f.north) to +(0,.15) node[right=-1mm,font=\tiny]{$B$};
   \end{minipic}\!\big]} 
  = [1,\begin{minipic}
   \draw (0,.4) to (0,0) node[right=-1mm,font=\tiny]{$A$};
  \end{minipic}\!]\] 
  Therefore $s = 1$, and the restriction-total morphisms are of the form claimed.
  It is straightforward that this induces an isomorphism of categories $\cat{C} \cong \T\left[\S[\cat{C}]\right]$. 
\end{proof} 

The last class of morphisms that we will study are restriction isomorphisms.

\begin{definition} In a restriction category $\cat{X}$, a \emph{restriction isomorphism} is a morphism $f \colon A \to B$ which has a map $f^\circ \colon B \to A$, called its \emph{restriction inverse}, such that $f^\circ \circ f = \rest{f}$ and $f \circ f^\circ = \rest{f^\circ}$. 
\end{definition}

If a restriction inverse exists, it is unique.

\begin{example} 
 In $\cat{Par}$, define the domain and image of $f \colon X \to Y$ as usual:
\begin{align*}
\mathsf{dom}(f) = \{ x \mid \rest{f}(x) = x \} \subseteq X && \mathsf{im}(f) = \{ y \mid \exists x \in X \colon f(x) = y \} \subseteq Y
\end{align*}
 Then $f$ is a restriction isomorphism if and only if the canonical total function $\mathsf{dom}(f) \to \mathsf{im}(f)$ given by $x \mapsto f(x)$ is a bijection.
\end{example}

Restriction isomorphisms in $\S[\cat{C}]$ correspond to isomorphisms of a certain type in $\cat{C}$.

\begin{proposition}\label{prop:restrictionisosinS}
 Let $\cat{C}$ be a firm monoidal category. A morphism $\smash{\big[s,\;\begin{minipic}
    \node[morphism] (f) at (0,0) {$f$};
    \draw ([xshift=-1mm]f.south west) to +(0,-.125) node[right=-1mm,font=\tiny]{$A$};
    \draw ([xshift=1mm]f.south east) to +(0,-.125) node[right=-1mm,font=\tiny]{$S$};
    \draw (f.north) to +(0,.125) node[right=-1mm,font=\tiny]{$B$};
   \end{minipic}\!\big]}$ in $\S[\cat{C}]$ is a restriction isomorphism if and only if 
 \begin{equation}\label{eq:restrictedisomorphism}
  \begin{pic}
  \node[morphism] (f) at (0,0) {$f$};
  \node[dot] (d) at (.4,-.4) {};
  \draw (f.north) to +(0,.2) node[above]{$B$};
  \draw ([xshift=-1mm]f.south west) to +(0,-.5) node[below]{$A$};
  \draw (d) to +(0,-.3) node[below]{$S$};
  \draw (d) to[out=180,in=-90] ([xshift=1mm]f.south east);
  \draw (d) to[out=0,in=-90] +(.25,.4) to +(.25,.8) node[above]{$S$};
 \end{pic}\end{equation}
 is an isomorphism in $\cat{C}$. 
\end{proposition}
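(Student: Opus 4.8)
The plan is to pin down the restriction inverse of $[s,f]$ explicitly, reduce the two restriction-inverse equations to a single equation in $\cat{C}$, and recognise that equation as the invertibility of the morphism in~\eqref{eq:restrictedisomorphism}. Throughout write $\delta \colon S \to S \otimes S$ for the morphism drawn as a dot with one lower and two upper legs, i.e.\ the inverse of $\lambda \circ (s \otimes S) = \rho \circ (S \otimes s)$ (Lemma~\ref{lem:SsissS}), and write $m = (f \otimes S) \circ (A \otimes \delta) \colon A \otimes S \to B \otimes S$ for the morphism of~\eqref{eq:restrictedisomorphism}. The first observation is that the support of a restriction inverse is forced: if $[t,g] \colon B \to A$ is a restriction inverse of $[s,f]$, then since composition in $\S[\cat{C}]$ takes the meet of supports while $\rest{[s,f]}$ has support $s$ (Proposition~\ref{prop:SCrestriction}), the equation $[t,g] \circ [s,f] = \rest{[s,f]}$ forces $s \wedge t = s$ in $\ISub(\cat{C})$, and symmetrically $[s,f] \circ [t,g] = \rest{[t,g]}$ forces $s \wedge t = t$; hence $s = t$, and we may take the restriction inverse of the form $[s,g]$ with $g \colon B \otimes S \to A$.

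Next I would unwind the equations $[s,g] \circ [s,f] = \rest{[s,f]}$ and $[s,f] \circ [s,g] = \rest{[s,g]}$. Using the composition formula together with $s \wedge s = s$, realised by the reindexing isomorphism $\delta$ so that $[s \wedge s, h] = [s, h \circ (A \otimes \delta)]$, using Proposition~\ref{prop:SCrestriction} for the restriction, and using that $s$ is monic so that $[s,p] = [s,q]$ iff $p = q$, these two equations become, respectively,
\begin{equation*}
 g \circ m = \rho_A \circ (A \otimes s) \qquad\text{and}\qquad f \circ m' = \rho_B \circ (B \otimes s),
\end{equation*}
where $m' = (g \otimes S) \circ (B \otimes \delta) \colon B \otimes S \to A \otimes S$ is the same construction applied to $g$. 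So $[s,f]$ is a restriction isomorphism precisely when some $g$ solves both equations.

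The final ingredient is two identities, valid for arbitrary $f$ and $g$ as above: first $\rho_B \circ (B \otimes s) \circ m = f$, a short calculation from naturality of $\rho$, the counit law $\rho_S \circ (S \otimes s) \circ \delta = 1_S$, and the triangle identity; and second $m' \circ m = \bigl((g \circ m) \otimes S\bigr) \circ (A \otimes \delta)$, obtained by merging the two copies of $\delta$ that occur in $m' \circ m$ via coassociativity of $\delta$ (and symmetrically $m \circ m' = ((f \circ m') \otimes S) \circ (B \otimes \delta)$). Granting these, the proof finishes as follows. If $[s,f]$ is a restriction isomorphism, take $g$ solving the two equations above; then $g \circ m = \rho_A \circ (A \otimes s)$ and the second identity give $m' \circ m = (\rho_A \otimes S) \circ \bigl(A \otimes ((s \otimes S) \circ \delta)\bigr) = (\rho_A \otimes S) \circ (A \otimes \lambda_S^{-1}) = 1_{A \otimes S}$, using $\lambda_S \circ (s \otimes S) \circ \delta = 1_S$ and the triangle identity, and symmetrically $m \circ m' = 1_{B \otimes S}$, so $m$ is invertible. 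Conversely, if $m$ is invertible, set $g := \rho_A \circ (A \otimes s) \circ m^{-1}$; then $g \circ m = \rho_A \circ (A \otimes s)$, so $m' \circ m = 1_{A \otimes S}$ by the second identity and hence $m' = m^{-1}$, whence $f \circ m' = f \circ m^{-1} = \rho_B \circ (B \otimes s)$ by the first identity. Thus both equations hold and $[s,g]$ is a restriction inverse of $[s,f]$.

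I expect the main obstacle to be purely notational: keeping track of the unitor, associator and interchange coherences that the string diagrams suppress. The one step that genuinely does work, rather than bookkeeping, is the identity $m' \circ m = ((g \circ m) \otimes S) \circ (A \otimes \delta)$, where coassociativity of $\delta$ — itself a consequence of $S \cong S \otimes S$ and coherence — is what lets both restriction-inverse equations collapse to statements about $m$ alone; and it is worth being explicit that a restriction inverse is forced to have support exactly $s$, since that is precisely what reduces the question to a single equation in $\cat{C}$.
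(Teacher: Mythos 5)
Your proof is correct and follows essentially the same route as the paper: force the restriction inverse to have support $s$, unwind the two equations in $\S[\cat{C}]$ to the equations $g \circ m = \rho_A \circ (A \otimes s)$ and $f \circ m' = \rho_B \circ (B \otimes s)$ in $\cat{C}$, and use the counit law and coassociativity of $\delta$ to identify these with invertibility of $m$. If anything, you are slightly more careful than the paper in the converse direction, where you explicitly manufacture $g = \rho_A \circ (A \otimes s) \circ m^{-1}$ from a bare inverse of $m$ rather than assuming an inverse of the special form $m'$ is given.
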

\begin{proof}
 By definition, $\big[t,\;\begin{minipic}
    \node[morphism] (f) at (0,0) {$g$};
    \draw ([xshift=-1mm]f.south west) to +(0,-.2) node[right=-1mm,font=\tiny]{$B$};
    \draw ([xshift=1mm]f.south east) to +(0,-.2) node[right=-1mm,font=\tiny]{$T$};
    \draw (f.north) to +(0,.15) node[right=-1mm,font=\tiny]{$A$};
   \end{minipic}\!\big]$ is a restriction inverse of $\big[s,\;\begin{minipic}
    \node[morphism] (f) at (0,0) {$f$};
    \draw ([xshift=-1mm]f.south west) to +(0,-.2) node[right=-1mm,font=\tiny]{$A$};
    \draw ([xshift=1mm]f.south east) to +(0,-.2) node[right=-1mm,font=\tiny]{$S$};
    \draw (f.north) to +(0,.15) node[right=-1mm,font=\tiny]{$B$};
   \end{minipic}\!\big]$ if and only 
   \begin{align*}
\rest{\big[s,\;\begin{minipic}
    \node[morphism] (f) at (0,0) {$f$};
    \draw ([xshift=-1mm]f.south west) to +(0,-.2) node[right=-1mm,font=\tiny]{$A$};
    \draw ([xshift=1mm]f.south east) to +(0,-.2) node[right=-1mm,font=\tiny]{$S$};
    \draw (f.north) to +(0,.15) node[right=-1mm,font=\tiny]{$B$};
   \end{minipic}\!\big]} &= \big[t,\;\begin{minipic}
    \node[morphism] (f) at (0,0) {$g$};
    \draw ([xshift=-1mm]f.south west) to +(0,-.2) node[right=-1mm,font=\tiny]{$B$};
    \draw ([xshift=1mm]f.south east) to +(0,-.2) node[right=-1mm,font=\tiny]{$T$};
    \draw (f.north) to +(0,.15) node[right=-1mm,font=\tiny]{$A$};
   \end{minipic}\!\big] \circ \big[s,\;\begin{minipic}
    \node[morphism] (f) at (0,0) {$f$};
    \draw ([xshift=-1mm]f.south west) to +(0,-.2) node[right=-1mm,font=\tiny]{$A$};
    \draw ([xshift=1mm]f.south east) to +(0,-.2) node[right=-1mm,font=\tiny]{$S$};
    \draw (f.north) to +(0,.15) node[right=-1mm,font=\tiny]{$B$};
   \end{minipic}\!\big] \\
    \rest{\big[t,\;\begin{minipic}
    \node[morphism] (f) at (0,0) {$g$};
    \draw ([xshift=-1mm]f.south west) to +(0,-.2) node[right=-1mm,font=\tiny]{$B$};
    \draw ([xshift=1mm]f.south east) to +(0,-.2) node[right=-1mm,font=\tiny]{$T$};
    \draw (f.north) to +(0,.15) node[right=-1mm,font=\tiny]{$A$};
   \end{minipic}\!\big]} &= \big[s,\;\begin{minipic}
    \node[morphism] (f) at (0,0) {$f$};
    \draw ([xshift=-1mm]f.south west) to +(0,-.2) node[right=-1mm,font=\tiny]{$A$};
    \draw ([xshift=1mm]f.south east) to +(0,-.2) node[right=-1mm,font=\tiny]{$S$};
    \draw (f.north) to +(0,.15) node[right=-1mm,font=\tiny]{$B$};
   \end{minipic}\!\big] \circ \big[t,\;\begin{minipic}
    \node[morphism] (f) at (0,0) {$g$};
    \draw ([xshift=-1mm]f.south west) to +(0,-.2) node[right=-1mm,font=\tiny]{$B$};
    \draw ([xshift=1mm]f.south east) to +(0,-.2) node[right=-1mm,font=\tiny]{$T$};
    \draw (f.north) to +(0,.15) node[right=-1mm,font=\tiny]{$A$};
   \end{minipic}\!\big]
\end{align*}
In $\S[\cat{C}]$, this means $t=s$ and:
 \begin{equation}\tag{$*$}
  \begin{pic}
  \node[morphism,width=6mm] (f) at (0,0) {$f$};
  \node[morphism,width=6mm] (g) at (.6,.75) {$g$};
  \node[dot] (d) at (.5,-.5) {};
  \draw (d) to[out=180,in=-90] (f.south east);
  \draw (d) to[out=0,in=-90,looseness=.7] (g.south east);
  \draw (d) to +(0,-.3) node[below]{$S$};
  \draw (f.south west) to +(0,-.6) node[below]{$A$};
  \draw (f.north east) to node[left]{$B$} +(0,.35);
  \draw (g.north) to +(0,.2) node[above]{$A$};
 \end{pic}
 \; = \;
 \begin{pic}
  \draw (0,0) node[below]{$A$} to (0,1.9) node[above]{$A$};
  \draw (.3,0) node[below]{$S$} to (.3,1) node[dot]{};
 \end{pic}
 \qquad\qquad
 \begin{pic}
  \node[morphism,width=6mm] (f) at (0,0) {$g$};
  \node[morphism,width=6mm] (g) at (.6,.75) {$f$};
  \node[dot] (d) at (.5,-.5) {};
  \draw (d) to[out=180,in=-90] (f.south east);
  \draw (d) to[out=0,in=-90,looseness=.7] (g.south east);
  \draw (d) to +(0,-.3) node[below]{$S$};
  \draw (f.south west) to +(0,-.6) node[below]{$B$};
  \draw (f.north east) to node[left]{$A$} +(0,.35);
  \draw (g.north) to +(0,.2) node[above]{$B$};
 \end{pic}
 \; = \;
 \begin{pic}
  \draw (0,0) node[below]{$B$} to (0,1.9) node[above]{$B$};
  \draw (.3,0) node[below]{$S$} to (.3,1) node[dot]{};
 \end{pic}
 \end{equation}
 We will show that this is the case if and only if~\eqref{eq:restrictedisomorphism} is an isomorphism, with inverse:
 \begin{equation}\label{eq:restrictedinverse}
  \begin{pic}
  \node[morphism] (f) at (0,0) {$g$};
  \node[dot] (d) at (.4,-.4) {};
  \draw (f.north) to +(0,.2) node[above]{$A$};
  \draw ([xshift=-1mm]f.south west) to +(0,-.5) node[below]{$B$};
  \draw (d) to +(0,-.3) node[below]{$S$};
  \draw (d) to[out=180,in=-90] ([xshift=1mm]f.south east);
  \draw (d) to[out=0,in=-90] +(.25,.4) to +(.25,.8) node[above]{$S$};
  \end{pic}
 \end{equation}
 If~\eqref{eq:restrictedinverse} inverts~\eqref{eq:restrictedisomorphism},
 then:
 \[
  \begin{pic}
  \node[morphism,width=6mm] (f) at (0,0) {$f$};
  \node[morphism,width=6mm] (g) at (.6,.75) {$g$};
  \node[dot] (d) at (.5,-.5) {};
  \draw (d) to[out=180,in=-90] (f.south east);
  \draw (d) to[out=0,in=-90,looseness=.7] (g.south east);
  \draw (d) to +(0,-.3) node[below]{$S$};
  \draw (f.south west) to +(0,-.6) node[below]{$A$};
  \draw (f.north east) to node[left]{$B$} +(0,.35);
  \draw (g.north) to +(0,.2) node[above]{$A$};
  \end{pic}
  \; = \;
  \begin{pic}
  \node[morphism,width=6mm] (f) at (0,0) {$f$};
  \node[morphism,width=6mm] (g) at (.6,.75) {$g$};
  \node[dot] (d) at (.6,-.5) {};
  \node[dot] (D) at (1.1,.1) {};
  \draw (d) to[out=180,in=-90] (f.south east);
  \draw (d) to[out=0,in=-90,looseness=.7] (D);
  \draw (D) to[out=180,in=-90] (g.south east);
  \draw (D) to[out=0,in=-90] +(.3,.5) to +(.3,.9) node[dot]{};
  \draw (d) to +(0,-.3) node[below]{$S$};
  \draw (f.south west) to +(0,-.6) node[below]{$A$};
  \draw (f.north east) to node[left]{$B$} +(0,.35);
  \draw (g.north) to +(0,.2) node[above]{$A$};
  \end{pic}
  \; = \;  
  \begin{pic}
  \draw (0,0) node[below]{$A$} to (0,1.9) node[above]{$A$};
  \draw (.3,0) node[below]{$S$} to (.3,1) node[dot]{};
  \end{pic}
 \]
 Similarly with the composition in the other order. Therefore ($*$) holds.

 Conversely, if ($*$) holds then:
 \[
  \begin{pic}
   \node[morphism,width=6mm] (f) at (0,0) {$f$};
   \node[morphism,width=6mm] (g) at (.6,.75) {$g$};
   \node[dot] (d) at (.7,-.6) {};
   \node[dot] (D) at (1.2,0) {};
   \draw (d) to[out=180,in=-90] (f.south east);
   \draw (d) to[out=0,in=-90] (D.south);
   \draw (D) to[out=180,in=-90] (g.south east);
   \draw (d) to +(0,-.6) node[below]{$S$};
   \draw (f.south west) to +(0,-1) node[below]{$A$};
   \draw (f.north east) to node[left]{$B$} +(0,.35);
   \draw (g.north) to +(0,.2) node[above]{$A$};
   \draw (D) to[out=0,in=-90] +(.4,.5) to +(.4,1.15) node[above]{$S$};
  \end{pic}
  \;=\;
  \begin{pic}
   \node[morphism,width=6mm] (f) at (0,0) {$f$};
   \node[morphism,width=6mm] (g) at (.6,.75) {$g$};
   \node[dot] (d) at (.5,-.5) {};
   \node[dot] (D) at (.9,-.9) {};
   \draw (d) to[out=180,in=-90,looseness=.7] (f.south east);
   \draw (d) to[out=0,in=-90,looseness=.7] (g.south east);
   \draw (D) to[out=180,in=-90] (d);
   \draw (D) to +(0,-.3) node[below]{$S$};
   \draw (f.south west) to +(0,-1) node[below]{$A$};
   \draw (f.north east) to node[left]{$B$} +(0,.35);
   \draw (g.north) to +(0,.2) node[above]{$A$};
   \draw (D) to[out=0,in=-90] +(.4,.5) to +(.4,2.05) node[above]{$S$};
  \end{pic}
  \;=\;
  \begin{pic}
   \draw (0,0) node[below]{$A$} to +(0,2.3) node[above]{$A$};
   \node[dot] (d) at (.7,1) {};
   \draw (.7,0) node[below]{$S$} to (d);
   \draw (d) to[out=180,in=-90] +(-.4,.4) node[dot]{};
   \draw (d) to[out=0,in=-90] +(.4,.4) to +(.4,1.3) node[above]{$S$};
  \end{pic}
  \;=\;
  \begin{pic}
   \draw (0,0) node[below]{$A$} to +(0,2.3) node[above]{$A$};
   \draw (.5,0) node[below]{$S$} to +(0,2.3) node[above]{$S$};
  \end{pic}
 \]
 The composition in the other order is similar, showing that~\eqref{eq:restrictedisomorphism} is invertible.
\end{proof}

Recall that an \emph{inverse category} is a restriction category where every morphism has a restriction inverse~\cite[2.3.2]{cockettlack:restrictioncategories}. The previous proposition immediately gives us a characterisation of when $\S[\cat{C}]$ (and hence, jumping ahead slightly, any tensor-restriction category) is an inverse category. Later on, we will show that this gives an instance of the $\S$-construction after Corollary~\ref{cor:simple} below.

\begin{corollary}\label{cor:inversecat}
  Let $\cat{C}$ be a firm monoidal category. Then $\S[\cat{C}]$ is an inverse category if and only if $\cat{C}$ is a groupoid.
\end{corollary}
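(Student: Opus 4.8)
The plan is to deduce everything from Proposition~\ref{prop:restrictionisosinS}. Recall that $\S[\cat{C}]$ is an inverse category exactly when every morphism $[s,f]\colon A \to B$ of $\S[\cat{C}]$ is a restriction isomorphism, and by Proposition~\ref{prop:restrictionisosinS} this happens exactly when the associated morphism~\eqref{eq:restrictedisomorphism}, which is a morphism $A \otimes S \to B \otimes S$ in $\cat{C}$ assembled from $f$ and the canonical splitting $S \to S \otimes S$ of the subunit $s$, is invertible in $\cat{C}$. So the claim to prove reduces to: every morphism of the form~\eqref{eq:restrictedisomorphism} is invertible in $\cat{C}$ if and only if $\cat{C}$ is a groupoid.

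For the ``if'' direction, suppose $\cat{C}$ is a groupoid. Then \emph{every} morphism of $\cat{C}$ is an isomorphism, in particular the morphism~\eqref{eq:restrictedisomorphism} attached to an arbitrary $[s,f]$. By Proposition~\ref{prop:restrictionisosinS} every morphism of $\S[\cat{C}]$ is therefore a restriction isomorphism, so $\S[\cat{C}]$ is an inverse category.

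For the ``only if'' direction, suppose $\S[\cat{C}]$ is an inverse category and let $f\colon A \to B$ be an arbitrary morphism of $\cat{C}$. By Proposition~\ref{prop:totalinS} there is a restriction-total morphism $[1,f]\colon A\to B$ in $\S[\cat{C}]$, with subunit the top element $\id[I]\colon I\to I$ and underlying map $f$ regarded as $A\otimes I\to B$ via $\rho_A$. Since $\S[\cat{C}]$ is an inverse category, $[1,f]$ is a restriction isomorphism, so by Proposition~\ref{prop:restrictionisosinS} the morphism~\eqref{eq:restrictedisomorphism} for $[1,f]$, which has $S=I$, is an isomorphism in $\cat{C}$. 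When $S=I$ the subunit is $\id[I]$, so the canonical splitting $I\to I\otimes I$ occurring in~\eqref{eq:restrictedisomorphism} is precisely the coherence isomorphism $\lambda_I^{-1}=\rho_I^{-1}$; hence by coherence~\eqref{eq:restrictedisomorphism} equals $f\otimes\id[I]$ (up to the unit coherence isomorphisms $\rho_A$ and $\rho_B$), and by naturality of $\rho$ this is invertible if and only if $f$ is. Thus $f$ is an isomorphism, and since $f$ was arbitrary, $\cat{C}$ is a groupoid.

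I do not expect a genuine obstacle: the substantive work has already been done in Proposition~\ref{prop:restrictionisosinS}, after which the ``if'' direction is immediate and the only care needed in the ``only if'' direction is the routine bookkeeping of coherence isomorphisms in the $S=I$ specialisation of~\eqref{eq:restrictedisomorphism}. One could instead phrase the ``only if'' direction through the isomorphism $\cat{C}\cong\T[\S[\cat{C}]]$ of Proposition~\ref{prop:totalinS}, but one still has to invoke Proposition~\ref{prop:restrictionisosinS} to promote ``restriction isomorphism'' to an honest isomorphism of $\cat{C}$, since in a general inverse category a restriction-total restriction isomorphism need not be an isomorphism.
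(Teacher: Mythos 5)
Your proof is correct and follows essentially the same route as the paper: both directions reduce to Proposition~\ref{prop:restrictionisosinS}, with the ``only if'' direction obtained by specialising to $s=1$ and unwinding the coherence isomorphisms. The only cosmetic difference is in the ``if'' direction, where the paper notes that a groupoid has $\ISub(\cat{C})=\{1\}$, while you simply observe that every morphism of a groupoid, in particular~\eqref{eq:restrictedisomorphism}, is invertible; both are fine.
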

\begin{proof}
  By definition, $\S[\cat{C}]$ is an inverse category if and only if every morphism is a restriction isomorphism. By Proposition~\ref{prop:restrictionisosinS} this happens exactly when any map of the form~\eqref{eq:restrictedisomorphism} is an isomorphism in $\cat{C}$ for every subunit $s$.
  Taking $s=1$ implies that $\cat{C}$ is a groupoid.
  Conversely, if $\cat{C}$ is a groupoid, then $\ISub(\cat{C})=\{1\}$, and hence~\eqref{eq:restrictedisomorphism} is invertible.
\end{proof}

We finish this section by exhibiting an important aspect of the $\S[-]$-construction: it produces an orthogonal factorisation system, that intuitively separates the restriction aspect from the base category aspect.

\begin{proposition}
 If $\cat{C}$ is a firm monoidal category, then $\S[\cat{C}]$ has an orthogonal factorisation system given by:
 \begin{align*}
  \mathcal{E} & = \Big\{ \text{restriction isomorphisms of the form } \big[s,\;\begin{minipic}
    \node[morphism] (f) at (0,0) {$f$};
    \draw ([xshift=-1mm]f.south west) to +(0,-.2) node[right=-1mm,font=\tiny]{$A$};
    \draw ([xshift=1mm]f.south east) to +(0,-.2) node[right=-1mm,font=\tiny]{$S$};
    \draw ([xshift=-1mm]f.north west) to +(0,.2) node[right=-1mm,font=\tiny]{$B$};
       \draw ([xshift=1mm]f.north east) to +(0,.2) node[right=-1mm,font=\tiny]{$S$};
   \end{minipic}\!\big] \Big\}   \\
  \mathcal{M} & = \Big\{ \text{restriction-total morphisms } \big[1,\;\begin{minipic}
   \node[morphism] (f) at (0,0) {$f$};
   \draw (f.south) to +(0,-.2) node[right=-1mm,font=\tiny]{$A$};
   \draw (f.north) to +(0,.2) node[right=-1mm,font=\tiny]{$B$};
  \end{minipic}\!\big] \Big\}
 \end{align*}
\end{proposition}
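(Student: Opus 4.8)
The plan is to verify the four conditions defining an orthogonal factorisation system $(\mathcal{E},\mathcal{M})$: each class contains every isomorphism and is closed under composition, every morphism of $\S[\cat{C}]$ is an $\mathcal{M}$-morphism composed after an $\mathcal{E}$-morphism, and every commuting square with a member of $\mathcal{E}$ on the left and a member of $\mathcal{M}$ on the right has a unique diagonal filler.

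The conditions on $\mathcal{M}$ are routine: $[1,g]\circ[1,f]=[1,g\circ f]$, the identities $[1,\rho]$ are restriction-total, and every isomorphism of $\S[\cat{C}]$ is restriction-total because $\rest{1}=1$ and, as in any restriction category, $\rest{h\circ k}\leq\rest{k}$. For $\mathcal{E}$ one reads off from the composition formula that a composite of two morphisms of the displayed shape again has that shape, now with recorded subunit $s\wedge t$ and underlying map the composite of the two underlying isomorphisms tensored with the appropriate subunit; identities have this shape too, and that such morphisms are restriction isomorphisms is exactly Proposition~\ref{prop:restrictionisosinS}. (Closure of $\mathcal{E}$ under composition may alternatively be recovered afterwards from the factorisation together with orthogonality.)

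For the factorisation, given $[s,f]\colon A\to B$ with $f\colon A\otimes S\to B$, I would take the intermediate object $A\otimes S$ and use the decomposition
\[
 [s,f]\;=\;\big[1,\,f\big]\;\circ\;\big[s,\,1_{A\otimes S}\big],
\]
where $[s,1_{A\otimes S}]\colon A\to A\otimes S$ is a restriction isomorphism of the required form, its underlying map being the manifestly invertible $1_{A\otimes S}$ and its codomain $A\otimes S$ having the shape ``object tensored with $S$'', and where $[1,f]\colon A\otimes S\to B$ is restriction-total. Unwinding the right-hand composite by the composition formula and $s\wedge 1=s$ returns $[s,f]$.

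The real content, and the step I expect to be the main obstacle, is the unique diagonal fill-in. Consider a commuting square with $e=[s,a]$ in $\mathcal{E}$ on the left, $m=[1,b]$ in $\mathcal{M}$ on the right, top edge $[u,c]$ and bottom edge $[t,h]$. Expanding the two composites and comparing, commutativity forces $u=s\wedge t$ and an equality of the underlying maps over the common domain (of the form $h\circ(a\otimes T)=b\circ c$, up to the coherence isomorphism identifying domains). A candidate diagonal $[t,g]$ is then completely pinned down: postcomposing with $m$ forces the recorded subunit to be $t$ and forces $b\circ g=h$, while precomposing with $e$ forces $g\circ(a\otimes T)=c$. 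The point of the special form of $\mathcal{E}$ is that the underlying map $a$ is an isomorphism of $\cat{C}$ — equivalently, by Proposition~\ref{prop:restrictionisosinS}, that $e$ is a restriction isomorphism presented through its associated isomorphism — so $a\otimes T$ is invertible and $g:=c\circ(a\otimes T)^{-1}$ is the unique solution, with $b\circ g=h$ then following from the square. The delicate part throughout is the bookkeeping of recorded subunits and coherence isomorphisms inside the graphical calculus, in particular checking that the subunits recorded on the two sides of each equation are genuinely forced to coincide, which rests implicitly on the identification of restriction idempotents with subunits from Proposition~\ref{prop:restrictionidempotentsinS}.
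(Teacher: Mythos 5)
Your proposal is correct in outline and uses the same factorisation as the paper ($[s,f]=[1,f]\circ[s,1_{A\otimes S}]$), but it handles the lifting step by a genuinely different route. The paper never inverts the underlying $\cat{C}$-morphism of an $\mathcal{E}$-map: it takes the restriction inverse $[s,f^\circ]$ guaranteed by membership in $\mathcal{E}$, defines the diagonal as an explicit composite involving $f^\circ$ and the subunit comultiplication, and then verifies both triangles by sliding dots with Lemma~\ref{lem:SsissS}; uniqueness is asserted from monicity considerations. You instead reduce everything to invertibility of the underlying map $a\colon A\otimes S\to B\otimes S$ in $\cat{C}$ and set $g=c\circ(a\otimes T)^{-1}$, which makes both existence and uniqueness of the filler immediate and is arguably the cleaner argument. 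The one step you should not wave at is exactly the claim that $a$ itself is invertible: Proposition~\ref{prop:restrictionisosinS} only says that the padded map $\tilde a=(a\otimes S)\circ(A\otimes\delta)\colon A\otimes S\to (B\otimes S)\otimes S$ is an isomorphism, where $\delta$ is the inverse of $\mu=\rho\circ(S\otimes s)$. To conclude that $a$ is invertible you must check that postcomposing $\tilde a$ with the isomorphism $\rho\circ(-\otimes s)\colon(B\otimes S)\otimes S\to B\otimes S$ recovers $a$; this follows from naturality of $\rho\circ(-\otimes s)$ in the first variable together with $\mu\circ\delta=\id$ and coherence, and it is here that Lemma~\ref{lem:SsissS} enters your argument in the same role it plays in the paper's, just packaged differently. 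With that small lemma supplied, your proof is complete; the bookkeeping of recorded subunits ($v=t$ forced by the lower triangle, $s\wedge v=u$ consistent with the commuting square) is as you describe, and your verification of the closure properties of $\mathcal{E}$ and $\mathcal{M}$ matches what the paper leaves to the reader.
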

\begin{proof}
 Clearly both $\mathcal{M}$ and $\mathcal{E}$ are closed under composition and contain all isomorphisms.
 Furthermore, any morphism $\big[s,\;\begin{minipic}
    \node[morphism] (f) at (0,0) {$f$};
    \draw ([xshift=-1mm]f.south west) to +(0,-.125) node[right=-1mm,font=\tiny]{$A$};
    \draw ([xshift=1mm]f.south east) to +(0,-.125) node[right=-1mm,font=\tiny]{$S$};
    \draw (f.north) to +(0,.125) node[right=-1mm,font=\tiny]{$B$};
   \end{minipic}\!\big]$ factors as the restriction isomorphism $[s,\begin{minipic}[baseline={(0,.2)}]
   \draw (-.3,.3) to (-.3,0) node[right=-1mm,font=\tiny]{$A$};
     \draw (0,.3) to (0,0) node[right=-1mm,font=\tiny]{$S$};
  \end{minipic}\!]\colon A \to A\otimes S$ in $\mathcal{E}$ followed by the restriction-total morphism $\big[1,\;\begin{minipic}
    \node[morphism] (f) at (0,0) {$f$};
    \draw ([xshift=-1mm]f.south west) to +(0,-.125) node[right=-1mm,font=\tiny]{$A$};
    \draw ([xshift=1mm]f.south east) to +(0,-.125) node[right=-1mm,font=\tiny]{$S$};
    \draw (f.north) to +(0,.125) node[right=-1mm,font=\tiny]{$B$};
   \end{minipic}\!\big] \colon A \otimes S \to B$ in $\mathcal{M}$. 
 \[\begin{pic}[xscale=2,yscale=1.5]
  \node (tl) at (0,1) {$A$};
  \node (tr) at (2,1) {$B$};
  \node (b) at (1,0) {$A \otimes S$};
  \draw[->] (tl) to node[above]{$\Big[s,\;\begin{minipic}
    \node[morphism] (f) at (0,0) {$f$};
    \draw[-] ([xshift=-1mm]f.south west) to +(0,-.125) node[right=-1mm,font=\tiny]{$A$};
    \draw[-] ([xshift=1mm]f.south east) to +(0,-.125) node[right=-1mm,font=\tiny]{$S$};
    \draw[-] (f.north) to +(0,.125) node[right=-1mm,font=\tiny]{$B$};
   \end{minipic}\!\Big]$} (tr);
  \draw[->,dashed] (tl) to node[left]{$\mathcal{E} \ni [s,\begin{minipic}[baseline={(0,.3)},solid]
   \draw[-] (-.3,.3) to (-.3,0) node[right=-1mm,font=\tiny]{$A$};
     \draw[-] (0,.3) to (0,0) node[right=-1mm,font=\tiny]{$S$};
  \end{minipic}\!]$} (b);
  \draw[->,dashed] (b) to node[right]{$\big[1,\;\begin{minipic}
    \node[solid,morphism] (f) at (0,0) {$f$};
    \draw[-] ([xshift=-1mm]f.south west) to +(0,-.125) node[right=-1mm,font=\tiny]{$A$};
    \draw[-] ([xshift=1mm]f.south east) to +(0,-.125) node[right=-1mm,font=\tiny]{$S$};
    \draw[-] (f.north) to +(0,.125) node[right=-1mm,font=\tiny]{$B$};
   \end{minipic}\!\big] \in \mathcal{M}$} (tr);
 \end{pic}\]
 We will show that any commuting square as below has a unique diagonal fill-in:
 \[\begin{pic}[xscale=3,yscale=1.5]
  \node (tl) at (0,1) {$A$};
  \node (tr) at (1,1) {$B \otimes S$};
  \node (bl) at (0,0) {$C$};
  \node (br) at (1,0) {$D$};
  \draw[->] (tl) to node[above]{$\big[s,\;\begin{minipic}[-]
    \node[morphism] (f) at (0,0) {$f$};
    \draw ([xshift=-1mm]f.south west) to +(0,-.125) node[right=-1mm,font=\tiny]{$A$};
    \draw ([xshift=1mm]f.south east) to +(0,-.125) node[right=-1mm,font=\tiny]{$S$};
    \draw (f.north) to +(0,.125) node[right=-1mm,font=\tiny]{$B$};
   \end{minipic}\!\big] \in \mathcal{E}$} (tr);
  \draw[->] (tr) to node[right]{$[t,h]$} (br);
  \draw[->] (tl) to node[left]{$[r,g]$} (bl);
  \draw[->] (bl) to node[below]{$[\id[I],k] \in \mathcal{M}$} (br);
  \draw[->,dashed] (tr) to node[right]{$[t,m]$} (bl);
 \end{pic}\]
 That the outer square commutes means that $r = s \wedge t$ and:
 \[
  \begin{pic}
   \node[morphism,width=10mm] (h) at (0,1) {$h$};
   \node[morphism,width=5mm] (f) at (-.25,.2) {$f$};
   \draw (h.north) to +(0,.3) node[above]{$D$};
   \draw (f.south west) to +(0,-.3) node[below]{$A$};
   \draw (f.south east) to +(0,-.3) node[below]{$S$};
   \draw (f.north west) to +(0,.4);
   \draw (f.north east) to +(0,.4);
   \draw (h.south east) to +(0,-1.1) node[below]{$T$};
  \end{pic}
  \;=\;
  \begin{pic}
   \node[morphism,width=10mm] (g) at (0,0) {$g$};
   \node[morphism,width=5mm] (k) at (0,.8) {$k$};
   \draw (g.north) to +(0,.4);
   \draw (k.north) to +(0,.3) node[above]{$D$};
   \draw (g.south west) to +(0,-.3) node[below]{$A$};
   \draw (g.south) to +(0,-.3) node[below]{$S$};
   \draw (g.south east) to +(0,-.3) node[below]{$T$};
  \end{pic}
 \]
 The fact that $\Big[s,\;\begin{minipic}
    \node[morphism] (f) at (0,0) {$f$};
    \draw ([xshift=-1mm]f.south west) to +(0,-.125) node[right=-1mm,font=\tiny]{$A$};
    \draw ([xshift=1mm]f.south east) to +(0,-.125) node[right=-1mm,font=\tiny]{$S$};
    \draw ([xshift=-1mm]f.north west) to +(0,.125) node[right=-1mm,font=\tiny]{$B$};
       \draw ([xshift=1mm]f.north east) to +(0,.125) node[right=-1mm,font=\tiny]{$S$};
   \end{minipic}\!\Big] \in \mathcal{E}$ means that it has a restriction inverse $\Big[s,\;\begin{minipic}
    \node[morphism,width=8mm,anchor=north] (f) at (0,0) {$f^\circ$};
    \draw ([xshift=-1mm]f.south west) to +(0,-.125) node[right=-1mm,font=\tiny]{$B$};
    \draw ([xshift=1mm]f.south east) to +(0,-.125) node[right=-1mm,font=\tiny]{$S$};
    \draw (f.north) to +(0,.125) node[right=-1mm,font=\tiny]{$A$};
        \draw (f.south) to +(0,-.125) node[right=-1mm,font=\tiny]{$S$};
   \end{minipic}\!\Big]$:
 \[
  \begin{pic}
   \node[morphism,width=8mm] (g) at (0,1) {$f$};
   \node[morphism,width=8mm,anchor=north] (f) at ([yshift=-3mm]g.south west) {$f^\circ$};
   \draw (f.north) to (g.south west);
   \draw ([xshift=1mm]g.north east) to +(0,.3) node[above]{$S$};
   \draw (g.north west) to +(0,.3) node[above]{$B$};
   \node[dot] (D) at ([xshift=2mm,yshift=-3mm]f.south) {};
   \draw (D) to[out=180,in=-90] (f.south);
   \draw (D) to[out=0,in=-90] ([xshift=1mm]f.south east);
   \node[dot] (d) at ([xshift=2mm,yshift=-3mm]D) {};
   \draw (d) to[out=180,in=-90] (D);
   \draw (d) to +(0,-.3) node[below]{$S$};
   \draw (d) to[out=0,in=-90] ([xshift=1mm,yshift=-8mm]g.south east) to ([xshift=1mm]g.south east);
   \draw (f.south west) to +(0,-.9) node[below]{$B$};
  \end{pic}
  \;=\;
  \begin{pic}
   \draw (0,0) node[below]{$B$} to (0,2.3) node[above]{$B$};
   \draw (.5,0) node[below]{$S$} to +(0,2.3) node[above]{$S$};
  \end{pic}
  \qquad\qquad
  \begin{pic}
   \node[morphism,width=10mm] (fo) at (0,1) {$f^\circ$};
   \node[morphism,width=6mm] (f) at (-.2,.2) {$f$};
   \draw (f.north west) to +(0,.4);
   \draw (f.north east) to +(0,.4);
   \node[dot] (d) at (.25,-.4) {};
   \draw (d) to[out=180,in=-90,looseness=.8] (f.south east);
   \draw (d) to[out=0,in=-90] +(.25,.4) to ([xshift=1mm]fo.south east);
   \draw (d) to +(0,-.3) node[below]{$S$};
   \draw (f.south west) to +(0,-.7) node[below]{$A$};
   \draw (fo.north) to +(0,.4) node[above]{$A$};
  \end{pic}
  \;=\;
  \begin{pic}
   \draw (0,0) node[below]{$A$} to +(0,2.3) node[above]{$A$};
   \draw (.5,0) node[below]{$S$} to +(0,1) node[dot]{};
  \end{pic}
 \]
 Define $m \colon B \otimes S \otimes T \to C$ as follows:
 \[\begin{pic}
    \node[morphism,width=8mm,anchor=north] (f) at (0,0) {$m$};
    \draw ([xshift=-1mm]f.south west) to +(0,-1) node[below]{$B$};
    \draw ([xshift=1mm]f.south east) to +(0,-1) node[below]{$T$};
    \draw (f.north) to +(0,.9) node[above]{$C$};
        \draw (f.south) to +(0,-1) node[below]{$S$};
   \end{pic} = \begin{pic}
  \node[morphism,width=10mm] (g) at (0,1) {$g$};
  \node[morphism,width=8mm,anchor=north east] (f) at ([yshift=-3mm]g.south west) {$f^\circ$};
  \draw (f.north east) to (g.south west);
  \draw (g.north) to +(0,.3) node[above]{$C$};
  \draw ([xshift=1mm]g.south east) to +(0,-1.6) node[below]{$T$};
  \node[dot] (D) at ([xshift=2mm,yshift=-3mm]f.south) {};
  \draw (D) to[out=180,in=-90] (f.south);
  \draw (D) to[out=0,in=-90] ([xshift=1mm]f.south east);
  \node[dot] (d) at ([xshift=2mm,yshift=-3mm]D) {};
  \draw (d) to[out=180,in=-90] (D);
  \draw (d) to +(0,-.3) node[below]{$S$};
  \draw (d) to[out=0,in=-90] +(.3,.3) to (g.south);
  \draw (f.south west) to +(0,-.9) node[below]{$B$};
 \end{pic}\]
 Then both triangles commute by Lemma~\ref{lem:SsissS}:
 \[ 
  \begin{pic}
   \node[morphism,width=10mm] (h) at (0,1) {$m$};
   \node[morphism,width=5mm] (f) at (-.25,.3) {$f$};
   \draw (h.north) to +(0,.3) node[above]{$C$};
   \draw (f.south west) to +(0,-.3) node[below]{$A$};
   \draw (f.south east) to +(0,-.3) node[below]{$S$};
   \draw (f.north west) to +(0,.3);
   \draw (f.north east) to +(0,.3);
   \draw (h.south east) to +(0,-1) node[below]{$T$};
  \end{pic}
  \;=\;
  \begin{pic}
   \node[morphism,width=10mm] (g) at (0,0) {$g$};
   \draw (g.north) to +(0,.7) node[above]{$C$};
   \draw (g.south west) to +(0,-.6) node[below]{$A$};
   \draw (g.south) to +(0,-.6) node[below]{$S$};
   \draw (g.south east) to +(0,-.6) node[below]{$T$};
  \end{pic}
  \qquad\qquad
  \begin{pic}
   \node[morphism,width=10mm] (g) at (0,0) {$m$};
   \node[morphism,width=5mm] (k) at (0,.8) {$k$};
   \draw (g.north) to +(0,.4);
   \draw (k.north) to +(0,.3) node[above]{$D$};
   \draw (g.south west) to +(0,-.3) node[below]{$B$};
   \draw (g.south) to +(0,-.3) node[below]{$S$};
   \draw (g.south east) to +(0,-.3) node[below]{$T$};
  \end{pic}
  \;=\;
  \begin{pic}
   \node[morphism,width=10mm] (g) at (0,0) {$h$};
   \draw (g.north) to +(0,.7) node[above]{$D$};
   \draw (g.south west) to +(0,-.6) node[below]{$B$};
   \draw (g.south) to +(0,-.6) node[below]{$S$};
   \draw (g.south east) to +(0,-.6) node[below]{$T$};
  \end{pic}
 \]
 Clearly $\Big[t,\;\begin{minipic}
    \node[morphism,width=8mm,anchor=north] (f) at (0,0) {$m$};
    \draw ([xshift=-1mm]f.south west) to +(0,-.125) node[right=-1mm,font=\tiny]{$B$};
    \draw ([xshift=1mm]f.south east) to +(0,-.125) node[right=-1mm,font=\tiny]{$T$};
    \draw (f.north) to +(0,.125) node[right=-1mm,font=\tiny]{$C$};
        \draw (f.south) to +(0,-.125) node[right=-1mm,font=\tiny]{$S$};
   \end{minipic}\!\Big]$ is the unique map achieving this. 
\end{proof}

\section{Monoidal Restriction Categories}\label{sec:monoidalrestrictioncats}

In this section, we will show that $\S[-]$-construction results in a firm monoidal category whose subunits are precisely those of the base category. Furthermore, we will also explain how this monoidal structure is compatible with the restriction structure, which we call a monoidal restriction category. 
%We also observe that dualities and closed structure of the $\S[-]$-construction is closely related to the base category, and vice-versa. 

\begin{proposition}\label{prop:monoidalinS}
 If $\cat{C}$ is a firm monoidal category, then $\S[\cat{C}]$ is a firm monoidal category with monoidal unit $I$ and the monoidal product $\otimes$ defined on objects as $A \otimes B$ and on morphisms as follows: 
 \[
  \Big[s,\;\begin{minipic}
    \node[morphism] (f) at (0,0) {$f$};
    \draw ([xshift=-1mm]f.south west) to +(0,-.125) node[right=-1mm,font=\tiny]{$A$};
    \draw ([xshift=1mm]f.south east) to +(0,-.125) node[right=-1mm,font=\tiny]{$S$};
    \draw (f.north) to +(0,.125) node[right=-1mm,font=\tiny]{$B$};
   \end{minipic}\!\Big] \otimes \Big[t,\;\begin{minipic}
    \node[morphism] (f) at (0,0) {$g$};
    \draw ([xshift=-1mm]f.south west) to +(0,-.125) node[right=-1mm,font=\tiny]{$C$};
    \draw ([xshift=1mm]f.south east) to +(0,-.125) node[right=-1mm,font=\tiny]{$T$};
    \draw (f.north) to +(0,.125) node[right=-1mm,font=\tiny]{$D$};
   \end{minipic}\!\Big] = 
  \bigg[s \wedge t,\;\;\begin{minipic}
   \node[morphism,width=5mm] (f) at (0,0) {$f$};
   \node[morphism,width=5mm] (g) at (1,0) {$g$};
   \draw (f.south west) to +(0,-.3) node[right=-1mm,font=\tiny]{$A$};
   \draw (g.south west) to[out=-90,in=90] ([yshift=-3mm]f.south east) node[right=-1mm,font=\tiny]{$C$};
   \draw[     preaction={draw,white,line width=2pt,-},
     preaction={draw,white,ultra thick, shorten >=-2\pgflinewidth}
] (f.south east) to[out=-90,in=90] ([yshift=-3mm]g.south west) node[right=-1mm,font=\tiny] {$S$};
   \draw (g.south east) to +(0,-.3) node[right=-1mm,font=\tiny] {$T$};
   \draw (f.north) to +(0,.125) node[right=-1mm,font=\tiny]{$B$};
   \draw (g.north) to +(0,.125) node[right=-1mm,font=\tiny]{$D$};
  \end{minipic}\!\bigg]
 \]
 Subunits in $\S[\cat{C}]$ are exactly the maps $\big[1,\,\begin{minipic}
   \draw (.3,-.1) node[dot]{} to (.3,-.3) node[right=-1mm,font=\tiny]{$S$};
  \end{minipic}\!\big] $ for subunits $s\colon S \rightarrowtail I$ in $\cat{C}$. 
 Hence $\ISub(\cat{C}) \cong \ISub(\S[\cat{C}])$ as semilattices. 
\end{proposition}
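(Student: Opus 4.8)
The plan is to transport the monoidal structure of $\cat{C}$ along the map $U\colon\cat{C}\to\S[\cat{C}]$, $f\mapsto[1_I,f\circ\rho]$, which by Proposition~\ref{prop:totalinS} underlies the isomorphism $\cat{C}\cong\T[\S[\cat{C}]]$. First I would check that $\otimes$ as defined is a bifunctor on $\S[\cat{C}]$. Independence of the chosen representatives is immediate: an isomorphism $m$ witnessing $[s,f]=[s\circ m,\,f\circ(A\otimes m)]$ is carried to $m\otimes 1_T$ modulo the shuffle, and similarly in the second variable. Preservation of identities amounts to $[1_I,\rho_A]\otimes[1_I,\rho_B]=[1_I,\rho_{A\otimes B}]$, which holds by coherence in $\cat{C}$ since $1_I$ is the top subunit, so $1_I\wedge 1_I=1_I$. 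The substantive point is the interchange law: both $([t,g]\circ[s,f])\otimes([t',g']\circ[s',f'])$ and $([t,g]\otimes[t',g'])\circ([s,f]\otimes[s',f'])$ have subunit component $(s\wedge t)\wedge(s'\wedge t')=(s\wedge s')\wedge(t\wedge t')$, and their $\cat{C}$-parts are reconciled by first duplicating the wires carrying $S,S',T,T'$ via the comultiplications $S\to S\otimes S$, then sliding the resulting dots past $f,f',g,g'$ and re-merging them using~\eqref{eq:SsissS}. I expect this graphical bookkeeping to be the main obstacle of the proof.

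Next I would supply the coherence data by setting $\lambda^{\S}_A=[1_I,\lambda_A\circ\rho]$, $\rho^{\S}_A=[1_I,\rho_A\circ\rho]$, $\alpha^{\S}=[1_I,\alpha\circ\rho]$, and $\sigma^{\S}=[1_I,\sigma\circ\rho]$. Each is invertible in $\S[\cat{C}]$, since $[1_I,h\circ\rho]$ has inverse $[1_I,h^{-1}\circ\rho]$ whenever $h$ is invertible in $\cat{C}$. Naturality of these isomorphisms against \emph{arbitrary} morphisms $[s,f]$ of $\S[\cat{C}]$, not merely the restriction-total ones, again reduces to naturality of $\lambda,\rho,\alpha,\sigma$ in $\cat{C}$ after sliding the subunit dots with~\eqref{eq:SsissS}, in the same style as the interchange law. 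The triangle, pentagon, and hexagon identities for $\lambda^{\S},\rho^{\S},\alpha^{\S},\sigma^{\S}$ are equalities between composites which, using $[1_I,a]\circ[1_I,b]=[1_I,a\circ b]$ together with the definition of $\otimes$ on morphisms, unfold into the corresponding coherence identities of $\cat{C}$, hence hold. This shows $\S[\cat{C}]$ is a braided monoidal category with unit $I$, and that $U$ is a strict monoidal embedding.

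I would then identify the subunits. For a subunit $s\colon S\rightarrowtail I$ of $\cat{C}$, the morphism $[1_I,s\circ\rho_S]\colon S\to I$ is monic in $\S[\cat{C}]$: composing it with $[v,g]\colon X\to S$ yields a class whose $\cat{C}$-part is $s$ composed with a morphism determined by $g$, and since $s$ is monic this is monic in $\S[\cat{C}]$; moreover $[1_I,s\circ\rho_S]\otimes S$ equals, up to the coherence isomorphisms above, $[1_I,(s\otimes S)\circ\rho]$, which is invertible because $s\otimes S$ is. Conversely, let $[u,f]\colon S'\to I$ be a subunit of $\S[\cat{C}]$. Any isomorphism in a restriction category is restriction-total, since $f\circ\rest{f}=f$ gives $\rest{f}=f^{-1}\circ f\circ\rest{f}=1$, hence has the form $[1_I,-]$ by Proposition~\ref{prop:totalinS}; as $[u,f]\otimes S'$ has subunit component $u$ and is required to be invertible, $u=1_I$. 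Writing $\widetilde f\colon S'\to I$ for the transpose of $f\colon S'\otimes I\to I$, monicity of $[1_I,f]$ forces $\widetilde f$ monic in $\cat{C}$, and invertibility of $[1_I,f]\otimes S'$ forces $\widetilde f\otimes S'$ invertible, so $\widetilde f\colon S'\rightarrowtail I$ is a subunit of $\cat{C}$ and $[u,f]=[1_I,\widetilde f\circ\rho]$. This is the claimed description. Firmness of $\S[\cat{C}]$ then follows: for subunits $[1_I,s\circ\rho_S]$ and $[1_I,t\circ\rho_T]$, the morphism $[1_I,s\circ\rho_S]\otimes T$ is, up to coherence, $[1_I,(s\otimes T)\circ\rho]$, which is monic because $s\otimes T$ is monic by firmness of $\cat{C}$.

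Finally I would assemble the semilattice isomorphism. Define $\Phi\colon\ISub(\cat{C})\to\ISub(\S[\cat{C}])$ by $\Phi(s)=[1_I,s\circ\rho_S]$; by the previous paragraph it is a bijection. It preserves the top element, since the top subunit of $\cat{C}$ is $1_I$ and $\Phi(1_I)=[1_I,\rho_I]=1_I$, the identity on $I$ and the top subunit of $\S[\cat{C}]$. It preserves meets: since the meet of subunits is computed as $\lambda\circ(s\otimes t)$ both in $\cat{C}$ and in $\S[\cat{C}]$, the meet of $\Phi(s)$ and $\Phi(t)$ equals $\lambda^{\S}\circ(\Phi(s)\otimes\Phi(t))$, which unfolds by coherence to $[1_I,(\lambda\circ(s\otimes t))\circ\rho]=\Phi(s\wedge t)$. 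Hence $\Phi$ is an isomorphism of semilattices and $\ISub(\cat{C})\cong\ISub(\S[\cat{C}])$.
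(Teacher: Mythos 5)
Your proposal is correct and follows essentially the same route as the paper: transport the coherence isomorphisms of $\cat{C}$ along the total-map embedding, verify interchange by rearranging the subunit wires, identify the subunits of $\S[\cat{C}]$ by noting that subunits (being monic, hence restriction-total, and with invertible tensor against themselves) must have trivial subunit component and hence come from subunits of $\cat{C}$, and deduce firmness and the semilattice isomorphism from firmness of $\cat{C}$. The only cosmetic difference is that your interchange argument invokes comultiplications $S \to S \otimes S$, which are not actually needed there (each subunit wire occurs exactly once on both sides, so a braiding shuffle suffices, as in the paper's displayed equation); this does not affect correctness.
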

\begin{proof}
 The coherence isomorphisms $\alpha$, $\rho$, $\lambda$, and $\sigma$ of $\cat{C}$ induce coherence isomorphisms $[\id[I],\alpha]$, $[\id[I], \rho]$, $[\id[I], \lambda]$, and $[\id[I], \sigma]$ for $\S[\cat{C}]$, in such a way that the triangle, pentagon, and hexagon equations are satisfied. In particular, we note that the interchange law comes down to:
 \[
  \left[s \wedge s' \wedge t \wedge t',\;\;\begin{minipic}
   \node[morphism,width=4mm] (f) at (0,0) {$f$};
   \node[morphism,width=4mm,anchor=south west] (f') at ([xshift=1mm,yshift=1.5mm]f.north) {$f'$};
   \draw (f.north) to ([xshift=-1mm]f'.south west);
   \draw (f.south west) to +(0,-.5);
   \draw (f'.north) to +(0,.15);
   \node[morphism,width=4mm] (g) at (1.2,0) {$g$};
   \node[morphism,width=4mm,anchor=south west] (g') at ([xshift=1mm,yshift=1.5mm]g.north) {$g'$};
   \draw (g.north) to ([xshift=-1mm]g'.south west);
   \draw (g.south west) to[out=-90,in=90] +(-.9,-.5);
   \draw (g.south east) to +(0,-.5) node[right=-1mm,font=\tiny]{$T$};
   \draw ([xshift=1mm]g'.south east) to +(0,-.95) node[right=-1mm,font=\tiny]{$T'$};
   \draw (g'.north) to +(0,.15);
   \draw[halo] (f.south east) to[out=-90,in=90] +(.4,-.5) node[right=-1mm,font=\tiny]{$S$};
   \draw[halo] ([xshift=1mm]f'.south east) to +(0,-.5) to[out=-90,in=90] +(.4,-.95) node[right=-1mm,font=\tiny]{$S'$};
  \end{minipic}\!\!\right]
  \;=\; 
  \left[s \wedge t \wedge s' \wedge t',\;\;\begin{minipic}
   \node[morphism,width=4mm] (f) at (0,0) {$f$};
   \node[morphism,width=4mm,anchor=south west] (f') at ([xshift=1mm,yshift=3mm]f.north) {$f'$};
   \draw (f.north) to ([xshift=-1mm]f'.south west);
   \draw (f.south west) to +(0,-.3);
   \draw (f'.north) to +(0,.15);
   \node[morphism,width=4mm] (g) at (.9,0) {$g$};
   \node[morphism,width=4mm,anchor=south west] (g') at ([xshift=1mm,yshift=3mm]g.north) {$g'$};
   \draw (g.north) to ([xshift=-1mm]g'.south west);
   \draw (g.south west) to[out=-90,in=90] +(-.55,-.3);
   \draw (g.south east) to +(0,-.3) node[right=-1mm,font=\tiny]{$T$};
   \draw ([xshift=1mm]g'.south east) to[out=-90,in=90,looseness=.8] +(.4,-.4) to +(.4,-.9) node[right=-1mm,font=\tiny]{$T'$};
   \draw (g'.north) to +(0,.15);
   \draw[halo] (f'.south east) to[out=-90,in=90,looseness=.5] +(1,-.3) to +(1,-.9) node[right=-1mm,font=\tiny]{$S'$};
   \draw[halo] (f.south east) to[out=-90,in=90] +(.55,-.3) node[right=-1mm,font=\tiny]{$S$};
  \end{minipic}\!\!\right]
 \]
 It is clear that any subunit $s$ in $\cat{C}$ induces a subunit $\big[1,\,\begin{minipic}
   \draw (.3,-.1) node[dot]{} to (.3,-.3) node[right=-1mm,font=\tiny]{$S$};
  \end{minipic}\!\big]$ in $\S[\cat{C}]$. Conversely, because monomorphisms are total \cite[Lemma 2.2.i]{cockettlack:restrictioncategories}, a subunit in $\S[\cat{C}]$ is of the form $\big[1,\;\begin{minipic}[baseline={(0,.05)}]
    \node[morphism] (f) at (0,0) {$f$};
    \draw (f.south) to +(0,-.125) node[right=-1mm,font=\tiny]{$A$};
   \end{minipic}\big]$. Since by Proposition \ref{prop:totalinS}, we have the isomorphism $\cat{C} \cong \T\left[\S[\cat{C}]\right]$, it follows that $f \circ \rho^{-1}$ must also be monic in $\cat{C}$, and that $f \otimes A \otimes I$ is invertible in $\cat{C}$. Therefore $f \circ \rho^{-1}$ represents a subunit $A \to I$ in $\cat{C}$. So we conclude that $\ISub(\cat{C}) \cong \ISub(\S[\cat{C}])$. Finally, if $\big[1,\,\begin{minipic}
   \draw (.3,-.1) node[dot]{} to (.3,-.3) node[right=-1mm,font=\tiny]{$S$};
  \end{minipic}\!\big]$ and $\big[1,\,\begin{minipic}
   \draw (.3,-.1) node[dot]{} to (.3,-.3) node[right=-1mm,font=\tiny]{$T$};
  \end{minipic}\!\big]$ are subunits in $\S[\cat{C}]$, then by construction so is their tensor product. Hence $\S[\cat{C}]$ is firm. 
\end{proof}

The monoidal structure of $\S[\cat{C}]$ is compatible with the restriction structure.

\begin{definition}
 A \emph{monoidal restriction category} %\footnote{To knowledge of the authors, monoidal restriction have not previously been defined in this generality} 
 is a monoidal category $\cat{X}$ that is also a restriction category where:
 \[\rest{f \otimes g} = \rest{f} \otimes \rest{g}\] 
 A \emph{firm restriction category} is a monoidal restriction category whose underlying monoidal category is firm. 
\end{definition}

Because isomorphisms are always total~\cite[Lemma 2.2.i]{cockettlack:restrictioncategories}, the coherence isomorphisms in a monoidal restriction category are always total. 

\begin{example} \normalfont 
  Every cartesian restriction category~\cite[Definition~2.16]{cockettcruttwellgallagher:differentialrestriction} is a monoidal restriction category, just as every cartesian category is a monoidal category. So in particular, $\cat{Par}$ is a monoidal restriction category with monoidal structure given by the cartesian product of sets, and so $\rest{f \times g} = \rest{f} \times \rest{g}$. Furthermore, $\cat{Par}$ is a firm monoidal restriction category where the subunits are (up to isomorphism) the singleton $\{ \ast \}$ and the empty set $\emptyset$.  
\end{example} 

\begin{corollary} 
 If $\cat{C}$ is a firm monoidal category, then $\S[\cat{C}]$ is a firm restriction category. Furthermore, the isomorphism $\cat{C} \simeq \T[\S[\cat{C}]]$ of Proposition~\ref{prop:totalinS} is an isomorphism of firm monoidal categories. 
\end{corollary}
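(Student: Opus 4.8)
The plan is to read the statement off from the propositions already established. By Proposition~\ref{prop:SCrestriction} the category $\S[\cat{C}]$ is a restriction category, and by Proposition~\ref{prop:monoidalinS} it is a firm monoidal category; so, to see that it is a firm restriction category, the only thing left to verify is the compatibility axiom $\rest{f \otimes g} = \rest{f} \otimes \rest{g}$. Writing $f = [s,f]\colon A\to B$ and $g = [t,g]\colon C\to D$, I would argue that both sides coincide with the restriction idempotent on $A\otimes C$ that corresponds, under the semilattice isomorphism $\cO(A\otimes C)\cong\ISub(\cat{C})$ of Proposition~\ref{prop:restrictionidempotentsinS}, to the subunit $s\wedge t$. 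For the left-hand side this is immediate: $[s,f]\otimes[t,g]$ has underlying subunit $s\wedge t$ by Proposition~\ref{prop:monoidalinS}, so the restriction formula of Proposition~\ref{prop:SCrestriction} identifies $\rest{[s,f]\otimes[t,g]}$ with the canonical restriction idempotent for $s\wedge t$. For the right-hand side, $\rest{[s,f]}\otimes\rest{[t,g]}$ is obtained by tensoring two ``discard'' morphisms, and unfolding the tensor formula of Proposition~\ref{prop:monoidalinS} shows that their interleaving is again a discard morphism, namely the one built from the counit $\lambda\circ(s\otimes t)=s\wedge t$ of the subunit $S\otimes T$; hence it is the same canonical restriction idempotent. (Equivalently, one checks the identity directly by a short string-diagram calculation.)

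For the second assertion I would first observe that in any monoidal restriction category the restriction-total morphisms form a monoidal subcategory: they contain the identities, they contain the coherence isomorphisms since isomorphisms are total~\cite[Lemma~2.2.i]{cockettlack:restrictioncategories}, and they are closed under $\otimes$ because $\rest{f\otimes g}=\rest{f}\otimes\rest{g}=\id\otimes\id$ whenever $f$ and $g$ are total. So $\T[\S[\cat{C}]]$ carries a monoidal structure with unit $I$ and coherence isomorphisms $[\id[I],\alpha]$, $[\id[I],\lambda]$, $[\id[I],\rho]$, $[\id[I],\sigma]$.

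Then I would promote the isomorphism of categories $\cat{C}\cong\T[\S[\cat{C}]]$ of Proposition~\ref{prop:totalinS}, which is the identity on objects and sends $f\colon A\to B$ to $[1,f]$, to an isomorphism of (braided) monoidal categories. It is strictly monoidal on objects, and it sends $\alpha$, $\lambda$, $\rho$, $\sigma$ in $\cat{C}$ to the coherence isomorphisms $[\id[I],\alpha]$, $[\id[I],\lambda]$, $[\id[I],\rho]$, $[\id[I],\sigma]$ of $\T[\S[\cat{C}]]$ displayed above. The remaining point is the identity $F(f\otimes g)=F(f)\otimes F(g)$: expanding the tensor formula of Proposition~\ref{prop:monoidalinS} with both subunits equal to $1$, the only morphisms that appear are $f$, $g$, and coherence and braiding isomorphisms whose non-unit legs are tensor factors of $I$, so the equality is an instance of the coherence theorem for braided monoidal categories. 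As $F$ is already an isomorphism of categories, this makes it an isomorphism of monoidal categories. Finally, $\T[\S[\cat{C}]]$ is firm because it is monoidally isomorphic to the firm category $\cat{C}$ and firmness is a property of the monoidal structure alone; alternatively, every subunit of $\S[\cat{C}]$ is restriction-total by Proposition~\ref{prop:monoidalinS}, so $\ISub(\T[\S[\cat{C}]])=\ISub(\S[\cat{C}])$ and closure under $\otimes$ is inherited.

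The only step needing genuine care is the verification that $F(f\otimes g)=F(f)\otimes F(g)$, where one must keep track of the coherence and braiding isomorphisms on the $I$-components that the $\S[-]$-construction's tensor formula introduces; everything else is a direct appeal to the propositions above.
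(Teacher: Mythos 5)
Your proposal is correct and follows the same route as the paper, which simply asserts that $\rest{f\otimes g}=\rest{f}\otimes\rest{g}$ is clear from the tensor formula of Proposition~\ref{prop:monoidalinS} and that the isomorphism $\cat{C}\cong\T[\S[\cat{C}]]$ is evidently strong monoidal and subunit-preserving. You have merely filled in the details the paper leaves implicit (identifying both sides of the compatibility axiom as the canonical restriction idempotent for $s\wedge t$ via Proposition~\ref{prop:restrictionidempotentsinS}, and tracking the coherence isomorphisms in $F(f\otimes g)=F(f)\otimes F(g)$), and these details check out.
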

\begin{proof} It is clear from the monoidal product defined in Propostion~\ref{prop:monoidalinS} that:
\[\rest{\Big[s,\;\begin{minipic}
    \node[morphism] (f) at (0,0) {$f$};
    \draw ([xshift=-1mm]f.south west) to +(0,-.2) node[right=-1mm,font=\tiny]{$A$};
    \draw ([xshift=1mm]f.south east) to +(0,-.2) node[right=-1mm,font=\tiny]{$S$};
    \draw (f.north) to +(0,.15) node[right=-1mm,font=\tiny]{$B$};
   \end{minipic}\!\Big] \otimes \Big[t,\;\begin{minipic}
    \node[morphism] (f) at (0,0) {$g$};
    \draw ([xshift=-1mm]f.south west) to +(0,-.2) node[right=-1mm,font=\tiny]{$C$};
    \draw ([xshift=1mm]f.south east) to +(0,-.2) node[right=-1mm,font=\tiny]{$T$};
    \draw (f.north) to +(0,.15) node[right=-1mm,font=\tiny]{$D$};
   \end{minipic}\!\Big]} = \rest{\Big[s,\;\begin{minipic}
    \node[morphism] (f) at (0,0) {$f$};
    \draw ([xshift=-1mm]f.south west) to +(0,-.2) node[right=-1mm,font=\tiny]{$A$};
    \draw ([xshift=1mm]f.south east) to +(0,-.2) node[right=-1mm,font=\tiny]{$S$};
    \draw (f.north) to +(0,.15) node[right=-1mm,font=\tiny]{$B$};
   \end{minipic}\!\Big]} \otimes \rest{ \Big[t,\;\begin{minipic}
    \node[morphism] (f) at (0,0) {$g$};
    \draw ([xshift=-1mm]f.south west) to +(0,-.2) node[right=-1mm,font=\tiny]{$C$};
    \draw ([xshift=1mm]f.south east) to +(0,-.2) node[right=-1mm,font=\tiny]{$T$};
    \draw (f.north) to +(0,.15) node[right=-1mm,font=\tiny]{$D$};
   \end{minipic}\!\Big]}\]
 So $\S[\cat{C}]$ is a firm restriction category. It is clear that the isomorphism $\cat{C} \to \T[\S[\cat{C}]]$ of Propostion~\ref{prop:monoidalinS} is (strong) monoidal and preserves subunits. 
\end{proof}

We conclude this section by observing that having dual objects or being closed in the $\S[-]$-construction is closely related to the same properties of the base category, and vice-versa. We denote dual objects in a monoidal category $\cat{C}$ as $A \dashv A^*$ (see~\cite[Chapter 3]{heunenvicary:cqm} for the full definition), and if $\cat{C}$ is closed, we write $\multimap$ for internal homs.

\begin{lemma}
 $A \dashv A^*$ is a duality in $\cat{C}$ if and only if $A \dashv A^*$ is a duality in $\S[\cat{C}]$.
\end{lemma}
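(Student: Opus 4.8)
The plan is to exploit the inclusion $\iota\colon\cat{C}\to\S[\cat{C}]$ given by $f\mapsto[1,f]$, which by Proposition~\ref{prop:totalinS} and the subsequent corollary is strong monoidal and corestricts to an isomorphism of monoidal categories $\cat{C}\cong\T[\S[\cat{C}]]$. In particular $\iota$ is full, faithful, strong monoidal, and its image --- the restriction-total morphisms --- is a wide monoidal subcategory of $\S[\cat{C}]$, closed under the tensor products and composites used to form snake composites.

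The ``only if'' direction is then the standard fact that a strong monoidal functor preserves duals: if $\eta\colon I\to A^*\otimes A$ and $\varepsilon\colon A\otimes A^*\to I$ witness $A\dashv A^*$ in $\cat{C}$, then $\iota(\eta)$ and $\iota(\varepsilon)$ witness $A\dashv A^*$ in $\S[\cat{C}]$, because $\iota$ sends identities to identities and carries each of the two snake composites of $\cat{C}$ to the corresponding composite in $\S[\cat{C}]$.

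For the ``if'' direction, suppose $u=[s,\eta]\colon I\to A^*\otimes A$ and $c=[t,\varepsilon]\colon A\otimes A^*\to I$ witness $A\dashv A^*$ in $\S[\cat{C}]$. I would first show that $u$ and $c$ are restriction-total. Consider the snake identity $(c\otimes\id[A])\circ(\id[A]\otimes u)=\id[A]$, noting that the coherence isomorphisms needed to state it are restriction-total and hence carry subunit label $1$. By the tensor formula of Proposition~\ref{prop:monoidalinS}, tensoring with the total map $\id[A]=[1,\id[A]]$ leaves the subunit label unchanged, so $\id[A]\otimes u$ carries label $s$ and $c\otimes\id[A]$ carries label $t$; by the composition rule their composite carries label $s\wedge t$. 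Since $\id[A]$ carries label $1$, this gives $s\wedge t=1$ in $\ISub(\cat{C})$, and because $1$ is the top element it forces $s=t=1$. Hence $u$ and $c$ lie in the image of $\iota$, say $u=\iota(\eta')$ and $c=\iota(\varepsilon')$ for morphisms $\eta'\colon I\to A^*\otimes A$ and $\varepsilon'\colon A\otimes A^*\to I$ of $\cat{C}$.

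It then remains to transport the two triangle identities back along $\iota$. Every morphism appearing in the snake composites built from $\eta'$ and $\varepsilon'$ --- the unit, the counit, identities, and coherence isomorphisms --- is restriction-total, hence lies in the image of $\iota$, and that image is closed under the relevant tensors and composites; since $\iota$ is strong monoidal it sends these $\cat{C}$-composites to the $\S[\cat{C}]$-composites for $u$ and $c$, which equal the appropriate identities, and faithfulness of $\iota$ then yields the triangle identities for $\eta'$ and $\varepsilon'$ in $\cat{C}$. The only genuinely delicate point is the totality step: one must be sure the coherence isomorphisms contribute trivial subunit labels (they do, being isomorphisms and hence restriction-total) and that the label of the snake composite is exactly $s\wedge t$; granting this, everything else is formal bookkeeping with $\iota$.
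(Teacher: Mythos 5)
Your proposal is correct and follows essentially the same route as the paper: the forward direction via preservation of duals by the strong monoidal functor $f \mapsto [1,f]$, and the converse by computing the subunit label of a snake composite as $s \wedge t$, equating it with the label $1$ of the identity to force $s = t = 1$, and then transporting the triangle identities back along the full and faithful inclusion. The paper's proof is just a terser version of the same bookkeeping.
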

\begin{proof}
 The functor $\cat{C} \to \S[\cat{C}]$ given by $A \mapsto A$ on objects and $f \mapsto [1,f]$ on morphisms is (strong) monoidal, and monoidal functors preserve dual objects~\cite[Theorem~3.14]{heunenvicary:cqm}. So if $A \dashv A^*$ in $\cat{C}$, then also $A \dashv A^*$ in $\S[\cat{C}]$. Conversely, suppose that $A \dashv A^*$ in $\S[\cat{C}]$. Then there are $[s,\eta] \colon I \to A^* \otimes A$ and $[t,\varepsilon] \colon A \otimes A^* \to I$ in $\S[\cat{C}]$ satisfying:
 \[
  [\id,A] 
  = \Big[t, \,\begin{minipic}
    % A \otimes \varepsilon
    \node[morphism,width=6mm] (eta) at (0,0) {$\varepsilon$};
    \draw (eta.south) to +(0,-.2) node[right=-1mm]{$A^*$};
    \draw ([xshift=-1mm]eta.south west) to +(0,-.2) node[right=-1mm]{$A$};
    \draw ([xshift=1mm]eta.south east) to[out=-90,in=90] +(.4,-.2) node[right=-1mm]{$T$};
    \draw (.35,-.35) node[right=-1mm]{$A$} to[out=90,in=-90] (.75,-.1) to (.75,.3) node[right=-1mm]{$A$};
   \end{minipic}\Big] 
   \circ 
   \Big[s, \,\begin{minipic}
    % \eta \otimes A
    \node[morphism,width=4mm] (eta) at (0,0) {$\eta$};
    \draw ([xshift=-.5mm]eta.north west) to +(0,.2) node[right=-1mm]{$A^*$};
    \draw ([xshift=.5mm]eta.north east) to +(0,.2) node[right=-1mm]{$A$};
    \draw (eta.south) to +(0,-.2) node[right=-1mm]{$S$};
    \draw (-.6,-.35) node[right=-1mm]{$A$} to (-.6,.35) node[right=-1mm]{$A$};
    \end{minipic}\Big]
  = \Bigg[s\wedge t, \begin{minipic}
   \node[morphism,width=10mm] (epsilon) at (0,1) {$\varepsilon$};
   \node[morphism,width=5mm,anchor=north west] (eta) at ([yshift=-4mm]epsilon.south) {$\eta$};
   \draw (epsilon.south west) to +(0,-.9) node[right=-1mm]{$A$};
   \draw (eta.north west) to node[right=-1mm]{$A^*$} (epsilon.south);
   \draw (eta.south) to +(0,-.2) node[right=-1mm]{$S$};
   \draw (eta.north east) to[out=90,in=-90] +(.5,.5) to +(.5,.8) node[right=-1mm]{$A$};
   \draw (epsilon.south east) to[out=-90,in=90] +(.5,-.5) to +(.5,-.9) node[right=-1mm]{$T$};
  \end{minipic}\Bigg]
 \]
 But then $s=t=1$, and $\eta$ and $\varepsilon$ witness $A \dashv A^*$ in $\cat{C}$.
\end{proof}

\begin{lemma}
 A firm monoidal category $\cat{C}$ is closed if and only if $\S[\cat{C}]$ is closed.
\end{lemma}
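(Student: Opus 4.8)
The plan is to prove the two implications separately.

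\emph{From $\cat{C}$ closed to $\S[\cat{C}]$ closed.} Suppose $\cat{C}$ has internal homs $\multimap$. I would show that the very same objects serve as internal homs in $\S[\cat{C}]$, by exhibiting a bijection $\S[\cat{C}](X\otimes A,Y)\cong\S[\cat{C}](X,A\multimap Y)$ natural in $X$. A morphism $[s,f]\colon X\otimes A\to Y$ consists of a subunit $s\colon S\rightarrowtail I$ and a map $f\colon (X\otimes A)\otimes S\to Y$ in $\cat{C}$; using the associators and braiding of $\cat{C}$ to rewrite the domain as $(X\otimes S)\otimes A$ and then transposing along $-\otimes A\dashv A\multimap-$ in $\cat{C}$ produces $\widetilde f\colon X\otimes S\to A\multimap Y$, hence $[s,\widetilde f]\colon X\to A\multimap Y$ in $\S[\cat{C}]$. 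This is well defined on the equivalence classes defining $\S[\cat{C}]$ (reparametrising $S$ corresponds on both sides, by naturality of transposition), visibly bijective, and natural in $X$. The naturality check is the only mildly tedious point; it reduces to naturality of transposition in $\cat{C}$ together with coherence of the rearranging isomorphisms, and can alternatively be packaged as: $\S[-]$ transports the adjunction $-\otimes A\dashv A\multimap-$ from $\cat{C}$ to $\S[\cat{C}]$, once one checks that the unit and counit are sent to $\S[\cat{C}]$-morphisms satisfying the triangle identities.

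\emph{From $\S[\cat{C}]$ closed to $\cat{C}$ closed.} Suppose $\S[\cat{C}]$ is closed, with, for each object $A$, a right adjoint $[A,-]$ of $-\otimes A$ and counit $\varepsilon_Y\colon[A,Y]\otimes A\to Y$. Since $\cat{C}\cong\T[\S[\cat{C}]]$ by Proposition~\ref{prop:totalinS}, it suffices to show that $[A,-]$ restricts to a right adjoint of $-\otimes A$ on restriction-total morphisms. The key step is that each $\varepsilon_Y$ is restriction-total. To see this, recall from Propositions~\ref{prop:SCrestriction} and~\ref{prop:restrictionidempotentsinS} that every restriction idempotent on a tensor $M\otimes A$ in $\S[\cat{C}]$ has the form $e\otimes\id[A]$ for a unique restriction idempotent $e$ on $M$; write $\rest{\varepsilon_Y}=e\otimes\id[A]$. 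The restriction law $\varepsilon_Y\circ\rest{\varepsilon_Y}=\varepsilon_Y$ then reads $\varepsilon_Y\circ(e\otimes\id[A])=\varepsilon_Y$, and transposing along the adjunction—using naturality of transposition in the first variable, which turns precomposition by $e\otimes\id[A]$ into precomposition by $e$—gives $1_{[A,Y]}\circ e=1_{[A,Y]}$, so $e=1_{[A,Y]}$ and hence $\rest{\varepsilon_Y}$ is the identity; that is, $\varepsilon_Y$ is restriction-total, so it is the image of a morphism of $\cat{C}$.

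With the counits total, the remainder is bookkeeping, using that in $\S[\cat{C}]$ the subunit underlying a composite, or a tensor, of two morphisms is the meet of their underlying subunits—so a composite or tensor is restriction-total exactly when both factors are. For $h\colon X\otimes A\to Y$ in $\S[\cat{C}]$ with transpose $\widehat h\colon X\to[A,Y]$ we have $h=\varepsilon_Y\circ(\widehat h\otimes\id[A])$, and since $\varepsilon_Y$ is total, $h$ is restriction-total if and only if $\widehat h$ is. Thus the natural bijection $\S[\cat{C}](X\otimes A,Y)\cong\S[\cat{C}](X,[A,Y])$ restricts to a natural bijection $\cat{C}(X\otimes A,Y)\cong\cat{C}(X,[A,Y])$, which exhibits $\cat{C}$ as closed.

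I expect the main obstacle to be precisely the argument that each counit $\varepsilon_Y$ is restriction-total: a priori a monoidal-closed structure on $\S[\cat{C}]$ need not respect the restriction structure at all, and this step—forcing the evaluation maps back into $\cat{C}$—is what makes the converse direction go through. The naturality verification in the first direction is routine but somewhat lengthy, and I would handle it either by a direct diagram chase or via the adjunction-transport packaging mentioned above.
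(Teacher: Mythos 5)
Your proof is correct, and the forward direction is essentially the paper's: both transport the adjunction $(-)\otimes A \dashv A \multimap (-)$ along the hom-set description of $\S[\cat{C}]$, pairing the subunit $s$ with the $\cat{C}$-transpose of $f \colon X \otimes A \otimes S \to Y$ (the paper writes the bijection in the opposite direction, via the counit, but it is the same map, and it likewise leaves the naturality check implicit). The converse is where you genuinely diverge. The paper takes the unit $[s,\eta]$ and counit $[t,\varepsilon]$ of the putative adjunction in $\S[\cat{C}]$, plugs them into a triangle identity, and reads off $s \wedge t = 1$, hence $s = t = 1$, from the fact that the subunit underlying a composite is the meet of the underlying subunits and the identity carries the top subunit; the unit and counit are then restriction-total and witness the adjunction in $\cat{C}$ directly. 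You never touch the unit: you show the counit is restriction-total by transposing the restriction law $\varepsilon_Y \circ \rest{\varepsilon_Y} = \varepsilon_Y$, using that restriction idempotents on $M \otimes A$ in $\S[\cat{C}]$ all have the form $e \otimes \id[A]$, and you then descend the entire hom-set bijection to the restriction-total maps via $h = \varepsilon_Y \circ (\widehat{h} \otimes \id[A])$. Both arguments ultimately rest on the same structural facts about $\S[\cat{C}]$, but yours is somewhat longer while proving something a little stronger and more portable: any closed structure on $\S[\cat{C}]$ automatically has restriction-total evaluation maps, with no appeal to the unit --- an observation in the spirit of the paper's axioms (TR1)--(TR7), which insist that restriction-theoretic and tensor-topological notions agree. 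The paper's version is the shorter one-liner.
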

\begin{proof}
 Suppose $\cat{C}$ is closed.
 If $\varepsilon \colon (A \multimap B) \otimes A \to B$ is the counit
 % and $\eta \colon A \to B \multimap (A \otimes B)$ the unit 
 of the adjunction $(-) \otimes A \dashv A \multimap (-)$ in $\cat{C}$, then 
 \begin{align*}
   \S[\cat{C}](A, B \multimap C) & \to \S[\cat{C}](A \otimes B,C) \\
   \Big[s,\;\begin{minipic}
    \node[morphism] (f) at (0,0) {$f$};
    \draw ([xshift=-1mm]f.south west) to +(0,-.125) node[right=-1mm,font=\tiny]{$A$};
    \draw ([xshift=1mm]f.south east) to +(0,-.125) node[right=-1mm,font=\tiny]{$S$};
    \draw (f.north) to +(0,.125) node[right=-1mm,font=\tiny]{$B$};
   \end{minipic}\!\Big] & \mapsto 
   \Bigg[s,\,\begin{minipic}
   % \varepsilon \circ (f \otimes B) \circ (A \otimes \sigma_{B,S})
    \node[morphism,width=4mm] (epsilon) at (0,.5) {$\varepsilon$};
    \node[morphism,width=4mm,anchor=north] (f) at ([xshift=-1mm,yshift=-3mm]epsilon.south west) {$f$};
    \draw (f.north) to node[left=-1mm]{$B \multimap C$} ([xshift=-1mm]epsilon.south west);
    \draw (epsilon.north) to +(0,.125) node[right=-1mm]{$C$};
    \draw ([xshift=-1mm]f.south west) to +(0,-.3) node[right=-1mm]{$A$};
    \draw ([xshift=1mm]f.south east) to[out=-90,in=90] +(.3,-.3) node[right=-1mm]{$S$};
    \draw ([xshift=1mm]epsilon.south east) to +(0,-.5) to[out=-90,in=90] +(-.3,-.9) node[right=-1mm]{$B$};
   \end{minipic}\Bigg]
 \end{align*}
 is a well-defined natural bijection. Hence $\S[\cat{C}]$ is closed. Conversely, suppose that $\S[\cat{C}]$ is closed. Then there are morphisms:
 \begin{align*}
[s,\eta_B] \colon B \to A \multimap (B \otimes A) && [t,\varepsilon_B] \colon (A \multimap B) \otimes A \to B
\end{align*}
satisfying: $[t, \varepsilon_{A \otimes B}] \circ [s, \eta_B \otimes A] = [\id[I], A \otimes B]$. But then $s \wedge t = 1$ so $s=t=1$, and so $\eta$ and $\varepsilon$ are the unit and counit of an adjunction $(-) \otimes A \dashv A \multimap (-)$ in $\cat{C}$.
\end{proof}

\section{Tensor-restriction categories}\label{sec:tensorrestrictioncats}

The goal of this section is to axiomatise the restriction categories of the form $\S[\cat{C}]$ for some firm monoidal category $\cat{C}$. We call such restriction categories \emph{tensor-restriction categories}. Explicitly, we will show that $\cat{X} \simeq \S[\T[\cat{X}]]$ for a tensor-restriction category $\cat{X}$. In fact, the category of firm monoidal categories is equivalent to the category of tensor-restriction categories.

A key concept for this section is the notion of restriction in the sense of~\cite[Section 4]{enriquemolinerheunentull:tensortopology}, which can be defined in arbitrary monoidal categories.

\begin{definition}\label{def:restricttosubunit}\cite[Definition 4.1]{enriquemolinerheunentull:tensortopology}
 In a monoidal category $\cat{C}$, a morphism ${f \colon A \to B}$ \emph{tensor-restricts} to a subunit $s \colon S \rightarrowtail I$ if it factors via $B \otimes s$ in the sense that the following diagram commutes: 
 \[\begin{pic}
  \node (tl) at (0,1) {$A$};
  \node (tr) at (3,1) {$B$};
  \node (bl) at (0,0) {$B \otimes S$};
  \node (br) at (3,0) {$B \otimes I$};
  \draw[->] (tl) to node[above]{$f$} (tr);
  \draw[->,dashed] (tl) to (bl);
  \draw[->] (bl) to node[below]{$B \otimes s$} (br);
  \draw[->] (br) to node[right]{$\rho$} (tr);
 \end{pic}\]  
\end{definition}
If a morphism $f \colon A \to B$ restricts to $s$, then the map $A \to B \otimes S$ is not necessarily unique. A special case is when identity morphisms restrict to subunits.

\begin{proposition}\label{prop:identityrestrict} \cite[Proposition 4.2]{enriquemolinerheunentull:tensortopology}
 An identity morphism $\id[A] \colon A \to A$ in a monoidal category restricts to a subunit $s \colon S \rightarrowtail I$ if and only if $\rho \circ (A \otimes s)$ is an isomorphism $A \otimes S \simeq A$. 
\end{proposition}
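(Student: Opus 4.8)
The statement is a biconditional, and the two directions are of very different character; the \emph{if} direction is immediate and the \emph{only if} direction is where the subunit structure does the work. For the \emph{if} direction: if $n:=\rho\circ(A\otimes s)\colon A\otimes S\to A$ is an isomorphism, then its inverse $h:=n^{-1}\colon A\to A\otimes S$ makes the square of Definition~\ref{def:restricttosubunit} commute when we take $B:=A$ and let $h$ be the dashed arrow, since $\rho\circ(A\otimes s)\circ h=n\circ n^{-1}=\id[A]$. Hence $\id[A]$ tensor-restricts to $s$.

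For the \emph{only if} direction, suppose $\id[A]$ tensor-restricts to $s$, witnessed by some $g\colon A\to A\otimes S$ with $n\circ g=\id[A]$, where $n=\rho\circ(A\otimes s)$. Since $g$ is already a section of $n$, it suffices to show that $g$ is also a retraction, i.e.\ that $g\circ n=\id[A\otimes S]$; then $n$ is an isomorphism with inverse $g$, as required. I would do this in three moves. First, observe that $n$ is the component at $A$ of a natural transformation: writing $n_X:=\rho_X\circ(X\otimes s)\colon X\otimes S\to X$, naturality of $\rho$ together with functoriality of $\otimes$ gives $n_Y\circ(f\otimes S)=f\circ n_X$ for every $f\colon X\to Y$, and taking $f:=g$ yields $g\circ n=n_{A\otimes S}\circ(g\otimes S)$. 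Second, identify $n_{A\otimes S}$ with $n_A\otimes\id[S]$: both are morphisms $(A\otimes S)\otimes S\to A\otimes S$, and a short computation using the triangle axiom and naturality of $\alpha$ rewrites them as $\big(A\otimes(\rho_S\circ(S\otimes s))\big)\circ\alpha$ and $\big(A\otimes(\lambda_S\circ(s\otimes S))\big)\circ\alpha$ respectively, which coincide by Lemma~\ref{lem:SsissS}. (Graphically, this step is exactly equation~\eqref{eq:SsissS} with a passive $A$-strand adjoined: it does not matter which of the two $S$-strands carries the $s$-dot.) Third, combine: $g\circ n=n_{A\otimes S}\circ(g\otimes S)=(n_A\otimes S)\circ(g\otimes S)=(n_A\circ g)\otimes\id[S]=\id[A]\otimes\id[S]=\id[A\otimes S]$.

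The substantive input is Lemma~\ref{lem:SsissS}, invoked in the second move; this is the only place the subunit hypothesis is used, everything else being naturality and coherence. Accordingly, the one fiddly point I anticipate is precisely that coherence bookkeeping: pinning down $n_{A\otimes S}=n_A\otimes\id[S]$ on the nose rather than merely up to the evident associator. This is a routine triangle/pentagon manipulation, and is perhaps most transparently handled by drawing both sides as string diagrams and appealing to~\eqref{eq:SsissS} directly.
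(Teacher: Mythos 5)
Your proof is correct. Note that the paper itself gives no proof of this proposition --- it is recalled from \cite[Proposition 4.2]{enriquemolinerheunentull:tensortopology} --- but your argument is a valid self-contained one and follows the standard route: the \emph{if} direction is immediate, and for \emph{only if} you show that any section $g$ of $n_A=\rho\circ(A\otimes s)$ is automatically a two-sided inverse, by combining naturality of $n_X=\rho_X\circ(X\otimes s)$ in $X$ with the identification $n_{A\otimes S}=n_A\otimes\id[S]$, the latter being exactly where Lemma~\ref{lem:SsissS} (equivalently equation~\eqref{eq:SsissS}) enters. The only cosmetic slip is writing $\alpha$ where, with the paper's convention $\alpha_{A,B,C}\colon A\otimes(B\otimes C)\to(A\otimes B)\otimes C$, the rewriting of both sides of $n_{A\otimes S}=n_A\otimes\id[S]$ uses $\alpha^{-1}$; this does not affect the argument.
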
 

It follows from that, if the identity morphism on $A$ restricts to a subunit $s$, then any morphism $f \colon A \to B$ also restricts to $s$. The converse says that $f$ is totally defined in the tensor topology sense.

\begin{definition}\label{def:tensortotal}
 A morphism $f \colon A \to B$ in a monoidal category is \emph{tensor-total} when the identity morphism on $A$ tensor-restricts to a subunit $s$ as soon as $f$ tensor-restricts to $s$.
\end{definition}
 
Equivalently, $f \colon A \to B$ is tensor-total when $f$ and $A$ have the same support (for any support datum)~\cite[Section 6]{enriquemolinerheunentull:tensortopology}. Furthermore, it follows from Proposition~\ref{prop:identityrestrict} that if $f \colon A \to B$ is tensor-total and tensor-restricts to a subunit $s \colon S \rightarrowtail I$, then $A \otimes S \simeq A$. The $\S[-]$-construction has a similar property.

\begin{lemma}\label{lem:tensortotalinS}
 Let $\cat{C}$ be a firm monoidal category. If $\smash{\big[s,\;\begin{minipic}
    \node[morphism] (f) at (0,0) {$f$};
    \draw ([xshift=-1mm]f.south west) to +(0,-.125) node[right=-1mm,font=\tiny]{$A$};
    \draw ([xshift=1mm]f.south east) to +(0,-.125) node[right=-1mm,font=\tiny]{$S$};
    \draw (f.north) to +(0,.125) node[right=-1mm,font=\tiny]{$B$};
   \end{minipic}\!\big]}$ is tensor-total in $\S[\cat{C}]$, then $A \simeq A \otimes S$ in $\cat{C}$.
\end{lemma}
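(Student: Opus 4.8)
The plan is to translate tensor-restriction in $\S[\cat{C}]$ back to tensor-restriction in $\cat{C}$, apply tensor-totality of the given morphism to the subunit of $\S[\cat{C}]$ cut out by $s$, and then invoke Proposition~\ref{prop:identityrestrict} together with Proposition~\ref{prop:totalinS}. Throughout, write $\mathbf{s}$ for the subunit of $\S[\cat{C}]$ corresponding to $s\colon S\rightarrowtail I$ under Proposition~\ref{prop:monoidalinS}, and recall that the strong monoidal functor $J\colon\cat{C}\to\S[\cat{C}]$, $g\mapsto[1,g]$, sends $s$ to $\mathbf{s}$ and sends $\rho_A\circ(A\otimes s)$ to $\rho_A\circ(A\otimes\mathbf{s})$ in $\S[\cat{C}]$, up to coherence.

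First I would unwind what it means for $[s,f]\colon A\to B$ to tensor-restrict to a subunit $\mathbf{t}$ of $\S[\cat{C}]$. Spelling out the defining triangle of Definition~\ref{def:restricttosubunit} with the composition and tensor formulas of $\S[\cat{C}]$, the factorisation $\rho_B\circ(B\otimes\mathbf{t})\circ[u,k]=[s,f]$ forces $u=s$ and amounts, in $\cat{C}$, to a single equation $\rho_B\circ(B\otimes t)\circ k=f$ for some $k\colon A\otimes S\to B\otimes T$. Thus $[s,f]$ tensor-restricts to $\mathbf{t}$ in $\S[\cat{C}]$ if and only if $f\colon A\otimes S\to B$ tensor-restricts to $t$ in $\cat{C}$.

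Next I would observe that $[s,f]$ always tensor-restricts to $\mathbf{s}$ itself. Indeed $(A\otimes S)\otimes s$ is invertible, since up to an associator it is $A\otimes(S\otimes s)$ and $S\otimes s$ is invertible (as noted after the definition of a subunit). Hence $\rho\circ\bigl((A\otimes S)\otimes s\bigr)$ is an isomorphism, so by Proposition~\ref{prop:identityrestrict} the identity on $A\otimes S$ tensor-restricts to $s$ in $\cat{C}$, and therefore so does every morphism out of $A\otimes S$, in particular $f$. By the translation of the previous paragraph, $[s,f]$ tensor-restricts to $\mathbf{s}$. Since $[s,f]$ is tensor-total, Definition~\ref{def:tensortotal} then gives that $\id[A]$ tensor-restricts to $\mathbf{s}$ in $\S[\cat{C}]$, so by Proposition~\ref{prop:identityrestrict} applied inside $\S[\cat{C}]$ the morphism $\rho_A\circ(A\otimes\mathbf{s})\colon A\otimes S\to A$ is an isomorphism there. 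This morphism is $J(\rho_A\circ(A\otimes s))$, hence restriction-total; an isomorphism in a restriction category is restriction-total and so is its inverse, so both lie in $\T[\S[\cat{C}]]\cong\cat{C}$ by Proposition~\ref{prop:totalinS}. Therefore $\rho_A\circ(A\otimes s)\colon A\otimes S\to A$ is an isomorphism in $\cat{C}$, that is, $A\simeq A\otimes S$.

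I expect the only real bookkeeping to be in the first step: keeping track of the unitors and the interchange map when composing $\rho_B\circ(B\otimes\mathbf{t})$ with $[u,k]$, and checking that $\rho_A\circ(A\otimes\mathbf{s})$ is indeed $[1,\rho_A\circ(A\otimes s)]$. The conceptual point, rather than an obstacle, is the observation that \emph{every} morphism out of $A\otimes S$ tensor-restricts to $s$, which is exactly what brings $\mathbf{s}$ (and not merely some smaller subunit) into the scope of tensor-totality.
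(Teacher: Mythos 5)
Your proof is correct and follows essentially the same route as the paper's: show that $[s,f]$ tensor-restricts to the subunit of $\S[\cat{C}]$ induced by $s$ (the paper does this by drawing the explicit factorisation through $B\otimes S$ using the invertibility of $S\otimes s$, which is exactly the witness your general translation produces), apply tensor-totality and Proposition~\ref{prop:identityrestrict} to get $A\otimes S\cong A$ in $\S[\cat{C}]$, and then transfer the isomorphism to $\cat{C}$ via restriction-totality of isomorphisms and $\cat{C}\cong\T[\S[\cat{C}]]$. The extra scaffolding you add (the general equivalence between tensor-restriction in $\S[\cat{C}]$ and in $\cat{C}$) is sound but not needed for this lemma.
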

\begin{proof}
 Any morphism $\smash{\big[s,\;\begin{minipic}
    \node[morphism] (f) at (0,0) {$f$};
    \draw ([xshift=-1mm]f.south west) to +(0,-.125) node[right=-1mm,font=\tiny]{$A$};
    \draw ([xshift=1mm]f.south east) to +(0,-.125) node[right=-1mm,font=\tiny]{$S$};
    \draw (f.north) to +(0,.125) node[right=-1mm,font=\tiny]{$B$};
   \end{minipic}\!\big]}$ in $\S[\cat{C}]$ restricts to the subunit $\smash{\big[1,\,\begin{minipic}
   \draw (.3,-.2) node[dot]{} to (.3,-.4) node[right=-1mm,font=\tiny]{$S$};
  \end{minipic}\!\big]}$:
 \[\begin{pic}
  \node (l) at (0,1.5) {$A$};
  \node (r) at (6,1.5) {$B$};
  \node (b) at (3,0) {$B \otimes S$};
  \draw[->] (l) to node[above]{$\Big[s,\;\begin{minipic}[-]
    \node[morphism] (f) at (0,0) {$f$};
    \draw ([xshift=-1mm]f.south west) to +(0,-.125) node[right=-1mm,font=\tiny]{$A$};
    \draw ([xshift=1mm]f.south east) to +(0,-.125) node[right=-1mm,font=\tiny]{$S$};
    \draw (f.north) to +(0,.125) node[right=-1mm,font=\tiny]{$B$};
   \end{minipic}\!\Big]$} (r);
  \draw[->] (l) to node[yshift=-3mm,left=5mm]{$\Bigg[s,\,\begin{minipic}[-]
   \node[morphism] (f) at (0,.3) {$f$};
   \node[dot] (d) at (.1,0) {};
   \draw (f.south west) to +(0,-.3) node[right=-1mm]{$A$};
   \draw (f.south east) to[out=-90,in=180] (d);
   \draw (d) to +(0,-.15) node[right=-1mm]{$S$};
   \draw (f.north) to +(0,.2) node[right=-1mm]{$B$};
   \draw (d) to[out=0,in=-90] +(.2,.2) to +(.2,.65) node[right=-1mm]{$S$};
  \end{minipic}\Bigg]$} (b);
  \draw[->] (b) to node[right=5mm]{$\Big[\id[I],\,\begin{minipic}[-,baseline={(0,.3)}]
   \draw (0,-.2) node[right=-1mm]{$B$} to +(0,.5);
   \node[dot] (d) at (.25,.1) {};
   \draw (d) to +(0,-.3) node[right=-1mm]{$S$};
  \end{minipic}\Big]$} (r);
 \end{pic}\]
 So if $\big[s,\;\begin{minipic}
    \node[morphism] (f) at (0,0) {$f$};
    \draw ([xshift=-1mm]f.south west) to +(0,-.125) node[right=-1mm,font=\tiny]{$A$};
    \draw ([xshift=1mm]f.south east) to +(0,-.125) node[right=-1mm,font=\tiny]{$S$};
    \draw (f.north) to +(0,.125) node[right=-1mm,font=\tiny]{$B$};
   \end{minipic}\!\big]$ is tensor-total, then $A$ restricts to $\big[\id[I],\,\begin{minipic}
   \draw (.3,-.2) node[dot]{} to (.3,-.4) node[right=-1mm,font=\tiny]{$S$};
  \end{minipic}\!\big]$ too. 
  Therefore $A \otimes S \cong A$ in $\S[\cat{C}]$ by Proposition~\ref{prop:identityrestrict}. However, since isomorphisms are total, and because $\cat{C} \simeq \T[\S[\cat{C}]]$ by Proposition~\ref{prop:monoidalinS}, it follows that $A \simeq A \otimes S$ in $\cat{C}$.
\end{proof}

% \Big[s,\;\raisebox{-1.5mm}{$\begin{pic}
 %    \node[morphism] (f) at (0,0) {$f$};
  %  \draw ([xshift=-1mm]f.south west) to +(0,-.2) node[right=-1mm,font=\tiny]{$A$};
   % \draw ([xshift=1mm]f.south east) to +(0,-.2) node[right=-1mm,font=\tiny]{$S$};
    % \draw (f.north) to +(0,.15) node[right=-1mm,font=\tiny]{$B$};
   % \end{pic}$}\!\Big] is tensor-total if and only if f \colonAS->B is tensor-total in C 

%Intuitively, Lemma~\ref{lem:tensortotalinS} lets us read the totalisation axiom in $\S[\cat{C}]$ as saying that the morphisms are `bilinear', in the sense that we may move the subunit from the left in $\Big[s,\;\raisebox{-1.5mm}{$\begin{pic}
  %  \node[morphism] (f) at (0,0) {$f$};
   % \draw ([xshift=-1mm]f.south west) to +(0,-.2) node[right=-1mm,font=\tiny]{$A$};
    % \draw ([xshift=1mm]f.south east) to +(0,-.2) node[right=-1mm,font=\tiny]{$S$};
    % \draw (f.north) to +(0,.15) node[right=-1mm,font=\tiny]{$B$};
   % \end{pic}$}\!\Big] \colon A \otimes S \to B$ to the right in $[\id[I],f \otimes s] \colon A \otimes S \to B$.

The next key ingredient in characterising the $\S[-]$-construction is points (in the sense of morphisms $I \to X$) that are restriction isomorphisms whose restriction inverse is tensor-total.

\begin{definition}
 A \emph{tensor-restriction point} in a firm restriction category $\cat{X}$ is a morphism $d \colon I \to X$ with a restriction inverse $d^\circ\colon X \to I$ such that $d^\circ$ is tensor-total.
 % \[
 %  d \circ d' = \rest{d'}
 %  \qquad\qquad
 %  d' \circ d = \rest{d}
 % \]
\end{definition}

In the $\S[-]$-construction, tensor-restriction points are characterised by the subunits of the base category.

\begin{lemma}\label{lem:pointsinSC}
 Let $\cat{C}$ be a firm category. The tensor-restriction points in $\S[\cat{C}]$ are precisely the morphisms of the form $[s,\begin{minipic}
   \draw (0,0) to (0,-.4) node[right=-1mm,font=\tiny]{$S$};
  \end{minipic}\!] \colon I \to S$ for a subunit $s \colon S \rightarrowtail I$ in $\cat{C}$.
\end{lemma}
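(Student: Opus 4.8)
The plan is to prove both inclusions by combining Proposition~\ref{prop:restrictionisosinS} (which identifies restriction isomorphisms in $\S[\cat{C}]$ with certain isomorphisms of $\cat{C}$) with Lemma~\ref{lem:tensortotalinS} and Proposition~\ref{prop:identityrestrict} (which control tensor-totality via the subunit structure). Throughout, $\cat{C}$ is a firm monoidal category, and we write the point associated to a subunit $s\colon S\rightarrowtail I$ as $p=[s,\lambda_S]\colon I\to S$.

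For the inclusion ``$\supseteq$'', fix $s\colon S\rightarrowtail I$. First I would check that $p$ is a restriction isomorphism: the morphism~\eqref{eq:restrictedisomorphism} associated to $p$ is the composite $I\otimes S\xrightarrow{\lambda_S}S\to S\otimes S$ whose second leg is the inverse of $\lambda\circ(s\otimes S)=\rho\circ(S\otimes s)$ coming from the subunit structure, hence an isomorphism in $\cat{C}$. By the proof of Proposition~\ref{prop:restrictionisosinS}, the restriction inverse is $p^\circ=[s,g]$ for a morphism $g\colon S\otimes S\to I$; unwinding~\eqref{eq:restrictedinverse} and using Lemma~\ref{lem:SsissS} one computes that $g=\lambda_I\circ(s\otimes s)$, which factors as $s\circ k$ for the isomorphism $k=\lambda_S\circ(s\otimes S)\colon S\otimes S\to S$. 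It remains to see $p^\circ$ is tensor-total. Unwinding Definition~\ref{def:restricttosubunit} inside $\S[\cat{C}]$, the morphism $[s,g]\colon S\to I$ tensor-restricts to the subunit of $\S[\cat{C}]$ determined by a subunit $w\colon W\rightarrowtail I$ of $\cat{C}$ exactly when $g$ factors through $w$ in $\cat{C}$; since $g=s\circ k$ with $k$ invertible, this happens if and only if $s\leq w$. But $s\leq w$ forces $S\otimes W\cong S$ in $\cat{C}$, so $\id[S]$ tensor-restricts to the same subunit by Proposition~\ref{prop:identityrestrict} (using that an isomorphism holds in $\S[\cat{C}]$ iff it holds in $\cat{C}$, by Proposition~\ref{prop:totalinS}). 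Hence $p^\circ$ is tensor-total and $p$ is a tensor-restriction point of the required form.

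For the inclusion ``$\subseteq$'', let $[u,d]\colon I\to X$ be a tensor-restriction point, with restriction inverse $[u,d^\circ]\colon X\to I$ (restriction inverses carry the same subunit). Since $[u,d]$ is a restriction isomorphism, Proposition~\ref{prop:restrictionisosinS} makes the associated morphism $\delta\colon I\otimes U\to X\otimes U$ an isomorphism in $\cat{C}$; since $[u,d^\circ]$ is tensor-total, Lemma~\ref{lem:tensortotalinS} together with Proposition~\ref{prop:identityrestrict} gives that $\rho_X\circ(X\otimes u)\colon X\otimes U\to X$ is an isomorphism in $\cat{C}$. Composing, $\psi:=\rho_X\circ(X\otimes u)\circ\delta\circ\lambda_U^{-1}\colon U\to X$ is an isomorphism, and a short calculation using naturality of $\rho$ and the defining property of the ``comultiplication'' of $u$ (the inverse from~\eqref{eq:SsissS}) shows that $\psi=d\circ\lambda_U^{-1}$, so $d=\psi\circ\lambda_U$. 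Now put $s:=u\circ\psi^{-1}\colon X\to I$: it is monic, and $s\otimes X$ is invertible because it is obtained from the invertible $u\otimes U$ by conjugation with the isomorphisms $I\otimes\psi$, $U\otimes\psi$ and composition with $\psi^{-1}\otimes X$, so $s\colon X\rightarrowtail I$ is a subunit. Finally $\psi^{-1}\colon X\to U$ witnesses the identification $(u,d)\sim(s,\lambda_X)$ defining morphisms of $\S[\cat{C}]$, since $u\circ\psi^{-1}=s$ and $d\circ(I\otimes\psi^{-1})=\psi\circ\lambda_U\circ(I\otimes\psi^{-1})=\lambda_X$ by naturality of $\lambda$. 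Therefore $[u,d]=[s,\lambda_X]$ has the claimed form.

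The main obstacle is the bookkeeping inside $\cat{C}$: pinning down the restriction inverse $p^\circ$ explicitly enough to recognise tensor-totality, and dually verifying that the isomorphism $\psi$ extracted in the converse direction really is $d$ up to the left unitor. Both reduce to the single identity $\lambda\circ(s\otimes S)=\rho\circ(S\otimes s)$ for a subunit and its inverse, that is, to the graphical equalities displayed after Lemma~\ref{lem:SsissS}; once these are in hand the rest is routine.
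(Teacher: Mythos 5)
Your proof is correct and follows essentially the same route as the paper's: both directions rest on Proposition~\ref{prop:restrictionisosinS}, Lemma~\ref{lem:tensortotalinS} together with Proposition~\ref{prop:identityrestrict}, and the identity of Lemma~\ref{lem:SsissS}. The only difference is presentational: the paper characterises the tensor-total restriction isomorphisms $X \to I$ (showing they are exactly the $[s, s\wedge s]$) and then appeals to uniqueness of restriction inverses, whereas you construct the isomorphism $\psi \colon U \to X$ and the identification of pairs directly on the point $I \to X$.
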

\begin{proof}
 We will prove that tensor-total restriction isomorphisms into $I$ are precisely the morphisms in $\S[\cat{C}]$ of the form $\big[s,\,\begin{minipic}
   \draw (0,-.1) node[dot]{} to (0,-.4) node[right=-1mm,font=\tiny]{$S$};
     \draw (.3,-.1) node[dot]{} to (.3,-.4) node[right=-1mm,font=\tiny]{$S$};
  \end{minipic}\!\big]$. Because this has $[s, \,\begin{minipic} \draw (0,-.2) node[right=-1mm]{$S$} to (0,.1); \end{minipic}\!] \colon I \to S$ as restriction inverse, and restriction inverses are unique, the claim then follows. 

  It is clear that $\big[s,\,\begin{minipic}
   \draw (0,-.1) node[dot]{} to (0,-.4) node[right=-1mm,font=\tiny]{$S$};
     \draw (.3,-.1) node[dot]{} to (.3,-.4) node[right=-1mm,font=\tiny]{$S$};
  \end{minipic}\!\big] \colon S \to I$ is a restriction isomorphism from Proposition~\ref{prop:restrictionisosinS}. To see that it is tensor-total: if it restricts to $\big[\id[I],\,\begin{minipic}
   \draw (.3,-.1) node[dot]{} to (.3,-.4) node[right=-1mm,font=\tiny]{$T$};
  \end{minipic}\!\big]$, then $s \wedge s = t \circ m$ for some $m \colon S \otimes S \to T$, whence $s = t \circ m \circ (S \otimes s)^{-1}$, and so $s \leq t$.

 For the converse, suppose that $\big[s,\;\begin{minipic}
    \node[morphism] (f) at (0,0) {$f$};
    \draw ([xshift=-1mm]f.south west) to +(0,-.125) node[right=-1mm,font=\tiny]{$A$};
    \draw ([xshift=1mm]f.south east) to +(0,-.125) node[right=-1mm,font=\tiny]{$S$};
    \draw (f.north) to +(0,.125) node[right=-1mm,font=\tiny]{$B$};
   \end{minipic}\!\big] \colon A \to I$ is a tensor-total restriction isomorphism. Then $A \simeq A \otimes S$ by Lemma~\ref{lem:tensortotalinS}, and $A \otimes S \simeq S$ by Proposition~\ref{prop:restrictionisosinS}, giving $A \simeq S$ by the following isomorphism $m \colon A \to S$:
 \[\begin{pic}
  \node[morphism,width=6mm] (f) at (0,1) {$f$};
  \node[dot] (d) at (.5,.5) {};
  \node[morphism] (i) at (.25,0) {$(A \otimes s)^{-1}$};
  \draw (f.south east) to[out=-90,in=180] (d);
  \draw (d) to +(0,-.3);
  \draw (f.south west) to node[left]{$A$} +(0,-.6);
  \draw (d) to[out=0,in=-90] +(.3,.3) to +(.3,1) node[right]{$S$};
  \draw (i.south) to +(0,-.4) node[left]{$A$};
 \end{pic}\]
 Define $a = s \circ m \colon A \rightarrowtail I$.
 Lemma~\ref{lem:SsissS} now gives:
 \[f \otimes f = \begin{pic}
  \node[morphism,width=14mm] (t) at (0,.6) {$f$};
  \node[morphism,width=3mm] (b) at (.1,0) {$f$};
  \node[dot] (d) at (.45,-.5) {};
  \draw (d) to[out=180,in=-90,looseness=.9] (b.south east);
  \draw (d) to[out=0,in=-90,looseness=.5] ([xshift=2mm]t.south east);
  \draw (d) to +(0,-.4) node[right=-1mm]{$S$};
  \draw (b.south west) to +(0,-.7) node[right=-1mm]{$A$};
  \draw ([xshift=-2mm]t.south west) to +(0,-1.3) node[right=-1mm]{$A$};
  \draw (-.4,-.9) node[right=-1mm]{$S$} to +(0,.4) node[dot]{};
 \end{pic}\]
 It follows that $\big[s,\;\begin{minipic}
    \node[morphism] (f) at (0,0) {$f$};
    \draw ([xshift=-1mm]f.south west) to +(0,-.125) node[right=-1mm,font=\tiny]{$A$};
    \draw ([xshift=1mm]f.south east) to +(0,-.125) node[right=-1mm,font=\tiny]{$S$};
    \draw (f.north) to +(0,.125) node[right=-1mm,font=\tiny]{$B$};
   \end{minipic}\!\big] = [a, f \circ (A \otimes m)] = [a, a \wedge a]$.
\end{proof}

The last preparation we need before we can define a tensor-restriction category is to recall the notion of scalar multiplication in a monoidal category \cite[Section 2.1.3]{heunenvicary:cqm}. In a monoidal category $\cat{C}$, given a scalar $s \colon I \to I$ and a morphism $f \colon A \to B$, define the (left) scalar multiplication $s \bullet f \colon A \to B$ as the composite $s \bullet f = \lambda_B \circ (s \otimes f) \circ \lambda^{-1}_B$. 

Now we can axiomatise the types of restriction categories we are interested in, and show that they characterise the $\S[-]$-construction.

\begin{definition}\label{def:tensorrestrictioncategory}
  A \emph{tensor-restriction category} is a firm restriction category where: 
  \begin{enumerate}
    \item[(TR1)] 
    any restriction idempotent scalar $e=\rest{e} \colon I \to I$ factors through a subunit $s \colon S \rightarrowtail I$ via a tensor-restriction point $d \colon I \to S$:
      \[\begin{pic}[xscale=2]
        \node (tl) at (0,2) {$I$};
        \node (tr) at (2,2) {$I$};
        \node (t) at (1,1.2) {$S$};
        % \node (b) at (1,0) {$S'$};
        \draw[->] (tl) to node[above]{$e=\rest{e}$} (tr);
        \draw[->,dashed] (tl) to node[below]{$d$} (t);
        \draw[>->,dashed] (t) to node[below]{$s$} (tr);
        % \draw[->,dashed] (t) to (b);
        % \draw[->] (tl) to[out=-90,in=180,looseness=.8] node[below=1mm]{$d'$} (b);
        % \draw[>->] (b) to[out=0,in=-90,looseness=.8] node[below=1mm]{$s'$} (tr);
      \end{pic}\]

  \item[(TR2)] 
  any subunit $s \colon S \rightarrowtail I$ has a tensor-restriction point $d \colon I \to S$ as restriction section in the sense that the following diagram commutes: 
      % any subunit $s$ has a largest restricted section $d$, or more precisely:
      % $s \circ d = \rest{d}$, and if $s \circ d' = \rest{d'}$, then $d\geq d'$;
      \[\begin{pic}[xscale=2]
        \node (l) at (0,0) {$I$};
        \node (r) at (2,0) {$I$};
        \node (t) at (1,1) {$S$};
        % \node (b) at (1,-1) {$I$};
        % \draw[->] (b) to node[right=2mm]{$\rest{d'}$} (r);
        % \draw[->] (b) to node[left=2mm]{$\rest{d'}$} (l);
        \draw[>->] (t) to node[above]{$s$} (r);
        % \draw[->] (b) to node[right=-.5mm,yshift=-7mm]{$d'$} (t);
        \draw[->,halo] (l) to node[below]{$\rest{d}$} (r);
        \draw[->,dashed] (l) to node[above]{$d$} (t);
      \end{pic}\]
  \item[(TR3)] 
  any restriction idempotent $f=\rest{f} \colon X \to X$ equals $f=e \bullet X$ for a unique restriction idempotent scalar $e = \rest{e} \colon I \to I$; 
  
  \item[(TR4)]
  any tensor-total morphism $f \colon X \to Y$ equals $f=\total{f} \circ \rest{f}$ for a unique restriction-total morphism $\total{f} \colon X \to Y$; 

  \item[(TR5)]
  tensor-restriction points $d \colon I \to X$ have the left-lifting property against subunits $s \colon S \rightarrowtail I$: if $s \circ f = g \circ d$ then $f = m \circ d$ and $g = s \circ m$;% for some $m$;
      \begin{equation}\label{eq:diagonalfillin}
       \begin{pic}[xscale=2,yscale=1.5]
        \node (tl) at (0,1) {$I$};
        \node (tr) at (1,1) {$X$};
        \node (bl) at (0,0) {$S$};
        \node (br) at (1,0) {$I$};
        \draw[->] (tl) to node[above]{$d$} (tr);
        \draw[->] (tr) to node[right]{$g$} (br);
        \draw[->] (tl) to node[left]{$f$} (bl);
        \draw[>->] (bl) to node[below]{$s$} (br);
        \draw[->,dashed] (tr) to node[right=2mm]{$m$} (bl);
       \end{pic}
      \end{equation}   

  \item[(TR6)]
  if $d \colon I \to X$ and $d' \colon I \to X'$ are tensor-restriction points, then so is their tensor product $(d \otimes d') \circ \lambda^{-1} \colon I \to X \otimes X'$;  
 
  \item[(TR7)]
  tensor-restriction points are determined by their codomain: if $d,d' \colon I \to X$ are tensor-restriction points, then $d'=d \circ m$ for a unique scalar $m \colon I \to I$.
 \end{enumerate}
\end{definition}

The seven axioms (TR1)--(TR7) essentially demand that a notion from restriction category theory agrees with the corresponding notion from tensor topology.

\begin{proposition}
  If $\cat{C}$ is a firm monoidal category, $\S[\cat{C}]$ is a tensor-restriction category.
\end{proposition}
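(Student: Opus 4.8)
The plan is to check the seven axioms (TR1)--(TR7) one at a time. Since $\S[\cat{C}]$ is already known to be a firm restriction category, the work is to match each restriction-theoretic notion in the axioms with its description in terms of $\cat{C}$, using the dictionary assembled earlier in the paper: restriction idempotents on any object $A$ are the maps $\big[s,\,\ldots\big]$ for a subunit $s$, with $\cO(A)\cong\ISub(\cat{C})$ (Proposition~\ref{prop:restrictionidempotentsinS}, explicit restriction formula from Proposition~\ref{prop:SCrestriction}); restriction-total maps are the $[1,f]$ (Proposition~\ref{prop:totalinS}); restriction isomorphisms are governed by Proposition~\ref{prop:restrictionisosinS}; tensor-restriction points are exactly the maps $[s,1_S]\colon I\to S$ (Lemma~\ref{lem:pointsinSC}); tensor-total maps $[s,f]$ have $A\simeq A\otimes S$ (Lemma~\ref{lem:tensortotalinS}); and the monoidal structure is that of Proposition~\ref{prop:monoidalinS}. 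Under this dictionary each axiom becomes a short diagram chase in $\cat{C}$, almost always closed off by Lemma~\ref{lem:SsissS} and coherence.

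For (TR1) and (TR2): a restriction idempotent scalar $e=\rest{e}\colon I\to I$ corresponds under $\cO(I)\cong\ISub(\cat{C})$ to a subunit $s\colon S\rightarrowtail I$; one checks that the tensor-restriction point $d=[s,\lambda_S]\colon I\to S$ and the subunit $[1,s\circ\rho_S]\colon S\rightarrowtail I$ of $\S[\cat{C}]$ compose to $e$ (naturality of $\lambda$), giving (TR1), while $[1,s\circ\rho_S]\circ d=\rest{d}=e$ gives (TR2). For (TR3): a restriction idempotent $f=\rest{f}\colon X\to X$ is $[s,\rho_X\circ(X\otimes s)]$ for a subunit $s$; taking $e$ the restriction idempotent scalar corresponding to the same $s$, the scalar multiple $e\bullet 1_X$ computes to $f$ via Proposition~\ref{prop:monoidalinS} and coherence, and uniqueness is immediate since $e\mapsto e\bullet 1_X$ is the composite isomorphism $\cO(I)\cong\ISub(\cat{C})\cong\cO(X)$. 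For (TR6): if $d=[p,\lambda_P]$ and $d'=[p',\lambda_{P'}]$ then $(d\otimes d')\circ\lambda^{-1}=[p\wedge p',\lambda_{P\otimes P'}]$, again a tensor-restriction point by Lemma~\ref{lem:pointsinSC} (and $p\wedge p'$ is a subunit since $\S[\cat{C}]$ is firm). For (TR7): two tensor-restriction points $d,d'\colon I\to X$ have, by Lemma~\ref{lem:pointsinSC}, the form $[p_1,\lambda_X]$ and $[p_2,\lambda_X]$ for monomorphisms $p_1,p_2\colon X\rightarrowtail I$ representing the same subobject; the comparison scalar is $d^{\circ}\circ d'$, with uniqueness from $d$ being a restriction isomorphism and $\rest{d}=\rest{d'}$.

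The two substantive cases are (TR4) and (TR5). For (TR4): a tensor-total $[s,f]\colon A\to B$ gives, by Lemma~\ref{lem:tensortotalinS} (via Proposition~\ref{prop:identityrestrict}), that $\iota:=\rho_A\circ(A\otimes s)\colon A\otimes S\to A$ is invertible in $\cat{C}$; one sets $\total{[s,f]}:=[1,f\circ\iota^{-1}\circ\rho_A]$, checks $\total{[s,f]}\circ\rest{[s,f]}=[s,f]$ by unwinding the composition formula, and reads off uniqueness from the fact that $\iota$ (with the relevant coherence isomorphisms) is invertible, so the equation determining the morphism part of the total factor has a unique solution. For (TR5): one unwinds the square $s\circ f=g\circ d$ with $d=[p,\lambda_P]\colon I\to P$ a tensor-restriction point and $s\colon S\rightarrowtail I$ a subunit; writing $f=[u,f']$ and $g=[v,g']$, the equality forces $u=p\wedge v$ and shows that the morphism part $g'$ factors through $s$ in $\cat{C}$, namely through $f'$; the diagonal $m\colon P\to S$ is built from this factorisation, and it is unique because the subunit $[1,s\circ\rho_S]$ is a monomorphism in $\S[\cat{C}]$.

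I expect the main obstacle to be (TR5): unlike the other axioms it is not a mere unfolding of definitions but a genuine orthogonality argument, requiring one to read the composition of $\S[\cat{C}]$ (and its coherence isomorphisms) carefully enough to extract the factorisation of $g$ through $s$. A secondary source of friction is the coherence bookkeeping throughout (TR1), (TR3), (TR4), and making sure the comparison morphisms in the uniqueness clauses of (TR3), (TR4), (TR7) are forced rather than merely one admissible choice.
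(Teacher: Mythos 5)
Your proposal is correct and follows essentially the same route as the paper: verify (TR1)--(TR7) one by one by translating each restriction-theoretic notion through the dictionary of Section~3 (Propositions~\ref{prop:restrictionidempotentsinS}, \ref{prop:totalinS}, \ref{prop:restrictionisosinS}, Lemmas~\ref{lem:tensortotalinS} and~\ref{lem:pointsinSC}), with the same witnesses in each case --- the point $[s,\lambda_S]$ and subunit $[\id[I],s]$ for (TR1)/(TR2), scalar multiplication for (TR3), the inverse of $\rho\circ(A\otimes s)$ for (TR4), the morphism part of the left leg as the fill-in for (TR5), and Lemma~\ref{lem:pointsinSC} for (TR6)/(TR7). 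The paper's own proof is no more detailed than yours on the points you flag as delicate.
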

\begin{proof} 
  The previous section already showed that $\S[\cat{C}]$ is a firm restriction category. It remains to verify (TR1)--(TR7). 

  For (TR1), recall that by Proposition \ref{prop:restrictionidempotentsinS}, restriction idempotent scalars in $\S[\cat{C}]$ are of the form $[s,\,\begin{minipic} \draw (0,-.2) to (0,0) node[dot]{}; \end{minipic}\,]$ for a subunit $s \colon S \rightarrowtail I$ in $\cat{C}$. 
  But this morphism factors through the subunit $[\id[I],s]$ as follows:
  \[\begin{pic}[xscale=2]
    \node (l) at (0,1) {$I$};
    \node (r) at (2,1) {$I$};
    \node (t) at (1,0) {$S$};
    % \node (b) at (1,-1) {$T$};
    \draw[->] (l) to node[above]{$[s,\,\begin{minipic}[-]
      \draw (0,-.4) node[right=-1mm]{$S$} to (0,-.2) node[dot]{};
    \end{minipic}\!]$} (r);
    \draw[->] (l) to node[below left]{$[s,\,\begin{minipic}[-]
      \draw (0,-.4) node[right=-1mm]{$S$} to (0,-.1);
    \end{minipic}\!]$} (t);
    \draw[->] (t) to node[below right]{$[\id[I],\,\begin{minipic}[-]
      \node[dot] (d) at (0,-.2) {};
      \draw (d) to +(0,-.2) node[right=-1mm]{$S$};
    \end{minipic}\!]$} (r);
  \end{pic}\]
  % If it also factors through a subunit $[\id[I],t]$, then $s \leq t$ and hence $[\id[I],s] \leq [\id[I],t]$.
  Axiom (TR2) holds similarly. 
  
  For (TR3), recall by Proposition \ref{prop:restrictionidempotentsinS} that restriction idempotents on $A$ in $\S[\cat{C}]$ are of the form $\smash{\big[s,\,\begin{minipic} \draw (-.3,0) node[right=-1mm,font=\tiny]{$A$} to (-.3,.4); \draw (0,0) node[right=-1mm,font=\tiny]{$S$} to (0,.2) node[dot,scale=.8]{}; \end{minipic}\!\big]}$ for a subunit $s$ in $\cat{C}$. But this is precisely the scalar multiplication of the identity on $A$ with the restriction idempotent scalar $[s,\,\begin{minipic} \draw (0,-.2) to (0,0) node[dot,scale=.8]{}; \end{minipic}\,]$.

  We turn to (TR4). If $\big[s,\;\begin{minipic}
        \node[morphism] (f) at (0,0) {$f$};
        \draw ([xshift=-1mm]f.south west) to +(0,-.125) node[right=-1mm,font=\tiny]{$A$};
        \draw ([xshift=1mm]f.south east) to +(0,-.125) node[right=-1mm,font=\tiny]{$S$};
        \draw (f.north) to +(0,.125) node[right=-1mm,font=\tiny]{$B$};
      \end{minipic}\!\big]$ is tensor-total in $\S[\cat{C}]$, then  
   $[\id[I],\smash{\begin{minipic}[baseline={(0,.1)}] \draw (0,-.2) node[right=-1mm,font=\tiny]{$A$} to (0,.2); \draw (.3,-.2) node[right=-1mm,font=\tiny]{$S$} to (.3,0) node[minidot]{};\end{minipic}}\!]$ has an inverse $\big[\id[I],\;\begin{minipic}
        \node[morphism] (f) at (0,0) {$g$};
        \draw ([xshift=-1mm]f.north west) to +(0,.125) node[right=-1mm,font=\tiny]{$A$};
        \draw ([xshift=1mm]f.north east) to +(0,.125) node[right=-1mm,font=\tiny]{$S$};
        \draw (f.south) to +(0,-.125) node[right=-1mm,font=\tiny]{$A$};
      \end{minipic}\!\big]$ by Lemma~\ref{lem:tensortotalinS} and~\cite[Proposition 4.2]{enriquemolinerheunentull:tensortopology}. Define:
       \[\Bigl\lceil\big[s,\;\begin{minipic}
        \node[morphism] (f) at (0,0) {$f$};
        \draw ([xshift=-1mm]f.south west) to +(0,-.125) node[right=-1mm,font=\tiny]{$A$};
        \draw ([xshift=1mm]f.south east) to +(0,-.125) node[right=-1mm,font=\tiny]{$S$};
        \draw (f.north) to +(0,.125) node[right=-1mm,font=\tiny]{$B$};
      \end{minipic}\!\big]\Bigr\rceil = [\id[I],f \circ g] \colon A \to B\]
      Then $\big[s,\;\begin{minipic}
        \node[morphism] (f) at (0,0) {$f$};
        \draw ([xshift=-1mm]f.south west) to +(0,-.125) node[right=-1mm,font=\tiny]{$A$};
        \draw ([xshift=1mm]f.south east) to +(0,-.125) node[right=-1mm,font=\tiny]{$S$};
        \draw (f.north) to +(0,.125) node[right=-1mm,font=\tiny]{$B$};
      \end{minipic}\!\big] = \Bigl\lceil\big[s,\;\begin{minipic}
        \node[morphism] (f) at (0,0) {$f$};
        \draw ([xshift=-1mm]f.south west) to +(0,-.125) node[right=-1mm,font=\tiny]{$A$};
        \draw ([xshift=1mm]f.south east) to +(0,-.125) node[right=-1mm,font=\tiny]{$S$};
        \draw (f.north) to +(0,.125) node[right=-1mm,font=\tiny]{$B$};
      \end{minipic}\!\big]\Bigr\rceil \circ \rest{\big[s,\;\begin{minipic}
        \node[morphism] (f) at (0,0) {$f$};
        \draw ([xshift=-1mm]f.south west) to +(0,-.125) node[right=-1mm,font=\tiny]{$A$};
        \draw ([xshift=1mm]f.south east) to +(0,-.125) node[right=-1mm,font=\tiny]{$S$};
        \draw (f.north) to +(0,.125) node[right=-1mm,font=\tiny]{$B$};
      \end{minipic}\!\big]}$, and $\Bigl\lceil\big[s,\;\begin{minipic}
        \node[morphism] (f) at (0,0) {$f$};
        \draw ([xshift=-1mm]f.south west) to +(0,-.125) node[right=-1mm,font=\tiny]{$A$};
        \draw ([xshift=1mm]f.south east) to +(0,-.125) node[right=-1mm,font=\tiny]{$S$};
        \draw (f.north) to +(0,.125) node[right=-1mm,font=\tiny]{$B$};
      \end{minipic}\!\big]\Bigr\rceil$ is the unique such map: if $\big[s,\;\begin{minipic}
        \node[morphism] (f) at (0,0) {$f$};
        \draw ([xshift=-1mm]f.south west) to +(0,-.125) node[right=-1mm,font=\tiny]{$A$};
        \draw ([xshift=1mm]f.south east) to +(0,-.125) node[right=-1mm,font=\tiny]{$S$};
        \draw (f.north) to +(0,.125) node[right=-1mm,font=\tiny]{$B$};
      \end{minipic}\!\big] = [\id[I],h] \circ \rest{\big[s,\;\begin{minipic}
        \node[morphism] (f) at (0,0) {$f$};
        \draw ([xshift=-1mm]f.south west) to +(0,-.125) node[right=-1mm,font=\tiny]{$A$};
        \draw ([xshift=1mm]f.south east) to +(0,-.125) node[right=-1mm,font=\tiny]{$S$};
        \draw (f.north) to +(0,.125) node[right=-1mm,font=\tiny]{$B$};
      \end{minipic}\!\big]}$, then $f = h \otimes s$, and so $h=f \circ g$.

  To see (TR5), notice that~\eqref{eq:diagonalfillin} in $\S[\cat{C}]$ becomes:
  \[\begin{pic}[xscale=3,yscale=1.5]
    \node (tl) at (0,1) {$I$};
    \node (tr) at (1,1) {$R$};
    \node (bl) at (0,0) {$S$};
    \node (br) at (1,0) {$I$};
    \draw[->] (tl) to node[above]{$[r,\id[R]]$} (tr);
    \draw[->] (tr) to node[right]{$ \big[t,\;\begin{minipic}[-]
        \node[morphism] (f) at (0,0) {$g$};
        \draw ([xshift=-1mm]f.south west) to +(0,-.125) node[right=-1mm,font=\tiny]{$B$};
        \draw ([xshift=1mm]f.south east) to +(0,-.125) node[right=-1mm,font=\tiny]{$T$};
        \draw (f.north) to +(0,.125) node[right=-1mm,font=\tiny]{$C$};
      \end{minipic}\!\big]$} (br);
    \draw[->] (tl) to node[left]{$[r \wedge t,f]$} (bl);
    \draw[>->] (bl) to node[below]{$[\id[I],s]$} (br);
    \draw[->,dashed] (tr) to node[right=2mm]{$[t,m]$} (bl);
  \end{pic}\]
  There is indeed a unique diagonal fill-in, and it is $m=f$.

  Finally, (TR6) and (TR7) follow from Corollary~\ref{lem:pointsinSC}.
\end{proof}

We will now show that every tensor-restriction category $\cat{X}$ is in fact of the form $\S[\cat{C}]$ for some firm monoidal category $\cat{C}$. To explain this, let us investigate some basic properties of tensor-restriction categories, specifically regarding its restriction idempotents. Since $\ISub(\cat{C}) \cong \ISub(\S[\cat{C}])$ in $\S[\cat{C}]$ and $\cO(A) \cong \ISub(\cat{C})$ for every object $A$, we have $\cO(A) \cong \cO(I) \cong \ISub(\S[\cat{C}])$. This holds generally: if $\cat{X}$ is a tensor restriction category then $\cO(X) \cong \cO(I) \cong \ISub(\cat{X})$. To set the scene, we first consider scalars.

\begin{lemma}\label{lemma:scalarrestriction1}
  If $\cat{X}$ is a monoidal restriction category,
  then the semilattice $\cO(I)$ is a retract of the scalar monoid $\cat{X}(I,I)$.      
  \[\begin{tikzpicture}
    \node (l) at (0,0) {$\cO(I)$};
    \node (r) at (3,0) {$\cat{X}(I,I)$};
    \draw[>->] ([yshift=1mm]l.east) to ([yshift=1mm]r.west);
    \draw[->>] ([yshift=-1mm]r.west) to node[below]{$\rest{(-)}$} ([yshift=-1mm]l.east);
    \draw[->] (l) to[out=160,in=-160,looseness=3] node[left]{$1$} (l);
  \end{tikzpicture}\]
\end{lemma}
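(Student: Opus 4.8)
The plan is to exhibit the set-theoretic inclusion $i\colon \cO(I) \hookrightarrow \cat{X}(I,I)$ and the restriction operator $\rest{(-)}\colon \cat{X}(I,I) \to \cO(I)$ as monoid homomorphisms with $\rest{(-)} \circ i = 1_{\cO(I)}$; since $\cO(I)$ is a semilattice (a commutative idempotent monoid under composition), this displays it as a retract of the scalar monoid in the category of monoids. Three of the four ingredients are essentially formal. First, as already noted in Section~\ref{sec:Sconstruction}, $\cO(I)$ is closed under composition with $e \wedge e' = e \circ e'$ and has unit $1_I$, so $i$ is a well-defined injective monoid homomorphism. Second, for any scalar $s \colon I \to I$ the endomorphism $\rest{s}$ is again a restriction idempotent~\cite[Lemma~2.1]{cockettlack:restrictioncategories} of type $I \to I$, so $\rest{(-)}$ really lands in $\cO(I)$. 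Third, $\rest{(-)} \circ i = 1_{\cO(I)}$ holds because $\rest{e} = e$ for every $e \in \cO(I)$ by definition; in particular $\rest{(-)}$ is a split epimorphism.

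The remaining point — and the only one that uses the monoidal structure — is that $\rest{(-)}$ preserves the monoid operation on scalars. Preservation of the unit is immediate, as identities are restriction-total. For a product of two scalars $s,t \colon I \to I$, I would invoke the standard fact that in any monoidal category composition of scalars agrees with their tensor product up to coherence, $s \circ t = \lambda_I \circ (s \otimes t) \circ \lambda_I^{-1}$. Since $\lambda_I$ is an isomorphism (hence restriction-total) and restriction commutes with conjugation by isomorphisms — a consequence of the standard identities $\rest{g \circ f} = \rest{f}$ for $g$ total and $\rest{f \circ h} = h^{-1} \circ \rest{f} \circ h$ for $h$ an isomorphism — one computes
\[
 \rest{s \circ t} \;=\; \lambda_I \circ \rest{s \otimes t} \circ \lambda_I^{-1} \;=\; \lambda_I \circ (\rest{s} \otimes \rest{t}) \circ \lambda_I^{-1} \;=\; \rest{s} \circ \rest{t},
\]
where the middle equality is exactly the monoidal restriction axiom $\rest{f \otimes g} = \rest{f} \otimes \rest{g}$ and the outer ones are the coherence identification applied to $s\otimes t$ and to $\rest{s}\otimes\rest{t}$. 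Hence $\rest{(-)}$ is a monoid homomorphism, completing the retraction.

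I expect the main obstacle to be precisely this last computation, or rather the care it requires: one must pin down which elementary restriction-category identities are being used, and which coherence identifications for scalars (notably $\lambda_I = \rho_I$ on $I \otimes I$ and $s \circ t = \lambda_I \circ (s \otimes t) \circ \lambda_I^{-1}$). None is deep, but this step is genuinely the crux, since the analogous equality $\rest{s \circ t} = \rest{s} \circ \rest{t}$ already fails for endomorphisms of a non-unit object in an arbitrary restriction category; it is the monoidal unit — together with $\rest{f \otimes g} = \rest{f} \otimes \rest{g}$ — that makes $\rest{(-)}$ behave multiplicatively on scalars. Everything else is routine bookkeeping.
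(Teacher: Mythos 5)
Your proposal is correct and follows essentially the same route as the paper's proof: both reduce $\rest{s\circ t}=\rest{s}\circ\rest{t}$ to the identification $s\circ t=\lambda_I\circ(s\otimes t)\circ\lambda_I^{-1}$, the monoidal restriction axiom $\rest{f\otimes g}=\rest{f}\otimes\rest{g}$, totality of $\lambda$, and the (R4)-type conjugation identity, with the retraction then following from $\rest{e}=e$ for $e\in\cO(I)$. The only difference is cosmetic: the paper runs the chain of equalities from $\rest{s}\circ\rest{t}$ to $\rest{s\circ t}$ rather than the reverse.
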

\begin{proof}
  Recall that the scalars $\cat{X}(I,I)$ of a monoidal category are always a commutative monoid under composition $s \circ t$, which equals scalar multiplication $s \bullet t = \lambda \circ (s \otimes t) \circ \lambda^{-1}$; see~\cite[Section 2.1]{heunenvicary:cqm}. Now:
  \begin{align*}
    \rest{s} \circ \rest{t}
    & = \rest{s} \bullet \rest{t} && \text{(by \cite[Lemma 2.6(b)]{heunenvicary:cqm})} \\
    & = \lambda \circ \rest{(s \otimes t)} \circ \lambda^{-1} && \text{(because $\rest{f \otimes g}=\rest{f} \otimes \rest{g}$)} \\
    & = \lambda \circ \rest{\rest{\lambda} \circ (s \otimes t)} \circ \lambda^{-1} && \text{($\lambda$ is total)}\\
    & = \lambda \circ \rest{\lambda \circ (s \otimes t)} \circ \lambda^{-1} && \text{(by \cite[Lemma 2.1(iii)]{cockettlack:restrictioncategories})} \\
    & = \lambda \circ \lambda^{-1} \circ \rest{\lambda \circ (s \otimes t)\circ \lambda^{-1}} && \text{(R4)} \\
    & = \rest{\lambda \circ (s \otimes t) \circ \lambda^{-1}} && \text{($\lambda$ is iso)} \\
    & = \rest{s \circ t}  && \text{(by \cite[Lemma 2.6(b)]{heunenvicary:cqm})}
  \end{align*}
  It follows that restriction is a monoid homomorphism $\cat{X}(I,I) \to \cO(I)$. 
  Finally, if $e \in \cO(I)$, then by definition $\rest{e}=e$, making $\cO(I)$ a retract of $\cat{X}(I,I)$, where we regard a semilattice as a commutative idempotent monoid.
\end{proof}

\begin{lemma} \label{lemma:scalarrestriction2}
  If $\cat{X}$ is a monoidal restriction category, 
  then there is a semilattice morphism $(-) \bullet X \colon \cO(I) \to \cO(X)$ for any object $X$.
\end{lemma}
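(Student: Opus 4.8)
The plan is to show that the assignment $e \mapsto e\bullet X$ lands in $\cO(X)$ and is a semilattice morphism, where $\cO(I)$ and $\cO(X)$ carry their standard semilattice structure (meet $=$ composition, top $=$ identity). First I would isolate the purely monoidal fact that $(-)\bullet X\colon \cat{X}(I,I)\to\cat{X}(X,X)$, sending a scalar $s$ to $\lambda_X\circ(s\otimes 1_X)\circ\lambda_X^{-1}$, is a homomorphism of monoids under composition sending $1_I$ to $1_X$; this is immediate from bifunctoriality of $\otimes$ and from $\lambda_X$ being an isomorphism. Since the inclusions $\cO(I)\hookrightarrow\cat{X}(I,I)$ and $\cO(X)\hookrightarrow\cat{X}(X,X)$ are themselves monoid homomorphisms (restriction idempotents are closed under composition, which is commutative among them by (R2), with the identity as unit~\cite[Section 2.1]{cockett:range}), it then suffices to check that $(-)\bullet X$ corestricts to a map $\cO(I)\to\cO(X)$: a monoid homomorphism between commutative idempotent monoids is precisely a semilattice morphism.

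Thus the only real content is the claim that $e\bullet X$ is a restriction idempotent whenever $e=\rest e\colon I\to I$ is. Here I would use the monoidal restriction axiom $\rest{f\otimes g}=\rest f\otimes\rest g$ together with $\rest{1_X}=1_X$ to deduce that $k := e\otimes 1_X$ satisfies $\rest k = k$, i.e. $k$ is a restriction idempotent on $I\otimes X$. It then remains to observe that conjugating a restriction idempotent by a total isomorphism yields a restriction idempotent: since $\lambda_X$ is total, $\rest{\lambda_X\circ k\circ\lambda_X^{-1}}=\rest{k\circ\lambda_X^{-1}}$ by \cite[Lemma 2.1(iii)]{cockettlack:restrictioncategories}, while (R4) applied to $k=\rest k$ and $\lambda_X^{-1}$ gives $k\circ\lambda_X^{-1}=\lambda_X^{-1}\circ\rest{k\circ\lambda_X^{-1}}$, so that $e\bullet X=\lambda_X\circ k\circ\lambda_X^{-1}=\rest{k\circ\lambda_X^{-1}}=\rest{e\bullet X}$. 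This is essentially the same manipulation already carried out in the proof of Lemma~\ref{lemma:scalarrestriction1}. Preservation of the semilattice structure on the nose then drops out of the first paragraph: $(e\bullet X)\circ(e'\bullet X)=(e\circ e')\bullet X=(e\wedge e')\bullet X$ and $1_I\bullet X=1_X$.

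The main obstacle is this middle step — verifying that $e\bullet X$ is a restriction idempotent — since it is the one place where the interaction between the tensor product and the restriction structure is genuinely used; everything else is formal bookkeeping. I do not anticipate real difficulty there either, only the need to be attentive to the direction of the (R4) identity and to the fact that precomposing with a total map leaves the restriction unchanged.
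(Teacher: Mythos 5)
Your proof is correct and follows essentially the same route as the paper, which simply cites \cite[Lemma~2.6]{heunenvicary:cqm} for the monoid-homomorphism property of $(-)\bullet X$ and leaves the rest implicit. Your explicit verification that $e\bullet X$ is again a restriction idempotent (via $\rest{f\otimes g}=\rest f\otimes\rest g$ and conjugation by the total isomorphism $\lambda_X$) is exactly the manipulation already used in the proof of Lemma~\ref{lemma:scalarrestriction1}, so you are filling in a detail the paper omits rather than taking a different approach.
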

\begin{proof}
  It is a monoid homomorphism by~\cite[Lemma~2.6]{heunenvicary:cqm}.
\end{proof}

Next we use (TR1) to construct a semilattice morphism $\cO(I) \to \ISub(\cat{X})$. 

\begin{lemma}\label{lem:OtoISub}
  If $\cat{X}$ is a firm restriction category satisfying (TR1), (TR5), and (TR6), then there is a semilattice morphism $\cO(I) \to \ISub(\cat{X})$.
\end{lemma}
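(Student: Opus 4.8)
The plan is to let axiom (TR1) manufacture the map directly. Given $e = \rest{e}\colon I \to I$ in $\cO(I)$, (TR1) supplies a subunit $s\colon S\rightarrowtail I$ together with a tensor-restriction point $d\colon I\to S$ such that $e = s\circ d$, and I would define the image of $e$ to be the subunit $s$, viewed as an element of $\ISub(\cat{X})$. The first thing to establish is that this is independent of the chosen factorisation, i.e.\ well defined. Suppose $e = s\circ d = s'\circ d'$ with $s\colon S\rightarrowtail I$ and $s'\colon S'\rightarrowtail I$ subunits and $d\colon I\to S$, $d'\colon I\to S'$ tensor-restriction points. Instantiate the lifting square \eqref{eq:diagonalfillin} of (TR5) with the tensor-restriction point $d$, the subunit $s'$, the map $d'\colon I\to S'$ along the left edge, and $s\colon S\to I$ along the right edge; the square commutes because $s'\circ d' = e = s\circ d$, so (TR5) yields $m\colon S\to S'$ with $s = s'\circ m$, that is $s\le s'$ in the order on $\ISub(\cat{X})$. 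Running (TR5) again with the roles of $(s,d)$ and $(s',d')$ interchanged gives $s'\le s$. By antisymmetry of the subunit order, $s$ and $s'$ represent the same subunit, so the assignment $\cO(I)\to\ISub(\cat{X})$ is well defined.

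It remains to check that this assignment is a morphism of semilattices. For the top element, the identity $1_I\colon I\to I$ is its own restriction inverse and is tensor-total, hence a tensor-restriction point, and $1_I = 1_I\circ 1_I$ exhibits it as factoring through the top subunit $1_I\colon I\to I$; so $1_I\in\cO(I)$ is sent to the top of $\ISub(\cat{X})$. For meets, recall that the meet in $\cO(I)$ is composition $e\wedge e' = e\circ e'$ (which again lies in $\cO(I)$, as $\cO(I)$ is closed under composition), and that the meet of subunits is $s\wedge s' = \lambda\circ(s\otimes s')\colon S\otimes S'\rightarrowtail I$. Writing $e = s\circ d$ and $e' = s'\circ d'$ as above, set $d'' = (d\otimes d')\circ\lambda^{-1}\colon I\to S\otimes S'$, which is a tensor-restriction point by (TR6). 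Using bifunctoriality of $\otimes$ together with the standard fact that for scalars $a\bullet b = \lambda\circ(a\otimes b)\circ\lambda^{-1}$ coincides with $a\circ b$,
\[
 (s\wedge s')\circ d'' \;=\; \lambda\circ(s\otimes s')\circ(d\otimes d')\circ\lambda^{-1} \;=\; \lambda\circ(e\otimes e')\circ\lambda^{-1} \;=\; e\bullet e' \;=\; e\circ e'.
\]
Thus $e\circ e' = (s\wedge s')\circ d''$ is a factorisation of $e\circ e'$ through the subunit $s\wedge s'$ via a tensor-restriction point, and by well-definedness the image of $e\circ e'$ is $s\wedge s'$, the meet of the images of $e$ and $e'$.

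I expect the well-definedness step to be the only real obstacle: one must present the two hypothesised factorisations as instances of the square \eqref{eq:diagonalfillin} and recognise that the ``$g = s\circ m$'' half of the conclusion of (TR5) is exactly one of the two subunit inequalities needed, with the mirror instance supplying the other. Once that is in place, preservation of top and of meets is a short computation relying only on (TR6) and on the identification of scalar multiplication with composition of scalars.
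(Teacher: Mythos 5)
Your proof is correct, and it departs from the paper's argument in the two places that matter. For well-definedness, the paper reads (TR1) as supplying the \emph{least} subunit through which $e$ factors and gets uniqueness of that subunit from uniqueness of least elements; you instead work with the literal existence statement of (TR1) and extract uniqueness from two mirror-image applications of the lifting property (TR5), using only the ``$g = s \circ m$'' half of its conclusion together with antisymmetry of the subunit order. For meets, the paper first establishes that $e \circ s_e$ is tensor-total and that $d_e \otimes d_f$ is a restriction isomorphism, then combines (TR6) with a lifting square and leastness to conclude $s_{e\wedge f} = s_e \wedge s_f$; you observe that $(s\wedge s')\circ\bigl((d\otimes d')\circ\lambda^{-1}\bigr)$ is itself a factorisation of $e\circ e'$ through a subunit via a tensor-restriction point, so meet-preservation falls out of your well-definedness step together with (TR6) and the scalar identity $e\bullet e' = e\circ e'$. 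Your version is leaner, stays closer to the axioms as literally stated, and shows as a byproduct that the subunit occurring in \emph{any} such factorisation of $e$ is determined up to isomorphism. What the paper's leastness-based formulation buys is the stronger property that $s_e$ is least among \emph{all} subunits through which $e$ factors, which is reused later (e.g.\ in the proof of Proposition~\ref{prop:factorisation}); but that is not part of the statement of this lemma, so your proof is complete as it stands.
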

\begin{proof}
  If $\rest{e}=e\colon I \to I$, axiom (TR1) provides a least subunit $s$ through which $e$ factors. Say $e=s \circ d$; because $s$ is monic, $d$ is unique. If $s$ and $s'$ are both least subunits through which $e$ factors, then $s \leq s'$ and $s' \leq s$ so $s'=s$. Thus also $s$ is unique, and we may call these morphisms $s_e$ and $d_e$. Hence $e \mapsto s_e$ is a well-defined function $\cO(I) \to \ISub(\cat{X})$. This function preserves top elements, $s_{\id} = 1$.
  Indeed, $\id[I]$ is the least subunit through which $\id[I]$ factors: if $\id[I]=s' \circ d'$, then $\id[I] \leq s'$.
  \[\begin{pic}[xscale=2,yscale=.8]
    \node (tl) at (0,2) {$I$};
    \node (tr) at (2,2) {$I$};
    \node (t) at (1,1.2) {$I$};
    \node (b) at (1,0) {$S'$};
    \draw[->] (tl) to node[above]{$\id$} (tr);
    \draw[->] (tl) to node[below]{$\id$} (t);
    \draw[>->] (t) to node[below]{$1$} (tr);
    \draw[->,dashed] (t) to node[right]{$d'$} (b);
    \draw[->] (tl) to[out=-90,in=180,looseness=.8] node[below=1mm]{$d'$} (b);
    \draw[>->] (b) to[out=0,in=-90,looseness=.8] node[below=1mm]{$s'$} (tr);
  \end{pic}\]
  % This function is monotone: if $e \leq f$, so $f \circ e=e$, then $s_e \leq s_f$ by (TR1).
  % \[\begin{pic}
  %   \node (l) at (0,0) {$I$};
  %   \node (r) at (4,0) {$I$};
  %   \node (m) at (2,-1) {$I$};
  %   \node (t) at (2,1) {$S_e$};
  %   \node (b) at (3,-2) {$S_f$};
  %   \draw[->,dashed] (t) to (b);
  %   \draw[->,halo] (l) to node[below]{$e$} (r);
  %   \draw[->] (l) to node[below]{$e$} (m);
  %   \draw[->,halo] (m) to node[below]{$f$} (r);
  %   \draw[->] (m) to node[left]{$d_f$} (b);
  %   \draw[>->] (b) to node[right]{$s_f$} (r);
  %   \draw[->] (l) to node[above]{$d_e$} (t);
  %   \draw[>->] (t) to node[above]{$s_e$} (r);
  % \end{pic}\]
  % It follows that $s_{e \wedge f} \leq s_e \wedge s_f$. 
To show that the function also preserves meets, first notice that $d_e \otimes d_f \colon I \otimes I \to S_e \otimes S_f$ is a restriction isomorphism, with restriction inverse $(e \circ s_e) \otimes (f \circ s_f)$.
  Now if $t \circ m = e \circ s_e$ for a subunit $t$ and some $m \colon S_e\to T$, then $t \circ m \circ d_e = e \circ s_e \circ d_e = e \circ e = e$ and so $s_e \leq t$ by (TR1). Therefore $e \circ s_e$ is tensor-total.
  Similarly $f \circ s_f$ is tensor-total.
  By (TR6) the tensor product $d_e \otimes d_f$ is a tensor-restriction point.
  \[\begin{pic}[xscale=4,yscale=2]
        \node (tl) at (0,1) {$I$};
        \node (t) at (.3,1) {$I\otimes I$};
        \node (tr) at (1,1) {$S_e \otimes S_f$};
        \node (bl) at (0,0) {$S_{e \wedge f}$};
        \node (br) at (1,0) {$I$};
        \draw[->] (tl) to node[above]{$\simeq$} (t);
        \draw[->] (t) to node[above]{$d_e \otimes d_f$} (tr);
        \draw[>->] (tr) to node[right]{$s_e \wedge s_f$} (br);
        \draw[->] (tl) to node[left]{$d_{e\wedge f}$} (bl);
        \draw[>->] (bl) to node[below]{$s_{e \wedge f}$} (br);
        \draw[->,dashed] (tr) to (bl);
  \end{pic}\]
  Now apply (TR5) to find $s_e \wedge s_f = s_{e \wedge f}$. So we conclude that we have a semilattice morphism $\cO(I) \to \ISub(\cat{X})$.
\end{proof}

Using (TR2) we can construct a semilattice morphism $\ISub(\cat{X}) \to \cO(I)$ in the other direction.

\begin{lemma}\label{lem:ISubtoO}
  If $\cat{X}$ is a firm restriction category satisfying (TR2), (TR6), and (TR7), then there is a semilattice morphism $\ISub(\cat{X}) \to \cO(I)$.
\end{lemma}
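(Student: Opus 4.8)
The plan is to send a subunit $s \colon S \rightarrowtail I$ to the scalar $\rest{d} = s \circ d \colon I \to I$, where $d \colon I \to S$ is a tensor-restriction point that is a restriction section of $s$ in the sense of (TR2) (note $\rest d \in \cO(I)$ automatically). The first task is well-definedness. I would start by showing that, for a fixed representing monomorphism $s$, such a $d$ is \emph{unique}. If $d,d' \colon I \to S$ are tensor-restriction points with $s \circ d = \rest d$ and $s \circ d' = \rest{d'}$, then (TR7) supplies a scalar $m \colon I \to I$ with $d' = d \circ m$, so $\rest{d'} = s \circ d' = (s \circ d) \circ m = \rest d \circ m$. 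Since scalars commute and $\rest d$ is idempotent,
\[
 \rest{d'} \circ \rest{d} = \rest{d} \circ m \circ \rest{d} = \rest{d} \circ \rest{d} \circ m = \rest{d} \circ m = \rest{d'},
\]
hence $\rest{d'} \leq \rest d$ in $\cO(I)$; the symmetric argument (with $d = d' \circ m'$ from (TR7)) gives $\rest d \leq \rest{d'}$, so $\rest d = \rest{d'}$, and then $s$ monic forces $d = d'$. Independence of the chosen representative is then routine: if $s' \circ m = s$ for an isomorphism $m \colon S \to S'$ and $d$ is the tensor-restriction point restriction section of $s$, then $m \circ d \colon I \to S'$ is again a tensor-restriction point (tensor-totality of the restriction inverse is stable under composition with isomorphisms, by the support characterisation~\cite[Section 6]{enriquemolinerheunentull:tensortopology} together with Proposition~\ref{prop:identityrestrict}), and $s' \circ (m \circ d) = s \circ d = \rest d = \rest{m \circ d}$, so by the uniqueness just proved it is \emph{the} tensor-restriction point restriction section of $s'$; thus both representatives yield the same scalar. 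This defines a function $\Phi \colon \ISub(\cat{X}) \to \cO(I)$.

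It remains to check that $\Phi$ preserves the top element and binary meets. The top subunit is the class of $\id[I]$, whose tensor-restriction point restriction section is $\id[I]$ itself (isomorphisms are tensor-restriction points), so $\Phi$ of the top is $\rest{\id[I]} = \id[I]$, the top of $\cO(I)$. For meets, let $d_s, d_t$ be the tensor-restriction point restriction sections of subunits $s \colon S \rightarrowtail I$ and $t \colon T \rightarrowtail I$, and recall from Proposition~2.9 that $s \wedge t = \lambda \circ (s \otimes t) \colon S \otimes T \rightarrowtail I$ represents their meet. By (TR6) the morphism $r := (d_s \otimes d_t) \circ \lambda^{-1} \colon I \to S \otimes T$ is a tensor-restriction point, and bifunctoriality of $\otimes$ together with (TR2) gives
\[
 (s \wedge t) \circ r = \lambda \circ \bigl((s \circ d_s) \otimes (t \circ d_t)\bigr) \circ \lambda^{-1} = \lambda \circ (\rest{d_s} \otimes \rest{d_t}) \circ \lambda^{-1} = \rest{d_s} \bullet \rest{d_t} = \rest{d_s} \circ \rest{d_t},
\]
the last equality because composition of scalars coincides with scalar multiplication. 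On the other hand, since restriction is monoidal, $\lambda$ is total, and $\rest{g \circ f} = \rest{\rest{g} \circ f}$ (\cite[Lemma 2.1(iii)]{cockettlack:restrictioncategories}), a short coherence computation yields $\rest{r} = \rest{(\rest{d_s} \otimes \rest{d_t}) \circ \lambda^{-1}} = \rest{\lambda^{-1} \circ (\rest{d_s} \circ \rest{d_t})} = \rest{d_s} \circ \rest{d_t}$ as well. Hence $(s \wedge t) \circ r = \rest{r}$, so $r$ is the (unique) tensor-restriction point restriction section of $s \wedge t$, and therefore $\Phi(s \wedge t) = \rest{r} = \rest{d_s} \circ \rest{d_t} = \Phi(s) \wedge \Phi(t)$ in $\cO(I)$.

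The main obstacle is precisely the uniqueness of the tensor-restriction point restriction section of a fixed monomorphism, i.e.\ the well-definedness of $\Phi$: (TR2) supplies such a morphism but not its uniqueness, and extracting uniqueness is exactly where (TR7) is needed, in conjunction with commutativity of scalars and the order on $\cO(I)$. Once that is established, the preservation of the top element and of meets is straightforward bookkeeping with the restriction identities, the scalar-multiplication identities of~\cite[Section 2.1]{heunenvicary:cqm}, and the tensor-topology definitions.
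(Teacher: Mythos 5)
Your proof is correct and follows essentially the same route as the paper's: (TR2) supplies the restriction section $d_s$, (TR7) gives well-definedness of $s \mapsto \rest{d_s}$, the top element is handled via $\id[I]$, and (TR6) handles meets by exactly the computation $(s\wedge t)\circ\bigl((d_s\otimes d_t)\circ\lambda^{-1}\bigr)=\rest{d_s}\bullet\rest{d_t}=\rest{(d_s\otimes d_t)\circ\lambda^{-1}}$. You are in fact slightly more careful than the paper, which only remarks that $d$ is ``unique up to scalar''; your argument that (TR7) combined with commutativity and idempotence of scalars forces $\rest{d}=\rest{d'}$ (hence $d=d'$ by monicity of $s$), and your check of independence from the chosen representative of the subobject, make that well-definedness fully explicit.
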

\begin{proof}
  If $s$ is a subunit, axiom (TR2) provides a tensor-restriction point $d \colon I \to S$ such that $s \circ d = \rest{d} = \rest{s \circ d}$. 
  By (TR7) this morphism $d$ is unique up to scalar, and we may call it $d_s$. Thus $s \mapsto e_s = s \circ d_s$ is a well-defined function $\ISub(\cat{X}) \to \cO(I)$. This function preserves top elements, $e_{\id} = \id[I]$, as $\id[I]$ is a tensor-restriction-point as well as the unique restriction-section of $\id[I]$: if $\id[I] \circ d' = \overline{d'}$, then $d'=\id[I] \circ \rest{d'}$.
  % \[\begin{pic}[xscale=2,yscale=1.6]
  %   \node (l) at (0,0) {$I$};
  %   \node (r) at (2,0) {$I$};
  %   \node (t) at (1,1) {$I$};
  %   \node (b) at (1,-1) {$I$};
  %   \draw[>->] (b) to node[right=2mm]{$\rest{d'}$} (r);
  %   \draw[->,dashed] (b) to node[left=2mm]{$d'=\rest{d'}$} (l);
  %   \draw[>->] (t) to node[right=2mm]{$\id[I]$} (r);
  %   \draw[->] (b) to node[right=-.5mm,yshift=-7mm]{$d'$} (t);
  %   \draw[->,halo] (l) to node[right=1mm,yshift=2mm]{$\rest{\id[I]}=\id[I]$} (r);
  %   \draw[->,dashed] (l) to node[left=1mm]{$\id[I]$} (t);
  % \end{pic}\]
This function also preserves meets: if $r$ and $s$ are subunits, then 
  $d = (d_r \otimes d_s) \circ \lambda^{-1} \colon I \to R \otimes S$ is a tensor-restriction point by (TR6), and is a restriction-section of $r \wedge s$ because $(r \wedge s) \circ d = (r \circ d_r) \circ (s \circ d_s) = \rest{d_r} \bullet \rest{d_s} = \rest{d}$. We conclude there is a semilattice morphism $\ISub(\cat{X}) \to \cO(I)$.
  Hence $e_r \wedge e_s = e_{r \wedge s}$.
\end{proof}

In fact, in the presence of (TR5), the semilattice morphisms in the previous two lemmas are each other's inverse.

\begin{proposition}\label{prop:OisISub}
  If $\cat{X}$ is a firm restriction category satisfying (TR1), (TR2), (TR5), (TR6), and (TR7), then there is a semilattice isomorphism $\cO(I) \simeq \ISub(\cat{X})$.
\end{proposition}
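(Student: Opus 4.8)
The plan is to show that the two semilattice morphisms already constructed are mutually inverse: the map $\phi \colon \cO(I) \to \ISub(\cat{X})$, $e \mapsto s_e$, from Lemma~\ref{lem:OtoISub}, and the map $\psi \colon \ISub(\cat{X}) \to \cO(I)$, $s \mapsto e_s = s \circ d_s$, from Lemma~\ref{lem:ISubtoO}. Since each already preserves finite meets and top elements, once they are shown to be inverse bijections they are automatically inverse semilattice isomorphisms, which is the claim.

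First I would check that $\phi \circ \psi$ is the identity on $\ISub(\cat{X})$. Fix a subunit $s \colon S \rightarrowtail I$. The equation $e_s = s \circ d_s$ exhibits $e_s$ as factoring through $s$ via the tensor-restriction point $d_s$, so $s_{e_s} \leq s$. For the reverse inequality, suppose $e_s$ also factors as $e_s = t \circ d'$ through a subunit $t \colon T \rightarrowtail I$. Then $t \circ d' = s \circ d_s$, and applying the lifting property (TR5) to the tensor-restriction point $d_s \colon I \to S$ and the subunit $t$ produces $m \colon S \to T$ with $d' = m \circ d_s$ and $s = t \circ m$, hence $s \leq t$. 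Thus $s$ is the least subunit through which $e_s$ factors, that is, $s_{e_s} = s$.

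Next I would check that $\psi \circ \phi$ is the identity on $\cO(I)$. Fix $e = \rest{e} \colon I \to I$ and write $e = s_e \circ d_e$ as in (TR1), with $d_e \colon I \to S_e$ a tensor-restriction point. Since $s_e$ is monic it is total, so $\rest{s_e} = \id[I]$, and using \cite[Lemma 2.1(iii)]{cockettlack:restrictioncategories} we get $\rest{d_e} = \rest{\rest{s_e} \circ d_e} = \rest{s_e \circ d_e} = \rest{e} = e = s_e \circ d_e$; hence $d_e$ is itself a tensor-restriction point that is a restriction-section of $s_e$. By the uniqueness clause of Lemma~\ref{lem:ISubtoO} (which rests on (TR7)), the section $d_{s_e}$ chosen there agrees with $d_e$ up to a scalar, and applying (TR7) in both directions shows this scalar is invertible: $d_{s_e} = d_e \circ m$ for an isomorphism $m \colon I \to I$. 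Therefore $e_{s_e} = s_e \circ d_{s_e} = e \circ m = e \bullet m$. Since $e_{s_e} \in \cO(I)$, taking restrictions and using that $\rest{(-)}$ restricted to scalars is a monoid morphism compatible with $\bullet$ (the computation in the proof of Lemma~\ref{lemma:scalarrestriction1}), together with totality of isomorphisms, yields $e_{s_e} = \rest{e_{s_e}} = \rest{e \bullet m} = \rest{e} \bullet \rest{m} = e \bullet \id[I] = e$.

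I expect the second round-trip to be the main obstacle. The issue is that (TR2), hence Lemma~\ref{lem:ISubtoO}, only determines the restriction-section $d_{s_e}$ of $s_e$ up to a scalar; one must invoke the uniqueness in (TR7) to see that the connecting scalar is invertible, and only then does totality of isomorphisms allow one to conclude that it acts trivially on the restriction idempotent $e$. The first round-trip, by contrast, is an immediate application of (TR5).
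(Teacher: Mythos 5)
Your proposal is correct and follows essentially the same route as the paper: it shows the semilattice morphisms of Lemmas~\ref{lem:OtoISub} and~\ref{lem:ISubtoO} are mutually inverse, using (TR5) to get $s_{e_s}=s$ and (TR7) to get $e_{s_e}=e$. The only differences are cosmetic --- you verify $s_{e_s}=s$ by checking that $s$ itself satisfies the leastness property rather than exhibiting an explicit isomorphism $S \cong S_{e_s}$, and you spell out the final scalar computation for $e_{s_e}=e$ that the paper leaves implicit.
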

\begin{proof}
  We will prove that the functions of Lemmas~\ref{lem:OtoISub} and~\ref{lem:ISubtoO} are inverses. Let $s$ be a subunit, and $\rest{e}=e\colon I \to I$ be a restriction idempotent scalar.
  
  On the one hand, (TR1) and (TR5) guarantee that $s_{e_s} = s$.
  \[\begin{pic}[xscale=2,yscale=1.6]
    \node (l) at (0,0) {$I$};
    \node (r) at (2,0) {$I$};
    \node (t) at (1,1) {$S_{\rest{d_s}}$};
    \node (b) at (1.2,-1) {$S$};
    \draw[>->] (b) to node[right]{$s$} (r);
    \draw[->] (l) to node[left]{$d_s$} (b);
    \draw[->] (l) to node[left=1mm]{$d_{\rest{d_s}}$} (t);
    \draw[>->] (t) to node[right]{$s_{\rest{d_s}} = s_{e_s}$} (r);
    \draw[->] (l) to node[left=3mm,yshift=3mm]{$\rest{d_s}$} (r);
    \draw[->,dashed,halo] (t) to (b);
  \end{pic}\]
  The dashed morphism $m$ exists by (TR5) and is unique because $s$ is monic.
  But there is also a morphism $n$ in the opposite direction. 
  Because $m \circ n$ is unique by (TR5) it has to be the identity, and similarly for $n \circ m$. Hence $m$ is an isomorphism, and $s_{e_s}=s$.
  % So if $e \leq e_s$, then $s_e \leq s_{e_s}$ by the monotonicity of Lemma~\ref{lem:OtoISub}, and therefore $s_e \leq s$.

  % On the other hand, the universal property of (TR2) shows that $e_{s_e} \geq e$.
  On the other hand, $d_e$ and $d_{s_e}$ have the same codomain.
  \[\begin{pic}[xscale=2,yscale=1.6]
    \node (l) at (0,0) {$I$};
    \node (r) at (2,0) {$I$};
    \node (t) at (1,1) {$I$};
    \node (b) at (1.2,-1) {$S_e$};
    \draw[>->] (b) to node[right]{$s_e$} (r);
    \draw[->] (l) to node[left]{$d_e$} (b);
    \draw[>->] (t) to node[right=1mm]{$\rest{d_{s_e}} = e_{s_e}$} (r);
    \draw[->] (l) to node[left=3mm,yshift=2mm]{$e=\rest{e}$} (r);
    \draw[->,halo] (t) to node[right=1mm,yshift=-5mm]{$d_{s_e}$} (b);
    % \draw[->,dashed] (l) to node[left=1mm]{$\rest{d_e}=e$} (t);
    \draw[->,dashed] (l) to (t);
  \end{pic}\]
  % So if $s_e \leq s$, then $e_{s_e} \leq e_s$ by the monotonicity of Lemma~\ref{lem:ISubtoO}, and hence $e \leq e_s$.
  % \todo{Why is $e_s \leq e$? Adapt (TR2)?}
  Hence the dashed morphism exists by (TR7), and similarly there is a unique morphism in the opposite direction, showing that $e_{s_e}=e$. 
  % Thus there is a semilattice isomorphism $\cO(I) \simeq \ISub(\cat{X})$. 
\end{proof}

It follows from (TR3) that $\cO(I) \simeq \ISub(\cat{X})$ for any object $X$ in a tensor-restriction category.

From now on, for a restriction idempotent scalar $e \colon I \to I$, write $s_e$ for the least subunit through which $e$ factors, and $d_e$ for the (unique) mediating map.
\[\begin{pic}
  \node (l) at (0,1) {$I$};
  \node (r) at (3,1) {$I$};
  \node (b) at (1.5,0) {$S_e$};
  \draw[->] (l) to node[above]{$e=\rest{e}$} (r);
  \draw[->,dashed] (l) to node[left=1mm]{$d_e$} (b);
  \draw[>->,dashed] (b) to node[right=1mm]{$s_e$} (r);
\end{pic}\]
By (TR3) and Lemma \ref{lemma:scalarrestriction2}, the notion of restriction from the restriction category agrees with the notion of restriction from the firm monoidal category.
Write $e_f$ for the restriction idempotent scalar satisfying $\rest{f} = e_f \bullet \mathrm{dom}(f)$, and $s_f$ and $d_f$ for the corresponding subunit and mediating map as above.
Combining (TR1) and (TR3) then factors any morphism as follows:
\[\begin{pic}[xscale=4,yscale=1.3]
  \node (tl) at (0,2) {$X$};
  \node (l) at (0,1) {$X \otimes I$};
  \node (bl) at (0,0) {$X \otimes S_f$};
  \node (tr) at (1,2) {$Y$};
  \node (r) at (1,1) {$Y \otimes I$};
  \node (br) at (1,0) {$Y \otimes S_f$};
  \draw[->] (tl) to node[above]{$f$} (tr);
  \draw[->] (l) to node[above]{$f \otimes e_f$} (r);
  \draw[->] (bl) to node[below]{$f \otimes S_f$} (br);
  \draw[->] (tl) to node[left]{$\lambda^{-1}$} (l);
  \draw[->] (r) to node[right]{$\lambda$} (tr);
  \draw[->] (l) to node[left]{$X \otimes d_f$} (bl);
  \draw[->] (br) to node[right]{$Y \otimes s_f$} (r);
\end{pic}\]

(TR4) says that the notion of totality from the restriction category agrees with the notion of totality from the firm monoidal category as in Definition~\ref{def:tensortotal}, and that we may replace the bottom map in the previous diagram by a total one.

\begin{proposition}\label{prop:factorisation}
  Any morphism $f \colon X \to Y$ in a tensor-restriction category factors via the restriction-total morphism $T(f) = \rho\circ (Y \otimes s_f) \circ \total{f \otimes S_f} \colon X \otimes S_f \to Y$.
\end{proposition}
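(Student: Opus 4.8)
The plan is to read the factorisation off the diagram displayed just before the statement, whose only non-formal ingredient is that its bottom edge $f \otimes S_f \colon X \otimes S_f \to Y \otimes S_f$ is \emph{tensor-total}; granting that, (TR4) replaces $f \otimes S_f$ by the restriction-total $\total{f \otimes S_f}$, composing with $Y \otimes s_f$ and $\rho$ produces $T(f)$, and the factorisation of $f$ through $T(f)$ then falls out of the diagram.

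So first I would show that $f \otimes S_f$ is tensor-total. Recall $\rest{f} = e_f \bullet X$ by (TR3) and $e_f = s_f \circ d_f$ by (TR1); consequently $f = f \circ \rest{f} = \rho \circ (Y \otimes s_f) \circ (f \otimes S_f) \circ (X \otimes d_f) \circ \rho^{-1}$, so $f$ tensor-restricts to $s_f$. Moreover $\id[X \otimes S_f]$ tensor-restricts to $s_f$ by Proposition~\ref{prop:identityrestrict}, since $S_f \otimes s_f$ — hence also $(X \otimes S_f) \otimes s_f$ up to the coherence isomorphisms — is invertible. Now if $f \otimes S_f$ tensor-restricts to an arbitrary subunit $t \colon T \rightarrowtail I$, then — tensor-restriction to a subunit being stable under pre- and post-composition, via naturality of $\rho \circ (- \otimes t)$ — so does $f$, and the minimality clause of (TR1) forces $s_f \leq t$, say $s_f = t \circ m$. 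Then $S_f \otimes s_f = (S_f \otimes t) \circ (S_f \otimes m)$ makes $S_f \otimes t$ a split epimorphism, which is also monic by firmness, hence invertible; so $\id[X \otimes S_f]$ tensor-restricts to $t$ by Proposition~\ref{prop:identityrestrict}, and $f \otimes S_f$ is tensor-total by Definition~\ref{def:tensortotal}.

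It remains to assemble the factorisation. By (TR4) there is a restriction-total $\total{f \otimes S_f}$ with $f \otimes S_f = \total{f \otimes S_f} \circ \rest{f \otimes S_f}$, and $T(f) = \rho \circ (Y \otimes s_f) \circ \total{f \otimes S_f}$ is restriction-total, being a composite of $\total{f \otimes S_f}$, the restriction-total $Y \otimes s_f$ (both $Y$ and the monomorphism $s_f$ are restriction-total, and restriction preserves $\otimes$), and the isomorphism $\rho$. Finally, $\rest{f \otimes S_f} = \rest{f} \otimes S_f = X \otimes (e_f \bullet S_f)$ and $(e_f \bullet S_f) \circ d_f = d_f \circ e_f = d_f$, the last step because $e_f = s_f \circ d_f = \rest{d_f}$ ($s_f$ being monic) together with (R1); hence $\rest{f \otimes S_f} \circ (X \otimes d_f) = X \otimes d_f$, and using also $(Y \otimes s_f) \circ (f \otimes S_f) \circ (X \otimes d_f) = f \otimes e_f$ one gets
\[
  T(f) \circ (X \otimes d_f) \circ \rho^{-1} = \rho \circ (f \otimes e_f) \circ \rho^{-1} = f \circ \rest{f} = f ,
\]
which is the claimed factorisation.

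The hard part is the tensor-totality of $f \otimes S_f$, and inside it the identification of $s_f$ with the tensor-topological support of $f$ — that $f$ tensor-restricts to $t$ \emph{only} if $s_f \leq t$ — which is exactly where the minimality clause of (TR1) is indispensable; the rest is diagram-chasing and routine bookkeeping with scalar multiplication and the coherence isomorphisms.
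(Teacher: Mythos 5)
Your overall architecture matches the paper's: show that $f \otimes S_f$ is tensor-total, invoke (TR4), and then read the factorisation off the diagram displayed before the statement; your concluding bookkeeping ($\rest{f \otimes S_f} \circ (X \otimes d_f) = X \otimes d_f$, totality of $T(f)$, the final composite) is fine. The gap is exactly where you locate the difficulty: the inference ``$f$ tensor-restricts to $t$, and the minimality clause of (TR1) forces $s_f \leq t$''. The minimality that (TR1) supplies (as extracted in Lemma~\ref{lem:OtoISub}) concerns factorisations of the \emph{scalar} $e_f \colon I \to I$ through the monomorphism $t \colon T \rightarrowtail I$, i.e.\ $e_f = t \circ m$ for some $m \colon I \to T$. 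That $f$ factors through $Y \otimes t$ is a different, and in general strictly weaker, condition, and you supply no bridge between the two: from $f = \rho \circ (Y \otimes t) \circ h$ one only gets $\rest{f} = \rest{h}$ (since $\rho \circ (Y \otimes t)$ is restriction-total), which says nothing about $e_f$ factoring through $t \colon T \rightarrowtail I$. Concretely, in $\S[\cat{Mod}_R]$ a morphism $[J,0]$ with zero underlying map tensor-restricts to the zero subunit no matter which idempotent ideal $J$ indexes it, while $s_{[J,0]} = J$; so the implication ``$f$ tensor-restricts to $t$ implies $s_f \leq t$'' is simply not available.

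The paper closes this gap by \emph{not} discarding the mediating map: writing $g \colon X \otimes S_f \to Y \otimes S_f \otimes R$ for the witness that $f \otimes S_f$ tensor-restricts to $r$, it computes $\rest{f} = \rest{g \circ (X \otimes d_f) \circ \rho^{-1}}$, applies (R4) and the uniqueness clause of (TR3) to obtain identities among the restriction idempotent scalars $e_f$, $e_g$ and the scalar attached to $r$, and only then invokes leastness of $s_f$ to conclude $s_f \leq s_g \wedge r$ and hence $X \otimes S_f \otimes R \simeq X \otimes S_f$. In other words, the factorisation of the scalar $e_f$ through $r$ that leastness requires is manufactured out of the restriction structure of $g$, not read off from the tensor factorisation of $f$. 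Your reduction from $f \otimes S_f$ to $f$ (via stability of tensor-restriction under pre- and post-composition, which is itself correct) throws away precisely the data this computation needs, so to repair the argument you would have to keep $g$ and reproduce a chain of this kind rather than appeal to (TR1) directly.
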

\begin{proof}
  Suppose that $f \otimes S_f \colon X \otimes S_f \to Y \otimes S_f$ restricts to a subunit $r$, say via $g \colon X \otimes S_f \to Y \otimes S_f \otimes R$.
  Then:
  \begin{align*}
    \rest{f} 
    & = \rest{\rho \circ (Y \otimes (s_f \wedge r)) \circ g \circ (X \otimes d_f) \circ \rho^{-1}} \\
    & = \rest{(Y \otimes (s_f \wedge r)) \circ g \circ (X \otimes d_f) \circ \rho^{-1}} &&\text{(by~\cite[Lem. 2.1.iii,vi]{cockettlack:restrictioncategories})} \\ 
    & = \rest{\rest{(Y \otimes (s_f \wedge r))} \circ g \circ (X \otimes d_f \circ \rho^{-1})} &&\text{(by~\cite[Lem. 2.1.iii]{cockettlack:restrictioncategories})} \\
    & = \rest{(\rest{Y} \otimes \rest{(s_f \wedge r)}) \circ g \circ (X \otimes d_f \circ \rho^{-1})} \\    
    & = \rest{g \circ (X \otimes d_f) \circ \rho^{-1}} &&\text{(by~\cite[Lem. 2.1.vi]{cockettlack:restrictioncategories})} \\
  \intertext{Therefore:}
    \rest{g} \circ (X \otimes d_f) \circ \rho^{-1} 
    & = (X \otimes d_f) \circ \rho^{-1} \circ \rest{g \circ (X \otimes d_f) \circ \rho^{-1}} &&\text{(by R4)} \\
    & = (X \otimes d_f)\circ \rho^{-1} \circ \rest{f} 
  \end{align*}
  Because $\rest{g} = e_g \bullet (X \otimes S_f)$, now $(X \otimes s_f) \circ \rest{g} \circ (X \otimes d_f) = e_g \bullet (X \otimes s_f) \circ (X \otimes d_f)$. It follows that:
  \[e_f = e_f \bullet e_g = s_g \circ d_g \circ s_f \circ d_f = (s_f \wedge s_g) \circ (\id \otimes d_g) \circ \rho^{-1} \circ d_f\]
  But $s_f$ is the least subunit through which $e_f$ factors, so $s_f \leq s_f \wedge s_g$.
  Because $\rest{g} = \rest{(X \otimes S_f \otimes r) \circ g}$ by firmness, similarly $e_f = e_r \bullet e_g \bullet e_f$, so $s_f \leq s_g \wedge r$.
  Hence $X \otimes S_f \otimes R \simeq X \otimes S_f$, so that $X \otimes S_f$ restricts to $r$~\cite[Proposition 4.2]{enriquemolinerheunentull:tensortopology}.
  So $f \otimes S_f$ is tensor-total, and $f$ factors as:
  \[\begin{pic}[xscale=2,yscale=1.5]
    \node (Tl) at (0,2.5) {$X$};
    \node (Tr) at (3,2.5) {$Y$};
    \node (tl) at (0,1.75) {$X \otimes I$};
    \node (tr) at (3,1.75) {$Y \otimes I$};
    \node (l) at (0,1) {$X \otimes S_f$};
    \node (r) at (3,1) {$Y \otimes S_f$};
    \node (br) at (3,0) {$X \otimes S_f$};
    \draw[->] (Tl) to node[above]{$f$} (Tr);
    \draw[->] (Tl) to node[left]{$\rho^{-1}$} (tl);
    \draw[->] (tr) to node[right]{$\rho$} (Tr);
    \draw[->] (tl) to node[above]{$f \otimes e_f$} (tr);
    \draw[->] (tl) to node[left]{$X \otimes d_f$} (l);
    \draw[->] (l) to node[above]{$f \otimes S_f$} (r);
    \draw[->] (r) to node[right]{$Y \otimes s_f$} (tr);
    \draw[->] (l) to node[below left]{$\rest{f} \otimes S_f$} (br);
    \draw[->] (br) to node[right]{$\total{f \otimes S_f}$} (r);
  \end{pic}\]
  As the right vertical morphisms are restriction-total, so is their composition.
\end{proof}

% \begin{corollary}
%   Let $r,s$ be subunits in a firm monoidal category $\cat{C}$.
%   A restricted isomorphism $[s,g] \colon R \to I$ in $\S[\cat{C}]$ is tensor-total if and only if $r \leq s$, or equivalently, $g$ is invertible in $\cat{C}$.
% \end{corollary}
% \begin{proof}
%   If $[s,g]$ is tensor-total, then $R \otimes S \simeq R$ by Lemma~\ref{lem:tensortotalinS}, and so $r \leq s$.

%   For the converse, assume $r \leq s$. This is equivalent to $R \otimes s \colon R \otimes S \to R \otimes I$ being invertible~\cite[Lemma~2.4]{enriquemolinerheunentull:tensortopology}.
%   Suppose that $[s,g]$ tensor-restricts to a subunit $[\id[I],t] \colon T \rightarrowtail I$. Then $[s,g] = [\id[I],t] \circ [s,h]$ for some $h \colon R \otimes S \to T$.
%   Take $m=h \circ (R \otimes s)^{-1} \circ f \colon S \to R$.
%   It follows from Proposition~\ref{prop:restrictionisosinS} that $t \circ m = s$, hence $s \leq t$.
%   Therefore $[s,g]$ is tensor-total.

%   Combining invertibility of~\eqref{eq:restrictedisomorphism} and $R \otimes s$ establishes the further claim about $g$ being invertible.
% \end{proof}

\begin{lemma}
  If $\cat{X}$ is a tensor-restriction category, then $\T[\cat{X}]$ is a firm monoidal category.
\end{lemma}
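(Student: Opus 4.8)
The plan is to equip $\T[\cat{X}]$ with the monoidal structure inherited along the inclusion $\T[\cat{X}] \hookrightarrow \cat{X}$, and then to identify the subunits of $\T[\cat{X}]$ with those of $\cat{X}$, so that firmness of $\T[\cat{X}]$ reduces to firmness of $\cat{X}$.

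First I would check that $\T[\cat{X}]$ is a braided monoidal category. Restriction-total morphisms are closed under composition because $\rest{g \circ f} = \rest{\rest{g} \circ f}$~\cite[Lemma~2.1.iii]{cockettlack:restrictioncategories}, and closed under the tensor product of $\cat{X}$ because, for restriction-total $f$ and $g$, one has $\rest{f \otimes g} = \rest{f} \otimes \rest{g} = \id \otimes \id = \id$ since $\cat{X}$ is a monoidal restriction category. As isomorphisms are restriction-total~\cite[Lemma~2.2.i]{cockettlack:restrictioncategories}, the associators, unitors and braiding of $\cat{X}$ all lie in $\T[\cat{X}]$, and the triangle, pentagon and hexagon equations hold there because they hold in $\cat{X}$. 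Hence $\T[\cat{X}]$ is a braided monoidal category, and the inclusion into $\cat{X}$ is strict monoidal.

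Next I claim $\ISub(\T[\cat{X}]) = \ISub(\cat{X})$. If $s \colon S \rightarrowtail I$ is a subunit of $\cat{X}$, then $s$ is monic, hence restriction-total, hence a morphism of $\T[\cat{X}]$; it remains monic in the subcategory, and $s \otimes S$, being invertible in $\cat{X}$ with its (iso, hence restriction-total) inverse again in $\T[\cat{X}]$, is invertible in $\T[\cat{X}]$, so $s$ is a subunit of $\T[\cat{X}]$. Conversely, let $s \colon S \rightarrowtail I$ be a subunit of $\T[\cat{X}]$; the only point to check is that $s$ is monic in $\cat{X}$, since it is already restriction-total and $s \otimes S$ is already invertible in $\cat{X}$. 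Suppose $s \circ f = s \circ g$ for $f, g \colon A \to S$ in $\cat{X}$. As $s$ is restriction-total, $\rest{f} = \rest{s \circ f} = \rest{s \circ g} = \rest{g}$, so $f$ and $g$ share the data $S_f = S_g$ and $d_f = d_g$ entering the canonical factorisation of Proposition~\ref{prop:factorisation}; writing $\iota$ for the common left factor, which depends only on $\rest{f}$, we have $f = T(f) \circ \iota$ and $g = T(g) \circ \iota$. A short computation with naturality of $\rho$ and the uniqueness of the restriction-total replacement in (TR4) identifies $T(s \circ f)$ with $s \circ T(f)$ and $T(s \circ g)$ with $s \circ T(g)$. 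Since $s \circ f$ and $s \circ g$ are the same morphism, $T(s \circ f) = T(s \circ g)$, hence $s \circ T(f) = s \circ T(g)$ as restriction-total morphisms $A \otimes S_f \to I$; monicity of $s$ in $\T[\cat{X}]$ gives $T(f) = T(g)$, whence $f = T(f) \circ \iota = T(g) \circ \iota = g$. So $s$ is monic in $\cat{X}$, hence a subunit of $\cat{X}$.

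Finally, since $\cat{X}$ is firm and its monoidal structure restricts to that of $\T[\cat{X}]$, for subunits $s \colon S \rightarrowtail I$ and $t \colon T \rightarrowtail I$ of $\T[\cat{X}]$ — which are subunits of $\cat{X}$ by the previous paragraph — the morphism $s \otimes T$ is monic in $\cat{X}$, hence monic in $\T[\cat{X}]$. Therefore $\T[\cat{X}]$ is a firm monoidal category, and by Proposition~\ref{prop:OisISub} its semilattice of subunits is $\cO(I)$. I expect the main obstacle to be the inclusion $\ISub(\T[\cat{X}]) \subseteq \ISub(\cat{X})$: upgrading a $\T[\cat{X}]$-monomorphism into $I$ to a genuine monomorphism in $\cat{X}$, which forces one to use the factorisation of Proposition~\ref{prop:factorisation} and the uniqueness clauses of the tensor-restriction axioms rather than any formal fact about subcategories.
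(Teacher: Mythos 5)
Your proposal is correct and follows essentially the same route as the paper: establish that $\T[\cat{X}]$ inherits the braided monoidal structure, then upgrade a subunit of $\T[\cat{X}]$ to a monomorphism in $\cat{X}$ by using $\rest{f}=\rest{s\circ f}=\rest{s\circ g}=\rest{g}$, the factorisation of Proposition~\ref{prop:factorisation}, and the uniqueness clause of (TR4) to cancel $s$, and finally deduce firmness of $\T[\cat{X}]$ from firmness of $\cat{X}$. The only cosmetic difference is that you identify $T(s\circ f)$ with $s\circ T(f)$ directly, whereas the paper first computes $T(s)=s\wedge s$; both rest on the same (TR4) uniqueness argument.
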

\begin{proof}
  First, $\T[\cat{X}]$ is a well-defined braided monoidal category: if $f$ and $g$ are restriction-total, then $\rest{f \otimes g}=\rest{f \otimes g}$ makes $f \otimes g$ restriction-total too, and all (coherence) isomorphisms are restriction-total.

  Let $s \colon S \to I$ be a subunit in $\T[\cat{X}]$. Then $S \otimes s \colon S \otimes S \to S \otimes I$ is invertible in $\T[\cat{X}]$ and hence in $\cat{X}$. 
  Suppose that $f,g \colon A \to S$ in $\cat{X}$ satisfy $s \circ f = g \circ s$.
  Then $\rest{f} = \rest{s \circ f} = \rest{s \circ g} = \rest{g}$, and so $d_f = d_g$.
  Observe that $s$ is tensor-total in $\cat{X}$, because if $s = t \circ m$ for a subunit $t$ in $\cat{X}$, then $S \otimes t \colon S \otimes T \to S$ is an isomorphism with inverse $(S \otimes m) \circ (S \otimes s)^{-1}$.
  Now $s = s \circ \id[S] = s \circ \rest{s}$, so $\total{s} = s$ in $\cat{X}$.
  Therefore $T(s) = (\id \otimes s) \circ \total{s \otimes S} = s \wedge s \colon S \otimes S \to I$.
  It follows from (TR4) that $T(s) \circ T(f) = T(s \circ f) = T(s \circ g) = T(s) \circ T(g)$ and so $T(f)=T(g)$.
  Thus $f = T(f) \circ (\rest{f} \otimes d_f) = T(g) \circ (\rest{g} \otimes d_g) = g$.
  So $s$ is monic and hence a subunit in $\cat{X}$, too.
  Thus $\T[\cat{X}]$ is firm because $\cat{X}$ is firm.
\end{proof}

We now state the main result of this section.

\begin{theorem}\label{thm:main}
  If $\cat{X}$ is a tensor-restriction category, then there is a firm monoidal restriction category isomorphism $\cat{X} \simeq \S[\T[\cat{X}]]$.
\end{theorem}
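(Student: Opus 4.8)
The plan is to turn the factorisation of Proposition~\ref{prop:factorisation} into an identity-on-objects comparison functor $\Phi\colon\cat{X}\to\S[\T[\cat{X}]]$ (recall $\T[\cat{X}]$ is firm monoidal by the preceding lemma, so $\S[-]$ applies) and to show it is invertible. Given $f\colon X\to Y$, let $e_f=\rest{e_f}\colon I\to I$ be the restriction idempotent scalar with $\rest{f}=e_f\bullet X$ (unique by (TR3)), let $s_f\colon S_f\rightarrowtail I$ be the least subunit through which $e_f$ factors with mediating tensor-restriction point $d_f\colon I\to S_f$ (from (TR1)), and set $\Phi(f)=[s_f,T(f)]$ with $T(f)=\rho\circ(Y\otimes s_f)\circ\total{f\otimes S_f}\colon X\otimes S_f\to Y$. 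This is a legitimate morphism of $\S[\T[\cat{X}]]$ since $T(f)$ is restriction-total, hence a morphism of $\T[\cat{X}]$, and $s_f$ is a subunit of $\T[\cat{X}]$; it does not depend on a choice of representative because $e_f,s_f,d_f,T(f)$ are all canonically determined. The candidate inverse is the identity-on-objects assignment $\Psi\colon\S[\T[\cat{X}]]\to\cat{X}$ sending $[s,g]\colon X\to Y$, with $g\colon X\otimes S\to Y$ restriction-total, to $g\circ(X\otimes d_s)\circ\rho^{-1}$, where $d_s\colon I\to S$ is the tensor-restriction point that is a restriction section of $s$ (from (TR2), unique up to scalar by (TR7)); this is well defined on equivalence classes by transporting $d_s$ along the comparison isomorphism of representatives.

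First I would check that $\Phi$ is a restriction- and monoidal-preserving functor. Identities are immediate: $\id[X]$ is restriction-total, so $s_{\id}=1$ and $T(\id)=\rho$, the identity of $\S[\T[\cat{X}]]$ at $X$. For composites the crucial identity is $s_{g\circ f}=s_f\wedge s_g$, which follows from (R4), firmness, and the fact (Lemma~\ref{lem:OtoISub}) that $e\mapsto s_e$ is a semilattice morphism; then uniqueness in (TR4) forces $T(g\circ f)$ to be the $\S$-composite of $T(f)$ and $T(g)$. Preservation of restriction uses $e_{\rest f}=e_f$, so $\Phi(\rest f)$ and $\rest{\Phi(f)}$ carry the same subunit $s_f$ and matching restriction-total parts; preservation of $\otimes$ uses $\rest{f\otimes g}=\rest{f}\otimes\rest{g}$, whence $e_{f\otimes g}=e_f\wedge e_g$ and $s_{f\otimes g}=s_f\wedge s_g$ and $T(f\otimes g)=T(f)\otimes T(g)$; the coherence isomorphisms of $\cat{X}$, being total, are sent to those of $\S[\T[\cat{X}]]$.

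Next I would verify $\Phi$ and $\Psi$ are mutually inverse. A preliminary observation does most of the work: $u_f:=(X\otimes d_f)\circ\rho^{-1}\colon X\to X\otimes S_f$ is a restriction isomorphism with $\rest{u_f}=e_f\bullet X=\rest{f}$ (using $\rest{d_f}=e_f$), hence $(\rest{f}\otimes S_f)\circ u_f=u_f$; substituting this into the factorisation formula $f=T(f)\circ(\rest{f}\otimes S_f)\circ(X\otimes d_f)\circ\rho^{-1}$ of Proposition~\ref{prop:factorisation} gives $f=T(f)\circ u_f$, and since $d_{s_f}=d_f$ (both satisfy $s_f\circ(-)=e_f$ and $s_f$ is monic) this reads $\Psi(\Phi(f))=f$. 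For $\Phi(\Psi([s,g]))=[s,g]$, put $f=\Psi([s,g])$; then $\rest{f}=e_s\bullet X$, so $e_f=e_s$, whence $s_f=s$ by Proposition~\ref{prop:OisISub} and $d_f=d_s$, and $T(f)\circ u_f=f=g\circ u_f$ with $u_f=u_s$. It then remains to cancel the restriction isomorphism $u_s$ on the right to conclude $T(f)=g$, and hence $\Phi(\Psi([s,g]))=[s_f,T(f)]=[s,g]$.

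That cancellation is the main obstacle: one must show that for a subunit $s$ and restriction-total $h,h'\colon X\otimes S\to Y$, the equation $h\circ(e_s\bullet(X\otimes S))=h'\circ(e_s\bullet(X\otimes S))$ forces $h=h'$ (precomposing $h\circ u_s=h'\circ u_s$ with $u_s^\circ$ reduces to this, since $\rest{u_s^\circ}=e_s\bullet(X\otimes S)$). I expect to prove it by rewriting $e_s\bullet(X\otimes S)=(X\otimes\mu_S)\circ\alpha\circ\bigl((X\otimes S)\otimes d_s\bigr)\circ\rho^{-1}$, where $\mu_S=\rho\circ(S\otimes s)\colon S\otimes S\to S$ is the isomorphism supplied by the subunit $s$, and then invoking (TR5) (the diagonal fill-in of $d_s$ against $s$) together with restriction-totality of $h$ and $h'$ to see that the composite with $h$ determines $h$ through the isomorphism $X\otimes\mu_S$. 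Granting this, $\Phi$ is an identity-on-objects isomorphism of firm monoidal restriction categories, so $\cat{X}\simeq\S[\T[\cat{X}]]$; together with the isomorphism $\cat{C}\cong\T[\S[\cat{C}]]$ of Proposition~\ref{prop:totalinS}, this yields the promised equivalence between firm monoidal categories and tensor-restriction categories.
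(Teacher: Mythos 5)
Your construction coincides with the paper's: the same functor $F(f)=[s_f,T(f)]$ extracted from Proposition~\ref{prop:factorisation}, the same candidate inverse $[s,g]\mapsto g\circ(X\otimes d_s)\circ\rho^{-1}$, functoriality via $s_{g\circ f}=s_f\wedge s_g$ together with the uniqueness clause of (TR4), and the direction $\Psi\circ\Phi=\id$ exactly as you give it. The only point of divergence is the direction $\Phi\circ\Psi=\id$, and that is where your argument has a genuine gap.

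You reduce $\Phi(\Psi[s,g])=[s,g]$ to the right-cancellation claim that restriction-total $h,h'\colon X\otimes S\to Y$ with $h\circ(e_s\bullet(X\otimes S))=h'\circ(e_s\bullet(X\otimes S))$ must be equal, and you say only that you ``expect'' to prove this via (TR5); as it stands the step is missing, and the route you sketch does not close it. Rewriting $e_s\bullet(X\otimes S)$ through the isomorphism $X\otimes\mu_S$ (with $\mu_S=\rho\circ(S\otimes s)$) and cancelling that isomorphism merely returns you to cancelling a map of the form $Z\otimes d_s$ on the right, which is the same problem again; composing with the restriction inverse $u_s^\circ$ likewise reproduces the original equation. (TR5) is a lifting property of points $I\to S$ against subunits and gives no purchase on arbitrary restriction-total maps out of $X\otimes S$; and the uniqueness clause of (TR4) would deliver the cancellation only if $h\circ(e_s\bullet(X\otimes S))$ were known to be tensor-total, which it need not be. The fix is to avoid the cancellation altogether, as the paper does: having established functoriality, compute $\Phi(g\circ u_s)=\Phi(g)\circ\Phi(u_s)$ factor by factor, with $\Phi(g)=[1,g\circ\rho]$ for restriction-total $g$ and $\Phi(u_s)=[s,\,\cdot\,]$ computed directly. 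Equivalently, within your own setup one can exhibit the explicit restriction-total map $h_0=(g\otimes S)\circ(X\otimes\mu_S^{-1})$, verify $f\otimes S=h_0\circ\rest{f\otimes S}$ using the identity $d_s\circ s=e_s\bullet S$ (which follows from $s\circ d_s\circ s=e_s\circ s=s\circ(e_s\bullet S)$ and monicity of $s$), and then invoke the uniqueness in (TR4) --- legitimate here because $f\otimes S_f$ \emph{is} tensor-total by Proposition~\ref{prop:factorisation} --- to conclude $\total{f\otimes S}=h_0$ and hence $T(f)=\rho\circ(Y\otimes s)\circ h_0=g$. With that replacement the rest of your argument goes through.
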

\begin{proof}
  Define a functor $F \colon \cat{X} \to \S[\T[\cat{X}]]$ by $F(f) = [s_f, T(f)]$, which is well-defined by Proposition~\ref{prop:factorisation}. 
  Because $\total{\id[X]}=\id[X]$ and the least subunit $s_{\id[X]}$ which $\id[I]$ factors through is $\id[I]$, we have $T(\id[X])= \rho$ and $F$ indeed preserves identities.
  To see that $F$ preserves composition, let $f \colon X \to Y$ and $g \colon Y \to Z$ in $\cat{X}$.
  Then the following diagram commutes by Lemmas~\ref{lem:OtoISub} and~\ref{lem:ISubtoO}:
  \[\begin{pic}[xscale=4.5,yscale=1.5,font=\tiny]
    \node (Tl) at (0,2.7) {$X$};
    \node (tl) at (0,2) {$X \otimes I \otimes I$};
    \node (bl) at (0,1) {$X \otimes S_f \otimes I$};
    \node (Bl2) at (0,.1) {$X \otimes S_f \otimes S_g$};
    \node (Bl) at (0,-.5) {$X \otimes S_{gf}$};
    \node (T) at (1,2.7) {$Y$};
    \node (t) at (1,2) {$Y \otimes I \otimes I$};
    \node (b1) at (.7,1) {$Y \otimes S_f \otimes I$};
    \node (b2) at (1.3,1) {$Y \otimes I \otimes S_g$};
    \node (B) at (1,-.5) {$Y \otimes S_{gf}$};
    \node (B2) at (1,.1) {$Y \otimes S_f \otimes S_g$};
    \node (Tr) at (2,2.7) {$Z$};
    \node (tr) at (2,2) {$Z \otimes I \otimes I$};
    \node (br) at (2,1) {$Z \otimes I \otimes S_g$};
    \node (Br2) at (2,.1) {$Z \otimes S_f \otimes S_g$};
    \node (Br) at (2,-.5) {$Z \otimes S_{gf}$};
    \draw[->] (Tl) to node[above]{$f$} (T);
    \draw[->] (T) to node[above]{$g$} (Tr);
    \draw[->] (tl) to node[above]{$f \otimes I$} (t);
    \draw[->] (t) to node[above]{$g \otimes I$} (tr);
    \draw[->] (bl) to node[above]{$f \otimes S_f$} (b1);
    \draw[->] (b2) to node[above]{$g \otimes S_g$} (br);
    \draw[->] (Bl) to node[above]{$f \otimes S_{gf}$} (B);
    \draw[->] (B) to node[above]{$g \otimes S_{gf}$} (Br);
    \draw[->] (Tl) to node[right]{$\lambda^{-1}$} (tl);
    \draw[->] (T) to node[right]{$\lambda^{-1}$} (t);
    \draw[->] (Tr) to node[left]{$\lambda^{-1}$} (tr);
    \draw[->] (tl) to node[right]{$\id \otimes d_f$} (bl);
    \draw[>->] (br) to node[left]{$\id \otimes s_g$} (tr);
    \draw[>->] (b1) to node[left]{$\id \otimes s_f \otimes s_f$} (t);
    \draw[->] (t) to node[right]{$\id \otimes d_g$} (b2);
    \draw[->] (bl) to node[right]{$\id \otimes d_g$} (Bl2);
    \draw[->] (b1) to node[left]{$\id \otimes d_g$} (B2);
    \draw[>->] (B2) to node[right]{$\id \otimes s_f \otimes \id$} (b2);
    \draw[>->] (Br2) to node[left]{$\id \otimes s_f \otimes \id$} (br);
    \draw[-,double] (Bl2) to (Bl);
    \draw[-,double] (B2) to (B);
    \draw[-,double] (Br2) to (Br);
    \draw[->] (tl) to[out=-130,in=130,looseness=.6] node[right]{$\id \otimes d_{gf}$} (Bl);
    \draw[>->] (Br) to[out=60,in=-60,looseness=.6] node[left]{$\id \otimes s_{gf}$} (tr);
    \draw[->] (Bl) to[out=-60,in=-130,looseness=.8] node[below]{$(g \circ f) \otimes S_{gf}$} (Br);
  \end{pic}\]
  Hence $s_{g \circ f} = s_g \wedge s_f$. It follows that $\rest{gf} = e_{gf} \bullet X = e_g \bullet e_f \bullet X = e_g \bullet \rest{f}$. Therefore the uniqueness of (TR4) guarantees that the following diagram commutes:
  \[\begin{pic}[xscale=5,yscale=1.5]
    \node (tl) at (0,2) {$X \otimes S_{gf}$};
    \node (t) at (1,2) {$Z \otimes S_{gf}$};
    \node (tr) at (2,2) {$Z$};
    \node (bl) at (0,1) {$X \otimes S_f \otimes S_g$};
    \node (b) at (1,1) {$Z \otimes S_f \otimes S_g$};
    \node (br) at (2,1) {$Z \otimes S_g$};
    \node (l) at (.5,0) {$Y \otimes S_f \otimes S_g$};
    \node (r) at (1.5,0) {$Y \otimes S_g$};
    \draw[->] (tl) to node[above]{$\total{(g \circ f) \otimes \id}$} (t);
    \draw[>->] (t) to node[above]{$\id \otimes s_{gf}$} (tr);
    \draw[-,double] (tl) to (bl);
    \draw[-,double] (t) to (b);
    \draw[>->] (br) to node[right]{$\id \otimes s_g$} (tr);
    \draw[->] (bl) to node[left]{$\total{f \otimes \id} \otimes \id$} (l);
    \draw[->] (l) to node[right]{$\sigma \circ (\total{g \otimes \id} \otimes \id) \circ \sigma$} (b);
    \draw[>->] (b) to node[above]{$\id \otimes s_f \otimes \id$} (br);
    \draw[>->] (l) to node[below]{$\id \otimes s_f \otimes \id$} (r);
    \draw[->] (r) to node[right]{$\total{g \otimes \id}$} (br);
  \end{pic}\]
  Thus $F(g \circ f) = [s_{g \circ f},T(g \circ f)] = [s_g \wedge s_f, T(g) \circ (T(f) \otimes \id)] = F(g) \circ F(f)$.

  In the other direction, define $G \colon \S[\T[\cat{X}]] \to \cat{X}$ by $G\big[s,\;\begin{minipic}
        \node[morphism] (f) at (0,0) {$f$};
        \draw ([xshift=-1mm]f.south west) to +(0,-.125) node[right=-1mm,font=\tiny]{$A$};
        \draw ([xshift=1mm]f.south east) to +(0,-.125) node[right=-1mm,font=\tiny]{$S$};
        \draw (f.north) to +(0,.125) node[right=-1mm,font=\tiny]{$B$};
      \end{minipic}\!\big]=f \circ (\id \otimes d_f)$. This is a well-defined functor by Lemmas~\ref{lem:OtoISub} and~\ref{lem:ISubtoO}. Clearly on objects we have that $F(G(X))= X$ and $G(F(X))$. For morphisms, on the one hand we have $G(F(f)) = T(f) \circ (\id \otimes d_f) = f$ by construction. On the other hand,
  for restriction-total $f \colon X \otimes S \to Y$ we have $s_{\id \otimes d_f} = \id$, and so:
  \begin{align*}
    F(G\big[s,\;\begin{minipic}
        \node[morphism] (f) at (0,0) {$f$};
        \draw ([xshift=-1mm]f.south west) to +(0,-.125) node[right=-1mm,font=\tiny]{$A$};
        \draw ([xshift=1mm]f.south east) to +(0,-.125) node[right=-1mm,font=\tiny]{$S$};
        \draw (f.north) to +(0,.125) node[right=-1mm,font=\tiny]{$B$};
      \end{minipic}\!\big]) 
    & = F(f \circ (\id \otimes d_f)) \\
    & = F(f) \circ F(X \otimes d_f) \\
    & = [s_f, T(f)] \circ [\id[I], d_f] \\
    & = [s, T(f) \circ (\id \otimes d_f)] \\
    & = [s, T(f) \circ (\rest{f} \otimes d_f)] \\
    & = \big[s,\;\begin{minipic}
        \node[morphism] (f) at (0,0) {$f$};
        \draw ([xshift=-1mm]f.south west) to +(0,-.125) node[right=-1mm,font=\tiny]{$A$};
        \draw ([xshift=1mm]f.south east) to +(0,-.125) node[right=-1mm,font=\tiny]{$S$};
        \draw (f.north) to +(0,.125) node[right=-1mm,font=\tiny]{$B$};
      \end{minipic}\!\big]
  \end{align*}
  Thus $F$ and $G$ are inverses.

  It is clear that $F$ and $G$ are strict monoidal functors.
  It follows directly that $F$ and $G$ preserve subunits, and indeed $G\big[\id[I],\,\begin{minipic}
      \draw (.3,-.2) node[dot]{} to (.3,-.4) node[right=-1mm,font=\tiny]{$S$};
    \end{minipic}\!\big]=s$, and $F(s) =\big[\id[I],\,\begin{minipic}
      \draw (.3,-.2) node[dot]{} to (.3,-.4) node[right=-1mm,font=\tiny]{$S$};
    \end{minipic}\!\big]$.

  Finally, $T(\rest{f}) = T(e_f \bullet \id) = T(e_f) \bullet \id = \id \otimes s_f$, therefore: 
  \[F(\rest{f}) = [s_f, T(\rest{f})] = [s_f, \id \otimes s_f] = \rest{[s_f, T(f)]} = \rest{F(f)}\]
  so $F$ preserves restriction. Similarly, 
  \begin{align*}
    G(\rest{\big[s,\;\begin{minipic}
        \node[morphism] (f) at (0,0) {$f$};
        \draw ([xshift=-1mm]f.south west) to +(0,-.125) node[right=-1mm,font=\tiny]{$A$};
        \draw ([xshift=1mm]f.south east) to +(0,-.125) node[right=-1mm,font=\tiny]{$S$};
        \draw (f.north) to +(0,.125) node[right=-1mm,font=\tiny]{$B$};
      \end{minipic}\!\big]}) 
    & = G \big[s,\,\begin{minipic}
      \draw (0,0) to (0,-.5) node[right=-1mm,font=\tiny]{$A$};
      \draw (.3,-.2) node[dot]{} to (.3,-.5) node[right=-1mm,font=\tiny]{$S$};
    \end{minipic}\!\big] = (\id \otimes s) \circ (\id \otimes d_s) = \id \otimes e_f 
    \\
    & = \id \otimes \rest{d_f} = \rest{f \circ (\id \otimes d_f)} = \rest{G\Big[s,\;\begin{minipic}
        \node[morphism] (f) at (0,0) {$f$};
        \draw ([xshift=-1mm]f.south west) to +(0,-.125) node[right=-1mm,font=\tiny]{$A$};
        \draw ([xshift=1mm]f.south east) to +(0,-.125) node[right=-1mm,font=\tiny]{$S$};
        \draw (f.north) to +(0,.125) node[right=-1mm,font=\tiny]{$B$};
      \end{minipic}\!\big]}
  \end{align*}
  so $G$ preserves restriction too. So we conclude that it is an isomorphism of tensor-restriction categories.
\end{proof}

This main result easily lifts to functors. Recall that a \emph{morphism} of firm monoidal categories~\cite[10.1]{enriquemolinerheunentull:tensortopology} is a (strong) monoidal functor that sends subunits to subunits. They form a category $\cat{FirmCat}$.

\begin{definition}
  A \emph{morphism of tensor-restriction categories} is a functor $F \colon \cat{X} \to \cat{Y}$ that is (strong) monoidal, sends subunits to subunits, and satisfies $F(\rest{f})=\rest{F(f)}$ (i.e. $F$ is a restriction functor \cite[Section 2.2.1]{cockettlack:restrictioncategories}).
  Tensor-restriction categories and their morphisms form a category $\cat{TensRestCat}$.
\end{definition}

\begin{theorem}
  There is an equivalence of categories $\cat{FirmCat} \simeq \cat{TensRestCat}$.
\end{theorem}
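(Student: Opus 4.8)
The plan is to upgrade the object assignments $\cat{C} \mapsto \S[\cat{C}]$ and $\cat{X} \mapsto \T[\cat{X}]$ to functors $\S \colon \cat{FirmCat} \to \cat{TensRestCat}$ and $\T \colon \cat{TensRestCat} \to \cat{FirmCat}$, and then to invoke the object-level isomorphisms already in hand: $\cat{C} \cong \T[\S[\cat{C}]]$ (Proposition~\ref{prop:totalinS} and the corollary following it, as firm monoidal categories) and $\cat{X} \simeq \S[\T[\cat{X}]]$ (Theorem~\ref{thm:main}, as tensor-restriction categories). Once $\S$ and $\T$ are functors and these families are shown to be natural, each component of the unit and counit is an isomorphism, so the two natural isomorphisms exhibit $\S$ and $\T$ as an equivalence (in fact $\S \dashv \T$ is an adjoint equivalence and both composites are naturally isomorphic to the identity).

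First I would make $\S$ a functor. For a morphism $F \colon \cat{C} \to \cat{D}$ of firm monoidal categories, with (strong) monoidal structure isomorphism $\varphi_{A,S} \colon F(A) \otimes F(S) \to F(A \otimes S)$, define $\S[F]$ to be $F$ on objects and to send $\big[s,\,f\colon A\otimes S \to B\big]$ to $\big[F(s),\, F(f) \circ \varphi_{A,S}\big]$; since $F$ sends subunits to subunits, $F(s)$ represents a subunit of $\cat{D}$. Naturality of $\varphi$ shows this respects the defining equivalence relation on pairs (with $m$ replaced by $F(m)$), and coherence of $\varphi$ together with the formula for composition in $\S[-]$ shows $\S[F]$ preserves identities and composition; it is strong monoidal, sends subunits to subunits, and satisfies $\S[F](\rest g) = \rest{\S[F](g)}$, so it is a morphism of tensor-restriction categories. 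Functoriality of $\T$ is easier: a morphism $G \colon \cat{X} \to \cat{Y}$ is in particular a restriction functor, so $\rest f = \id$ forces $\rest{G(f)} = G(\rest f) = \id$, hence $G$ restricts to a strong monoidal, subunit-preserving functor $\T[G] \colon \T[\cat{X}] \to \T[\cat{Y}]$.

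Naturality of $\eta_{\cat{C}} \colon \cat{C} \cong \T[\S[\cat{C}]]$, $f \mapsto [1,f]$, is immediate from the definition of $\S[F]$ on total morphisms $[1,f]$. The substantive point is naturality of $\epsilon_{\cat{X}} \colon \cat{X} \cong \S[\T[\cat{X}]]$, $f \mapsto [s_f, T(f)]$: one must show a morphism $G$ of tensor-restriction categories commutes with all the derived data used in Theorem~\ref{thm:main}, namely that $G$ sends tensor-restriction points to tensor-restriction points, carries the least subunit $s_f$ to $s_{G(f)}$, the mediating point $d_f$ to $d_{G(f)}$, and the total factor $T(f)$ to $T(G(f))$. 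Preservation of restriction inverses is automatic for restriction functors; preservation of tensor-totality and of ``least subunit through which a restriction idempotent scalar factors'' follows from $G$ being strong monoidal and subunit-preserving together with the uniqueness clauses of (TR1) and (TR4) and the fact that $G$ preserves the semilattice isomorphism $\cO(I) \simeq \ISub$ of Proposition~\ref{prop:OisISub}. Granting this, $\epsilon_{\cat{Y}} \circ G = \S[\T[G]] \circ \epsilon_{\cat{X}}$ holds on the nose.

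The main obstacle is precisely this last naturality check: unlike the constructions themselves, the functor $\epsilon_{\cat{X}}$ of Theorem~\ref{thm:main} is assembled from data ($s_f$, $d_f$, $\total{\cdot}$) characterised only by universal properties, so one has to argue that every structure-preserving $G$ respects them; the clean route is to first establish that $G$ preserves tensor-restriction points and tensor-total morphisms, whence preservation of $s_f$, $d_f$, $T(f)$ follows by uniqueness. An alternative that avoids treating $\T$ as a functor at all is to prove directly that $\S$ is essentially surjective (Theorem~\ref{thm:main}), faithful (it is injective on total morphisms, and the total morphisms of $\S[\cat{C}]$ are exactly $\cat{C}$ by Proposition~\ref{prop:totalinS}), and full (any morphism $H \colon \S[\cat{C}] \to \S[\cat{D}]$ is determined on the orthogonal factorisation of Proposition~\ref{prop:factorisation} by its action on restriction idempotents, which is forced by Proposition~\ref{prop:restrictionidempotentsinS} and subunit-preservation, and on total morphisms, which is $\S$ of its restriction $\cat{C} \cong \T[\S[\cat{C}]] \to \T[\S[\cat{D}]] \cong \cat{D}$); this is the form in which I would write the final proof.
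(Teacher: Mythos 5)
Your proposal is correct and takes essentially the same route as the paper: the paper's proof merely defines $\S[F]$ on morphisms by $[s,f]\mapsto[F(s),F(f)]$ and $\T[G]$ by restriction to total maps, then appeals to the already-established isomorphisms $\cat{C}\cong\T[\S[\cat{C}]]$ and $\cat{X}\simeq\S[\T[\cat{X}]]$. You are in fact rather more careful than the paper, which omits both the strong monoidal structure map $\varphi$ in the definition of $\S[F]$ and the naturality check for the counit that you rightly identify as the substantive point.
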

\begin{proof}
  All that remains to be verified is that $\S$ and $\T$ are functors. But this is easy: define $\S[F]$ by $A \mapsto F(A)$ on objects and by $[s,f] \mapsto [F(s), F(f)]$ on morphisms, and define $\T[G]$ by $A \mapsto G(X)$ on objects and as $G(f)=f$ on morphisms.
\end{proof}

This lets us characterise the fixed points of the $\S$-construction.
Recall that a firm category is \emph{simple} when it has no nontrivial subunits~\cite[Definition~5.3]{enriquemolinerheunentull:tensortopology}.

\begin{corollary}\label{cor:simple}
  A firm monoidal category $\cat{C}$ has $\S[\cat{C}] \simeq \cat{C}$ if and only if it is simple.
\end{corollary}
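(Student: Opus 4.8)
The plan is to prove the two implications separately: the direction ``$\cat{C}$ simple $\Rightarrow \S[\cat{C}]\simeq\cat{C}$'' follows directly from the analysis of $\S[\cat{C}]$ already carried out, while the converse will be deduced from the equivalence $\cat{FirmCat}\simeq\cat{TensRestCat}$ together with Proposition~\ref{prop:totalinS}.

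First I would do the ``if'' direction. Assume $\cat{C}$ is simple, so $\ISub(\cat{C})=\{1\}$. By Proposition~\ref{prop:restrictionidempotentsinS}, for every object $A$ the semilattice $\cO(A)$ of restriction idempotents in $\S[\cat{C}]$ is isomorphic to $\ISub(\cat{C})$, hence trivial; thus $\rest{[s,f]}=\id$ for every morphism $[s,f]$ of $\S[\cat{C}]$, i.e. every morphism is restriction-total. Therefore $\S[\cat{C}]=\T[\S[\cat{C}]]$, which is isomorphic to $\cat{C}$ by Proposition~\ref{prop:totalinS}. Concretely, since $1$ is the only subunit, every pair $(s,f)$ is equivalent to one of the form $(1,g)$, and the canonical strict monoidal functor $\cat{C}\to\S[\cat{C}]$, $g\mapsto[1,g\circ\rho]$, is then bijective on objects, full, and faithful.

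For the ``only if'' direction, suppose $\S[\cat{C}]\simeq\cat{C}$ as firm monoidal categories. Since $\cat{C}\cong\T[\S[\cat{C}]]$ by Proposition~\ref{prop:totalinS}, this yields an equivalence, in $\cat{FirmCat}$, between $\S[\cat{C}]$ and its own wide monoidal subcategory of restriction-total morphisms. I would transport the tensor-restriction structure of $\S[\cat{C}]$ across this equivalence, turning $\T[\S[\cat{C}]]$ — hence, again via Proposition~\ref{prop:totalinS}, $\cat{C}$ — into a tensor-restriction category for which the equivalence is a restriction equivalence. Theorem~\ref{thm:main} then gives $\cat{C}\simeq\S[\T[\cat{C}]]$, and since $\S$ is half of the equivalence $\cat{FirmCat}\simeq\cat{TensRestCat}$ this transports back to an equivalence $\cat{C}\simeq\T[\cat{C}]$ for the new structure. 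It remains to conclude that every morphism of $\cat{C}$, equivalently of $\S[\cat{C}]$, is restriction-total: once this is known, $\cO(I)=\{\id\}$, so by Proposition~\ref{prop:restrictionidempotentsinS} $\ISub(\cat{C})\cong\cO(I)=\{1\}$ and $\cat{C}$ is simple.

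The hard part will be precisely this last step — upgrading the abstract equivalence $\cat{C}\simeq\T[\cat{C}]$ to the statement that the inclusion $\T[\cat{C}]\hookrightarrow\cat{C}$ is an equivalence, so that its fullness forces every morphism to be total. I expect the clean route is to note that the largest subunit to which a morphism tensor-restricts, and therefore its restriction, is determined by the monoidal structure alone (so that a firm-monoidal equivalence onto $\T[\cat{C}]$ is automatically a restriction functor into a category carrying the trivial restriction), and that a faithful restriction functor sending every idempotent $\rest{f}$ to an identity forces $\rest{f}=\id$. Should one wish to avoid this, it suffices to read ``$\S[\cat{C}]\simeq\cat{C}$'' via the canonical comparison $\cat{C}\to\S[\cat{C}]$ of the second paragraph, which by Proposition~\ref{prop:totalinS} is fully faithful with essential image $\T[\S[\cat{C}]]$, hence an equivalence if and only if it is full, if and only if $\ISub(\cat{C})=\{1\}$.
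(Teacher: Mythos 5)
Your ``if'' direction is exactly the paper's: simplicity forces every morphism of $\S[\cat{C}]$ to be restriction-total, so $\S[\cat{C}]=\T[\S[\cat{C}]]\cong\cat{C}$ by Proposition~\ref{prop:totalinS}. For the converse, the paper's entire proof is the one line ``if $\S[\cat{C}]\simeq\cat{C}\simeq\T[\S[\cat{C}]]$, then every map in $\S[\cat{C}]$ is total, so $\ISub(\cat{C})=\{1\}$'' --- which is precisely your closing fallback: the canonical comparison $\cat{C}\to\S[\cat{C}]$, $g\mapsto[1,g\circ\rho]$, is bijective on objects and faithful with image $\T[\S[\cat{C}]]$ (not ``fully faithful'' into $\S[\cat{C}]$, or it would already be an equivalence), so it is an equivalence iff it is full, iff there is no morphism $[s,f]$ with $s$ a nontrivial subunit (e.g.\ $[s,s\circ\lambda]\colon I\to I$), iff $\cat{C}$ is simple. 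You should promote that fallback to the main argument and discard the transport-of-structure route, which is both unnecessary and genuinely incomplete: the ``clean route'' you sketch for closing its gap --- that the restriction of $[s,f]$ is recoverable from the monoidal structure of $\S[\cat{C}]$ alone, say as the least subunit to which $[s,f]$ tensor-restricts --- is established nowhere in the paper and is not obviously true ($[s,f]$ tensor-restricts to $s$, but nothing prevents $f$ from factoring further), and the general principle you would otherwise need, that a category equivalent to a proper wide subcategory cannot exist, is false (the one-object category on the monoid $\mathbb{N}$ is isomorphic to its wide subcategory of even elements). Your instinct to read $\S[\cat{C}]\simeq\cat{C}$ via the canonical comparison is the right one, and is consistent with how the paper later applies the corollary: there the isomorphism $\cat{X}\cong\S[\T[\cat{X}]]$ of Theorem~\ref{thm:main} reduces to the canonical comparison when the restriction is trivial.
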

\begin{proof}
  If $\ISub(\cat{C})=\{1\}$, every map in $\S[\cat{C}]$ is total, and so $\S[\cat{C}]=\T[\S[\cat{C}]]\simeq\cat{C}$.
  Conversely, if $S[\cat{C}] \simeq \cat{C} \simeq \T[\S[\cat{C}]]$, then every map in $\S[\cat{C}]$ is total, and so $\ISub(\cat{C})=\{1\}$.
\end{proof}

A particular example of a simple firm monoidal category $\cat{C}$ is when $\cat{C}$ is a groupoid. Thus as a consequence of Corollary~\ref{cor:inversecat}, if $\cat{C}$ is a groupoid, or equivalently if $\S[\cat{C}]$ is an inverse category, then $\S[\cat{C}] \simeq \cat{C}$.  

%Note that the equivalent conditions of the previous corollary are implied by the equivalent conditions of Corollary~\ref{cor:inversecat}:
%\[
 % \S[\cat{C}] \text{ inverse category }
%  \iff
%  \cat{C} \text{ groupoid }
%  \implies
%  \cat{C} \text{ simple }
%  \iff
%  \S[\cat{C}] \simeq \cat{C}
%\]

We conclude this section with examples pointing out that not every firm monoidal category is a tensor-restriction category. 

\begin{example} 
  Every category $\cat{C}$ is trivially a restriction category by setting $\rest{f} = 1$, so every map is total and $\T[\cat{C}] = \cat{C}$. Thus every firm monoidal category is trivially a firm monoidal restriction category. Suppose a tensor-restriction category $\cat{X}$ had a trivial restriction, so $\T[\cat{X}] = \cat{X}$. Theorem~\ref{thm:main} then shows that $\cat{X} \cong \S[\T[\cat{X}]] = \S[\cat{X}]$. But Corollary~\ref{cor:simple} then implies that $\cat{X}$ is simple. Thus every firm monoidal category that is not simple (i.e.\ has a nontrivial subunit), regarded as a firm monoidal restriction category with the trivial restriction, is not a tensor-restriction category. 
\end{example}

\begin{example} 
  There are also many examples of firm monoidal restriction categories with nontrivial restrictions that are not tensor-restriction categories. Consider $\cat{Par}$ and recall that $\T[\cat{Par}] = \cat{Set}$. If it was a tensor-restriction category, Theorem~\ref{thm:main} would give $\cat{Par} \simeq \S[\T[\cat{Set}]]$, but this is not the case. Indeed, it is clear that $\cat{Par}(X,Y) \not\simeq \cat{Set}(X,Y) + 1 \sim \S[\cat{Set}](X,Y)$ as in Example~\ref{ex:sset}). Therefore $\cat{Par} \not\simeq \S[\T[\cat{Set}]]$. Thus $\cat{Par}$ is not a tensor-restriction category. 
\end{example}

\section{Alternative axiomatisations}\label{sec:alternatives}

The axiomatisation of tensor-restriction categories of Definition~\ref{def:tensorrestrictioncategory} has several features. First, of course, it works, in the sense that Theorem~\ref{thm:main} holds. Second, it is elementary, in the sense that it is phrased entirely in terms of basic notions from restriction category theory and tensor topology. Third, it is intuitive in that it conveys the structure of the $\S$-construction.
Nevertheless, there is room for alternative axiomatisations, which we discuss in this section. In this sense, this section is not necessarily essential to this article, but it does open the door to future investigation. We discuss two aspects:
\begin{itemize}
  \item It is not clear that axioms (TR1)--(TR7) are independent. In fact, there does appear to be some redundancy, which we will point out. We will argue that the ``property-based'' axiomatisation of Definition~\ref{def:tensorrestrictioncategory} may be replaced by a ``structure-based'' one where the tensor-restriction points $d$ are given. 
  \item The ``uniformity'' axiom (TR3) means that $\S[\cat{C}]$ is not just a restriction category, but its opposite is too. This may make for a more efficient axiomatisation. We will make a start towards such an alternative axiomatisation by defining \emph{bi-restriction categories} and observing that $\S[\cat{C}]$ is one.
\end{itemize}

Starting with the first goal, of taking the maps $d$ as structure rather than emergent properties, we first make precise the type of these maps.

\begin{definition} In a firm restriction category $\cat{X}$, a  \emph{restriction-subunit point} of subunit a $s \colon S \rightarrowtail I$  is a morphism $d \colon I \to S$ satisfying $\rest{d} = s \circ d$ .
\end{definition}
      % \[\begin{pic}[xscale=2]
      %   \node (l) at (0,0) {$I$};
      %   \node (r) at (2,0) {$I$};
      %   \node (t) at (1,1) {$S$};
      %   % \node (b) at (1,-1) {$I$};
      %   % \draw[->] (b) to node[right=2mm]{$\rest{d'}$} (r);
      %   % \draw[->] (b) to node[left=2mm]{$\rest{d'}$} (l);
      %   \draw[>->] (t) to node[above]{$s$} (r);
      %   % \draw[->] (b) to node[right=-.5mm,yshift=-7mm]{$d'$} (t);
      %   \draw[->,halo] (l) to node[below]{$\rest{d}$} (r);
      %   \draw[->] (l) to node[above]{$d$} (t);
      % \end{pic}\]
The next three lemmas show that these restriction-subunit points have some of the properties of axioms (TR1)--(TR7) automatically.

\begin{lemma}
In a firm restriction category, if $d\colon I \to S$ and $d'\colon I \to T$ are restriction-subunit points, then $d \wedge d' \colon: I \to S \otimes T$, defined as the composite $d \wedge d^\prime = (d \otimes d^\prime) \circ \lambda^{-1}$ is a restriction-subunit point of $S \otimes T$. 
\end{lemma}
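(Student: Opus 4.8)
The goal is to show that $d \wedge d' = (d \otimes d') \circ \lambda^{-1} \colon I \to S \otimes T$ satisfies $\overline{d \wedge d'} = (s \wedge t) \circ (d \wedge d')$, where $s \wedge t = \lambda \circ (s \otimes t)$. The plan is to compute both sides using the monoidal-restriction axiom $\overline{f \otimes g} = \overline{f} \otimes \overline{g}$, the fact that coherence isomorphisms are restriction-total, and the restriction axioms (R1)--(R4). First I would rewrite the left-hand side: $\overline{d \wedge d'} = \overline{(d \otimes d') \circ \lambda^{-1}}$. Since $\lambda^{-1}$ is an isomorphism, hence restriction-total, by \cite[Lemma 2.1]{cockettlack:restrictioncategories} this equals $\lambda \circ \overline{d \otimes d'} \circ \lambda^{-1}$; indeed one uses (R4) together with totality of $\lambda$ to slide the restriction across, exactly as in the proof of Lemma~\ref{lemma:scalarrestriction1}. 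Then $\overline{d \otimes d'} = \overline{d} \otimes \overline{d'}$ by the monoidal-restriction axiom, and we substitute $\overline{d} = s \circ d$ and $\overline{d'} = t \circ d'$ using the hypothesis that $d, d'$ are restriction-subunit points.

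Next I would simplify the right-hand side. We have $(s \wedge t) \circ (d \wedge d') = \lambda \circ (s \otimes t) \circ (d \otimes d') \circ \lambda^{-1} = \lambda \circ (s\circ d \otimes t \circ d') \circ \lambda^{-1}$ by functoriality of $\otimes$. So both sides reduce to $\lambda \circ ((s\circ d) \otimes (t \circ d')) \circ \lambda^{-1}$ — the left side after the rewrite above gives $\lambda \circ ((s\circ d)\otimes(t\circ d')) \circ \lambda^{-1}$, and the right side gives the same. Hence they are equal and $d \wedge d'$ is a restriction-subunit point of $S \otimes T$.

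The only mildly delicate step is justifying $\overline{(d \otimes d') \circ \lambda^{-1}} = \lambda \circ \overline{d \otimes d'} \circ \lambda^{-1}$, i.e. that restriction commutes with pre- and post-composition by the isomorphism $\lambda$. This is a standard consequence of the restriction axioms when the morphism in question is restriction-total (which $\lambda$ and $\lambda^{-1}$ are, being isomorphisms \cite[Lemma 2.2.i]{cockettlack:restrictioncategories}): concretely, for total $u$ one has $\overline{v \circ u} = \overline{\overline{v}\circ u}$ by \cite[Lemma 2.1]{cockettlack:restrictioncategories}, and for $u$ an isomorphism $\overline{u \circ w} = u \circ \overline{w} \circ u^{-1}$ by (R4) and totality; combining these yields the claim. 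I expect this bookkeeping to be the main (though routine) obstacle; everything else is direct substitution.

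\begin{proof}
  Since $\lambda$ and $\lambda^{-1}$ are isomorphisms, they are restriction-total \cite[Lemma 2.2.i]{cockettlack:restrictioncategories}. Using \cite[Lemma 2.1]{cockettlack:restrictioncategories} and (R4) as in the proof of Lemma~\ref{lemma:scalarrestriction1}, restriction commutes with conjugation by $\lambda$, so:
  \begin{align*}
    \overline{d \wedge d'}
    &= \overline{(d \otimes d') \circ \lambda^{-1}} \\
    &= \lambda \circ \overline{d \otimes d'} \circ \lambda^{-1} \\
    &= \lambda \circ (\overline{d} \otimes \overline{d'}) \circ \lambda^{-1} \\
    &= \lambda \circ \big( (s \circ d) \otimes (t \circ d') \big) \circ \lambda^{-1},
  \end{align*}
  where the third equality uses $\overline{f \otimes g} = \overline{f} \otimes \overline{g}$ and the last uses that $d$ and $d'$ are restriction-subunit points. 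On the other hand, with $s \wedge t = \lambda \circ (s \otimes t)$,
  \begin{align*}
    (s \wedge t) \circ (d \wedge d')
    &= \lambda \circ (s \otimes t) \circ (d \otimes d') \circ \lambda^{-1} \\
    &= \lambda \circ \big( (s \circ d) \otimes (t \circ d') \big) \circ \lambda^{-1}
  \end{align*}
  by functoriality of $\otimes$. Hence $\overline{d \wedge d'} = (s \wedge t) \circ (d \wedge d')$, so $d \wedge d'$ is a restriction-subunit point of $S \otimes T$.
\end{proof}
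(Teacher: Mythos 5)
Your proof is correct and follows essentially the same route as the paper's: compute $\rest{(d\otimes d')\circ\lambda^{-1}}$ by conjugating the restriction past the total isomorphism $\lambda^{-1}$, apply $\rest{f\otimes g}=\rest{f}\otimes\rest{g}$, substitute $\rest{d}=s\circ d$ and $\rest{d'}=t\circ d'$, and recognise the result as $(s\wedge t)\circ(d\wedge d')$. The only difference is that you spell out the justification for sliding the restriction across $\lambda$, which the paper leaves implicit.
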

\begin{proof}
The computation
\begin{align*}
 \rest{d \wedge d^\prime} &= \rest{(d \otimes d^\prime) \circ \lambda^{-1}} \\
 &= \lambda \circ (\rest{d} \otimes \rest{d^\prime}) \circ \lambda^{-1} \\
 &= \lambda \circ (s \otimes t) \circ (d \otimes d^\prime) \circ \lambda^{-1} \\
 &= (s \wedge t) \circ (d \wedge d^\prime)
\end{align*}
shows that $(d \wedge d')$ is a restriction-subunit point of $S \otimes T$. 
\end{proof}

\begin{lemma}
  In a firm restriction category, any restriction-subunit point $d \colon I \to S$ is a restriction isomorphism with restriction inverse $d^\circ = \rest{d} \circ s$.% , that is, $d \circ d^\circ = \rest{d^\circ}$ and $d^\circ \circ d = \rest{d}$.
\end{lemma}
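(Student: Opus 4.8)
The plan is to verify directly that the proposed map $d^\circ = \rest{d}\circ s$ satisfies the two defining equations of a restriction inverse, namely $d^\circ \circ d = \rest{d}$ and $d\circ d^\circ = \rest{d^\circ}$. The only ingredients I would use are the restriction axioms (R1) and (R4), the standard facts that a restriction idempotent is idempotent and fixed by $\rest{(-)}$ (so $\rest{d}\circ\rest{d}=\rest{d}$ and $\rest{\rest{d}}=\rest{d}$), the hypothesis that $s$ is a monomorphism (true of every subunit), and the defining equation $s\circ d = \rest{d}$ of a restriction-subunit point.

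For the first equation I would simply compute $d^\circ\circ d = \rest{d}\circ s\circ d = \rest{d}\circ\rest{d} = \rest{d}$, using $s\circ d=\rest{d}$ and then idempotency. For the second, I would first use (R1) in the form $d\circ\rest{d}=d$ to simplify $d\circ d^\circ = d\circ\rest{d}\circ s = d\circ s$, so the task reduces to proving $d\circ s = \rest{d^\circ}$, an equation between endomorphisms of $S$. Since $s$ is monic it suffices to show $s\circ(d\circ s) = s\circ\rest{d^\circ}$. The left-hand side equals $(s\circ d)\circ s = \rest{d}\circ s$, and the right-hand side is reached by applying (R4) with the monomorphism $s\colon S\to I$ and the scalar $\rest{d}\colon I\to I$, which gives $\rest{\rest{d}}\circ s = s\circ\rest{\rest{d}\circ s}$, i.e.\ $\rest{d}\circ s = s\circ\rest{d^\circ}$. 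Cancelling the monomorphism $s$ yields $d\circ s = \rest{d^\circ}$, hence $d\circ d^\circ=\rest{d^\circ}$.

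I do not anticipate a genuine obstacle; the argument is a short sequence of rewrites. The one point requiring a little care is the invocation of (R4): it must be applied with $\rest{d}$, rather than $d$ itself, playing the role of the morphism whose restriction appears, which is precisely why the identity $\rest{\rest{d}}=\rest{d}$ is needed to put the left-hand side into the form $\rest{\rest{d}}\circ s$. Firmness of the ambient category plays no role in this particular lemma beyond guaranteeing that $s$ is a subunit; the proof uses only that $s$ is monic.
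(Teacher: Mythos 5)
Your proof is correct. The first identity $d^\circ \circ d = \rest{d}$ is handled exactly as in the paper. For the second identity $d \circ d^\circ = \rest{d^\circ}$, however, you take a genuinely different route. The paper computes $\rest{d^\circ}$ head-on: it first derives $\rest{a \bullet f} = \rest{a} \bullet \rest{f}$ from the monoidal-restriction axiom $\rest{f \otimes g} = \rest{f} \otimes \rest{g}$, then uses the fact that the monomorphism $s$ is restriction-total to get $\rest{d^\circ} = \rest{\rest{d} \bullet s} = \rest{d} \bullet \id[S]$, and matches this against $d \circ d^\circ = d \circ s$. You instead reduce $d\circ s = \rest{d^\circ}$ to an equation after post-composition with the monic $s$, and obtain $s \circ \rest{d^\circ} = \rest{\rest{d}} \circ s = \rest{d} \circ s = s \circ d \circ s$ by a single application of (R4); cancelling $s$ finishes the argument. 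Your version is purely restriction-theoretic: it uses only (R1), (R4), idempotency of restriction idempotents, monicity of $s$, and the defining equation $s \circ d = \rest{d}$, and in particular never invokes the tensor product, so it proves a slightly more general statement (any monic $s$ with $s\circ d = \rest{d}$ would do, whether or not it is a subunit). The paper's version is marginally less careful about typing in its final chain of equalities (it silently identifies the scalar $\rest{d}$ with $\rest{d}\bullet\id[S]$), whereas your monomorphism-cancellation argument sidesteps that entirely; the only thing the paper's approach buys is reuse of the scalar-multiplication machinery that appears elsewhere in the section.
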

\begin{proof} 
  First note that in a monoidal restriction category,  $\rest{a \bullet f} = \rest{a} \bullet \rest{f}$ for any scalar $a\colon I \to I$ and any map $f \colon X \to Y$. Since $s$ is total,
  $
  \rest{d^\circ} = \rest{\rest{d} \bullet s} = \rest{\rest{d}} \bullet \rest{s} = \rest{d} \bullet I = \rest{d}
  $
  and therefore
\begin{align*}
    d \circ d^\circ = d \circ \rest{d} \circ s = d \circ s = \rest{d} = \rest{d^\circ}\text.
\end{align*}
  Similarly
  $
    d^\circ \circ d = \rest{d} \circ s \circ d = \rest{d} \circ \rest{d} = \rest{d}
  $, 
  so $d$ is a restriction isomorphism. 
\end{proof}

\begin{lemma} 
  Let $d \colon I \to S$ be a restriction-subunit point in a firm restriction category.
  Suppose that it satisfies the left-lifting property against subunits:
  if $t \circ f = g \circ d_s$ for any subunit $T$ and maps $f \colon I \to T$ and $g \colon S \to I$, then $f = m \circ d$ and $g = t \circ m$ for some $m$:
     \[ \begin{pic}[xscale=2,yscale=1.5]
        \node (tl) at (0,1) {$I$};
        \node (tr) at (1,1) {$S$};
        \node (bl) at (0,0) {$T$};
        \node (br) at (1,0) {$I$};
        \draw[->] (tl) to node[above]{$d$} (tr);
        \draw[->] (tr) to node[right]{$g$} (br);
        \draw[->] (tl) to node[left]{$f$} (bl);
        \draw[>->] (bl) to node[below]{$t$} (br);
        \draw[->,dashed] (tr) to node[right=2mm]{$m$} (bl);
       \end{pic}\]
  Then $d^\circ \colon S \to I$ is tensor-total, and $d \colon I \to S$ is a tensor-restriction point. 
\end{lemma}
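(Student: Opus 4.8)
The plan is to reduce both conclusions to the single claim that $d^\circ \colon S \to I$ is tensor-total. Indeed, the preceding lemma already shows that $d$ is a restriction isomorphism with restriction inverse $d^\circ$, so the moment $d^\circ$ is known to be tensor-total, $d$ is a tensor-restriction point by the very definition of that notion.

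To prove $d^\circ$ tensor-total, suppose $d^\circ$ tensor-restricts to a subunit $t \colon T \rightarrowtail I$; by Proposition~\ref{prop:identityrestrict} it suffices to show that $\rho \circ (\id[S] \otimes t) \colon S \otimes T \to S$ is an isomorphism. Unwinding Definition~\ref{def:restricttosubunit}, there is $h \colon S \to I \otimes T$ with $d^\circ = \rho_I \circ (\id[I] \otimes t) \circ h$; since $\rho_I = \lambda_I$ and $\lambda_I \circ (\id[I] \otimes t) = t \circ \lambda_T$ by naturality of $\lambda$, the morphism $f_0 = \lambda_T \circ h \colon S \to T$ satisfies $d^\circ = t \circ f_0$. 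Precomposing with $d$ and using both $d^\circ \circ d = \rest{d}$ (from the preceding lemma) and the defining equation $\rest{d} = s \circ d$ of a restriction-subunit point gives a commuting square with top edge $d \colon I \to S$, left edge $f_0 \circ d \colon I \to T$, right edge $s \colon S \to I$, and bottom edge $t$, since both composites around it equal $\rest{d}$. The hypothesised left-lifting property of $d$ against the subunit $t$ then yields $m \colon S \to T$ with $s = t \circ m$; that is, $s \leq t$ as subunits.

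It remains to upgrade $s \leq t$ to the invertibility of $\rho \circ (\id[S] \otimes t)$, and here I would use firmness. From $s = t \circ m$ we get $\id[S] \otimes s = (\id[S] \otimes t) \circ (\id[S] \otimes m)$; the left-hand side is invertible because $s$ is a subunit, so $\id[S] \otimes t$ is a split epimorphism. By naturality of the braiding $\id[S] \otimes t$ differs from $t \otimes \id[S] \colon T \otimes S \to I \otimes S$ only by isomorphisms, and $t \otimes \id[S]$ is monic by firmness applied to the subunits $t$ and $s$; hence $\id[S] \otimes t$ is a monic split epimorphism, i.e.\ an isomorphism. Composing with $\rho$ and invoking Proposition~\ref{prop:identityrestrict} shows $\id[S]$ tensor-restricts to $t$. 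Therefore $d^\circ$ is tensor-total, and by the first paragraph $d$ is a tensor-restriction point.

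The only step that is not routine is choosing the square to feed into the left-lifting property: the equation $\rest{d} = s \circ d$ is exactly what lets us rewrite $d^\circ \circ d$ as $s \circ d$ and thereby compare the two subunits $s$ and $t$ along $d$. Everything after that is standard bookkeeping with subunits in a firm category.
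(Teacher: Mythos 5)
Your proof is correct and follows essentially the same route as the paper's: precompose the factorisation $d^\circ = t\circ f_0$ with $d$, use $\rest{d}=s\circ d=d^\circ\circ d$ to build the square, apply the lifting property to get $s\le t$, and conclude $S\otimes T\simeq S$. The only difference is that you spell out two steps the paper leaves implicit (unwinding Definition~\ref{def:restricttosubunit} via $\lambda$-naturality, and deriving $S\otimes T\simeq S$ from $s\le t$ by the split-epi-plus-monic argument using firmness), both of which are correct.
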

\begin{proof}
  Suppose $d^\circ$ factors through a subunit $T$ as follows:
      \[\begin{pic}[xscale=2]
        \node (l) at (0,0) {$S$};
        \node (r) at (2,0) {$I$};
        \node (t) at (1,1) {$T$};
        % \node (b) at (1,-1) {$I$};
        % \draw[->] (b) to node[right=2mm]{$\rest{d'}$} (r);
        % \draw[->] (b) to node[left=2mm]{$\rest{d'}$} (l);
        \draw[->] (t) to node[above]{$t$} (r);
        % \draw[->] (b) to node[right=-.5mm,yshift=-7mm]{$d'$} (t);
        \draw[->,halo] (l) to node[below]{$d^\circ$} (r);
        \draw[->] (l) to node[above]{$f$} (t);
      \end{pic}\]
      We need to show that $S \otimes T \simeq S$. To do so, we will show that $s \leq t$. Observe:
    \[ s \circ d = \rest{d} = d^\circ \circ d = t \circ f \circ d \] 
      The left lifting property gives $m\colon S \to T$ with: 
      \[  \begin{pic}[xscale=1.5,yscale=1.25]
        \node (tl) at (0,2) {$I$};
        \node (tr) at (2,2) {$S$};
        \node (bl) at (0,0) {$T$};
        \node (br) at (2,0) {$I$};
        \node (ml) at (0,1) {$S$} ;
        \draw[->] (tl) to node[above]{$d$} (tr);
        \draw[>->] (tr) to node[right]{$s$} (br);
        \draw[->] (tl) to node[left]{$d$} (ml);
          \draw[->] (ml) to node[left]{$f$} (bl);
        \draw[>->] (bl) to node[below]{$t$} (br);
        \draw[->,dashed] (tr) to node[right=2mm]{$m$} (bl);
       \end{pic}
      \]
  In particular, $t \circ m = s$, and therefore $s \leq t$. We conclude that $S \otimes T \simeq S$, and so $d^\circ$ is tensor-total. Since $d^\circ$ is also a restriction isomorphism, it follows by definition that $d$ is a tensor-restriction point. 
\end{proof}

The previous three lemmas seem to exhibit some redundancy in the properties that (TR1)--(TR7) ask of the maps $d$. However, to progress in obtaining an equivalent axiomatisation, we need to demand a universal property of the restriction-subunit points $d$.
Say that a subunit $S$ has a \emph{maximal} restriction-subunit point if there is a restriction subunit point $d_s \colon I \to S$ that is initial amongst restriction-subunit points of $S$. More precisely, every restriction-subunit point $d \colon I \to S$ satisfies $d \leq d_s$ in the restriction category sense, that is $d = d_s \circ \rest{d}$.

\begin{lemma}
  In a firm restriction category, if subunits $S$ and $T$ have maximal restriction-subunit point $d_s \colon I \to S$ and $d_t\colon I \to T$, then $d_s \wedge d_t = (d_s \otimes d_t) \circ \lambda^{-1} \colon I \to S \otimes T$ is a maximal restriction-subunit point of $S \otimes T$.
\end{lemma}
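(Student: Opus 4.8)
The plan is to deduce the statement from the preceding lemma together with one structural observation: \emph{a restriction-subunit point of a subunit is completely determined by its restriction}. Indeed, by the earlier lemma the composite $d_s\wedge d_t=(d_s\otimes d_t)\circ\lambda^{-1}$ is already a restriction-subunit point of $S\otimes T$, so only maximality remains, i.e. that every restriction-subunit point $e\colon I\to S\otimes T$ of $s\wedge t$ satisfies $e=(d_s\wedge d_t)\circ\rest e$. Now if $e,e'$ are restriction-subunit points of $s\wedge t$ with $\rest e=\rest{e'}$, then $(s\wedge t)\circ e=\rest e=\rest{e'}=(s\wedge t)\circ e'$, and since $s\wedge t$ is monic we get $e=e'$. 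Hence it suffices, for each such $e$, to exhibit a restriction-subunit point of $s\wedge t$ of the form $(d_s\wedge d_t)\circ w$ whose restriction equals $\rest e$; I will show $w=\rest e$ works.

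First I would record that $\rest{d_s\wedge d_t}=(s\wedge t)\circ(d_s\wedge d_t)=\lambda\circ\big((s\circ d_s)\otimes(t\circ d_t)\big)\circ\lambda^{-1}=\lambda\circ(\rest{d_s}\otimes\rest{d_t})\circ\lambda^{-1}=\rest{d_s}\circ\rest{d_t}$, using that $d_s,d_t$ are restriction-subunit points and that scalar multiplication of scalars is composition. Assuming the inequality $\rest e\le\rest{d_s}\circ\rest{d_t}$ in $\cO(I)$, one gets on the one hand $\rest{(d_s\wedge d_t)\circ\rest e}=\rest{d_s\wedge d_t}\circ\rest e=\rest e$ by (R3), and on the other hand $(s\wedge t)\circ(d_s\wedge d_t)\circ\rest e=\rest{d_s\wedge d_t}\circ\rest e=\rest e$; so $(d_s\wedge d_t)\circ\rest e$ is a restriction-subunit point of $s\wedge t$ with restriction $\rest e$, and the rigidity above forces $e=(d_s\wedge d_t)\circ\rest e$.

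It then remains to prove $\rest e\le\rest{d_s}$ and, symmetrically, $\rest e\le\rest{d_t}$. For the first I would introduce the projection $p_S:=\rho\circ(S\otimes t)\colon S\otimes T\to S$, which is a monomorphism (hence restriction-total) because $S\otimes t$ is monic by firmness. Put $e_S:=p_S\circ e$. A short coherence computation (naturality of $\rho$, and $\rho_I=\lambda_I$ on the unit) gives $s\circ e_S=\lambda\circ(s\otimes t)\circ e=(s\wedge t)\circ e=\rest e$, while $\rest{e_S}=\rest{p_S\circ e}=\rest e$ since $p_S$ is total. Thus $\rest{e_S}=s\circ e_S$, i.e. $e_S$ is a restriction-subunit point of $S$, so maximality of $d_s$ gives $e_S=d_s\circ\rest{e_S}=d_s\circ\rest e$; taking restrictions and applying (R3) yields $\rest e=\rest{e_S}=\rest{d_s\circ\rest e}=\rest{d_s}\circ\rest e$, which is exactly $\rest e\le\rest{d_s}$. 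Running the same argument with $p_T:=\lambda\circ(s\otimes T)$ and $d_t$ gives $\rest e\le\rest{d_t}$, hence $\rest e\le\rest{d_s}\circ\rest{d_t}=\rest{d_s\wedge d_t}$, which completes the proof.

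I expect the only mildly delicate point to be the coherence identity $s\circ e_S=(s\wedge t)\circ e$ (and the parallel fact that $S\otimes t$ is monic, which is firmness plus a braiding), together with spotting that monicity of subunits makes restriction-subunit points rigid; once these are in hand the argument is entirely formal and needs no string-diagram manipulation. (One could instead reconstruct $e$ from the two projections directly via Lemma~\ref{lem:SsissS}, but that route is heavier and the above avoids it.)
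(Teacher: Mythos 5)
Your proof is correct, and it shares the paper's core skeleton: both arguments project the given restriction-subunit point $e$ of $S\otimes T$ onto its two components via $\rho\circ(S\otimes t)$ and $\lambda\circ(s\otimes T)$, check (using totality of these monos and the coherence identity $\rho_I=\lambda_I$) that the projections are restriction-subunit points of $S$ and $T$, and then invoke maximality of $d_s$ and $d_t$. Where you diverge is in how you close. The paper substitutes $d_s\circ\rest{e}$ and $d_t\circ\rest{e}$ back for the two projections and reassembles $e$ by a direct nine-step monoidal computation ending in $(\rest{e}\otimes e)\circ\lambda^{-1}=e\circ\rest{e}=e$, which leans on a Lemma~\ref{lem:SsissS}-style manipulation of the subunit's comultiplication. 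You instead extract only the scalar inequalities $\rest{e}\le\rest{d_s}$ and $\rest{e}\le\rest{d_t}$ from the projections, verify that $(d_s\wedge d_t)\circ\rest{e}$ is a restriction-subunit point with restriction $\rest{e}$, and conclude by your rigidity observation that restriction-subunit points of a fixed subunit are determined by their restriction, since $s\wedge t$ is monic. This sidesteps the reassembly computation entirely (the fiddliest part of the paper's proof, where the tensor factors must be matched up correctly) at the cost of the extra but easy verification in Step~(TR-style) that $\rest{d_s\wedge d_t}=\rest{d_s}\circ\rest{d_t}$; all the ingredients you use (monos are restriction-total, $\rest{g\circ\rest{f}}=\rest{g}\circ\rest{f}$, scalar multiplication of scalars is composition, $S\otimes t$ monic by firmness plus braiding) are available and used elsewhere in the paper, so the argument goes through.
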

\begin{proof}
  Suppose $d \colon I \to S \otimes T$ is a restriction subunit point of $S \otimes T$, that is, $\rest{d}= (s \wedge t) \circ d$. We need to show that $d \leq d_s \wedge d_t$. First observe %that for the composite $\lambda \circ (s \otimes T) \circ d \colon I \to T$,
\begin{align*}
    \rest{\lambda \circ (s \otimes T) \circ d} = \rest{d} = (s \wedge t) \circ d = \lambda \circ (s \otimes t) \circ d = t \circ \lambda \circ (s \otimes T) \circ d\text,
\end{align*}
where the first equality holds by totality of $\lambda \circ (s \otimes T)$. Therefore, $\lambda \circ (s \otimes T) \circ d$ is a restriction-subunit point of $T$ and so $\lambda \circ (s \otimes T) \circ d \leq d_t$:
\[ \lambda \circ (s \otimes T) \circ d = d_t \circ \rest{d} \]
Similarly $\rho \circ (S \otimes t) \circ d$ is a restriction-subunit point of $S$ and so $\rho \circ (S \otimes t) \circ d \leq d_s$:
\[ \rho \circ (S \otimes t) \circ d = d_s \circ \rest{d}  \]
Now we can show that $(d_s \wedge d_t) \circ \rest{d} =d$:
\begin{align*}
    (d_s \wedge d_t) \circ \rest{d} &=    (d_s \wedge d_t) \circ \rest{d} \circ \rest{d} \\
    &= (d_s \otimes d_t) \circ \lambda^{-1} \circ \rest{d} \circ \rest{d} \\
    &= (d_s \otimes d_t) \circ (\rest{d} \otimes \rest{d}) \circ \lambda^{-1} \\
    &= \left( \left(\lambda \circ (s \otimes T) \circ d \right) \otimes \left(\rho \circ (S \otimes t) \circ d \right) \right) \circ  \lambda^{-1} \\
    &= \left( \left(\lambda \circ (s \otimes t) \circ d \right) \otimes d \right) \circ \lambda^{-1} \\
    &= \left( \left( (s \wedge t) \circ d \right) \otimes d \right) \circ \lambda^{-1} \\
    &= \left( \rest{d} \otimes d \right) \circ \lambda^{-1} \\
    &= d \circ \rest{d} \\
    &= d
\end{align*}
Therefore $d \leq d_s \wedge d_t$. 
%So we conclude that $d_s \wedge d_t$ is a maximal restriction subunit point of $S \otimes T$. 
\end{proof}

As an intermezzo, we now discuss how to use the tensor-restriction points $\smash{\begin{minipic} \node[dot] (d) at (0,.1){}; \draw (d) to +(0,.2) node[right=-1mm]{$S$}; \end{minipic}}$in a graphical normal form of maps in tensor-restriction categories. This strongly resembles a monoidal version of~\cite{nester:cartesianrestrictiondiagrams} and is a first step towards a general calculus handling restriction graphically. The factorisation system of Proposition~\ref{prop:factorisation} lets us decompose any morphism $f$ in a tensor-restriction category as follows:
\[
  \begin{pic}
    \node[morphism,width=5mm] (f) at (0,0) {$f$};
    \draw (f.north) to +(0,.5) node[above]{$B$};
    \draw (f.south) to +(0,-.5) node[below]{$A$};
  \end{pic}
  \;\;=\;\;
  \begin{pic}
    \draw[dashed,gray] (-.6,-.3) to (3.5,-.3);
    \node[morphism,width=8mm] (f) at (0,0) {$T(f)$};
    \node[dot] (d) at ([yshift=-4mm]f.south east) {};
    \draw (d) to node[below right]{$S_f$} (f.south east);
    \draw (f.south west) to +(0,-.5) node[below]{$A$};
    \draw (f.north) to +(0,.5) node[above]{$B$};
    \node[gray,anchor=west] at (1,.2) {total part};
    \node[gray,anchor=west] at (1,-.7) {restriction part};
  \end{pic}
\]
In $\S[\cat{C}]$, this says
$\big[s,\,\begin{minipic} \node[morphism] (f) at (0,0) {$f$}; \draw(f.north) to +(0,.125); \draw (f.south west) to +(0,-.125); \draw (f.south east) to +(0,-.125); \end{minipic}\,\big]
=
\big[1,\,\begin{minipic} \node[morphism] (f) at (0,0) {$f$}; \draw(f.north) to +(0,.125); \draw (f.south west) to +(0,-.125); \draw (f.south east) to +(0,-.125); \end{minipic}\,\big]
\circ
\Big(
\big[1,\begin{minipic} \draw (0,.1) to (0,-.2) node[right=-1mm]{$A$}; \end{minipic}\!\big]
\otimes
\big[s,\begin{minipic} \draw (0,.1) to (0,-.2) node[right=-1mm]{$S$}; \end{minipic}\!\big]
\Big)
$.
Restriction and composition become:
\[
  \begin{pic}
    \node[morphism,width=5mm] (f) at (0,0) {$\rest{f}$};
    \draw (f.north) to +(0,.5) node[above]{$A$};
    \draw (f.south) to +(0,-.5) node[below]{$A$};
  \end{pic}
  = 
  \begin{pic}
    \draw[dashed,gray] (-.2,-.3) to (.5,-.3);
    \draw (0,-1) node[below]{$A$} to (0,.4) node[above]{$A$};
    \draw (.3,-.7) node[dot]{} to node[right]{$S_f$} (.3,.1) node[dot]{};
  \end{pic}
  \qquad\qquad
  \begin{pic}
    \node[morphism,width=5mm] (f) at (0,-.4) {$f$};
    \node[morphism,width=5mm] (g) at (0,.4) {$g$};
    \draw (g.north) to +(0,.4) node[above]{$C$};
    \draw (f.north) to node[right]{$B$} (g.south);
    \draw (f.south) to +(0,-.4) node[below]{$A$};
  \end{pic}
  \;=\;
  \begin{pic}
    \draw[dashed,gray] (-.8,-.5) to (1,-.5);
    \node[morphism,width=8mm] (f) at (0,0) {$T(g \circ f)$};
    \node[dot] (d) at ([yshift=-6mm]f.south east) {};
    \draw (d) node[below]{$S_g$} to (f.south east);
    \node[dot] (d2) at ([yshift=-6mm]f.south) {};
    \draw (d2) node[below]{$S_f$} to (f.south);
    \draw (f.south west) to +(0,-1.1) node[below]{$A$};
    \draw (f.north) to +(0,.5) node[above]{$C$};
  \end{pic}
\]

Finally, we change to the second goal of this section, by observing that the $\S[-]$-construction also induces a corestriction category~\cite{cockett:range, cockettlack:restrictioncategories}.

\begin{definition} \cite[Example 2.1.3.12]{cockettlack:restrictioncategories} A \emph{corestriction category} is a category $\cat{X}$ equipped with a choice of endomorphism $\range{f} \colon B \to B$ for each morphism $f \colon A \to B$ satisfying:
\begin{align}  
   \range{f} \circ f & = f \tag{CR1} \\
  \range{f} \circ \range{g} & = \range{g} \circ \range{f} &&\text{if $\cod(f)=\cod(g)$} \tag{CR2} \\
  \range{\range{g} \circ f} & = \range{g} \circ \range{f} &&\text{if $\cod(f)=\cod(g)$} \tag{CR3} \\
 g \circ \range{f} & = \range{g \circ f} \circ g &&\text{if $\dom(g)=\cod(f)$} \tag{CR4}
\end{align}
We call $\range{f}$ the \emph{corestriction} of $f$.
\end{definition}

\begin{example} Let $\cat{stabLat}$ be the category whose objects are semilattices and whose morphisms are stable homomorphisms, that is, morphisms between semilattices that preserve binary meets but not necessarily the top element. This is a corestriction category, where for a stable homomorphism $f \colon L \to L'$, the corestriction is the stable homomorphism $\range{f} \colon L' \to L'$ given by $\range{f}(x) = x \wedge f(1)$.  
\end{example}

The corestriction structure of $\S[\cat{C}]$ is defined similarly to its restriction structure.

\begin{proposition}\label{prop:SCcorestriction}
  If $\cat{C}$ is a firm monoidal category, $\S[\cat{C}]$ is a corestriction category with:
  \[
    \range{ \big[s,\;\begin{minipic}
        \node[morphism] (f) at (0,0) {$f$};
        \draw ([xshift=-1mm]f.south west) to +(0,-.125) node[right=-1mm,font=\tiny]{$A$};
        \draw ([xshift=1mm]f.south east) to +(0,-.125) node[right=-1mm,font=\tiny]{$S$};
        \draw (f.north) to +(0,.125) node[right=-1mm,font=\tiny]{$B$};
      \end{minipic}\!\big]} = 
      \big[s,\,\begin{minipic}
      \draw (0,0) to (0,-.5) node[right=-1mm,font=\tiny]{$B$};
      \draw (.3,-.2) node[dot]{} to (.3,-.5) node[right=-1mm,font=\tiny]{$S$};
    \end{minipic}\!\big]
  \]
\end{proposition}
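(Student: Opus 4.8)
The plan is to verify the four corestriction axioms (CR1)--(CR4) by the same style of graphical computation that established Proposition~\ref{prop:SCrestriction}, only with the capping dot moved from the domain wire to the codomain wire. Concretely, for $[s,f]\colon A\to B$ the claimed corestriction is $\range{[s,f]} = [s,\;\rho_B\circ(B\otimes s)]\colon B\to B$, the morphism $B\otimes S\to B$ that carries $B$ through untouched and applies the subunit $s\colon S\to I$ to the remaining factor. One first observes, exactly as in Proposition~\ref{prop:restrictionidempotentsinS}, that these are precisely the corestriction idempotents of $\S[\cat{C}]$ and that $\range{[s,f]}$ depends only on the subunit $s$; this already makes (CR2) and (CR3) essentially formal.

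For (CR1): composing $\range{[s,f]}$ with $[s,f]$ yields the pair over $s\wedge s$ whose underlying morphism is $f$ with an extra $S$-wire capped by $s$, and using $S\otimes S\cong S$ (valid for any subunit, as noted right after the definition of subunit) this pair represents $[s,f]$ again, mirroring the verification of (R1). For (CR2), $\range{[s,f]}\circ\range{[t,g]}$ caps the codomain wire with a dot on $S$ and a dot on $T$, which is symmetric in $(s,t)$ up to the isomorphism $S\otimes T\cong T\otimes S$; for (CR3), nesting one corestriction inside another adds nothing new since $\range{[t,g]}\circ[s,f]$ caps $B$ with $t$, and taking $\range{}$ of the result simply records the meet $s\wedge t$. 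Throughout one uses only that $\wedge$ is commutative and idempotent on $\ISub(\cat{C})$, so that subunit labels such as $t\wedge s\wedge t$ collapse to $s\wedge t$, together with firmness to identify the relevant carrier objects via braiding.

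The substantive axiom is (CR4). For $[s,f]\colon A\to B$ and $[t,g]\colon B\to C$ one must show $[t,g]\circ\range{[s,f]} = \range{[t,g]\circ[s,f]}\circ[t,g]$. Unfolding the composition rule, the left-hand side is the pair over $s\wedge t$ whose morphism $B\otimes S\otimes T\to C$ caps $S$ and feeds $B$ and $T$ into $g$, while the right-hand side is the pair over $t\wedge s\wedge t$ whose morphism $B\otimes T\otimes S\otimes T\to C$ feeds $B$ and the first copy of $T$ into $g$ and caps the remaining $S$ and $T$. These two representing pairs differ only by an isomorphism of the carrier object of the subunit: reorder tensor factors with the braiding, then merge the two $T$-wires. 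The merging step is exactly Lemma~\ref{lem:SsissS} together with the identities displayed just after it (which say that capping one of two parallel $T$-wires with $t$ and keeping the other is the same as the canonical isomorphism $T\otimes T\cong T$, and that capping after merging equals capping both). After this reindexing the two morphisms coincide on the nose, and checking that the same isomorphism intertwines the two subunit representatives $t\wedge s\wedge t$ and $s\wedge t$ finishes (CR4).

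The expected main obstacle is precisely this bookkeeping inside (CR4): tracking which copy of $T$ is consumed by $g$ and which is discarded, and verifying that the carrier isomorphism reindexing $t\wedge s\wedge t$ to $s\wedge t$ is simultaneously compatible with the braiding coherence and with Lemma~\ref{lem:SsissS}. This is the direct analogue of the most delicate case, (R4), in the proof of Proposition~\ref{prop:SCrestriction}, so no genuinely new idea is required --- it is a careful but routine diagram chase.
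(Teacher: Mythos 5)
Your proposal is correct and follows exactly the route the paper intends: the paper's own proof of Proposition~\ref{prop:SCcorestriction} consists of the single line ``Completely analogous to Proposition~\ref{prop:SCrestriction}'', and your verification of (CR1)--(CR4) is precisely that analogy carried out, with the dot moved to the codomain wire and the delicate case (CR4) handled by the same $T \otimes T \cong T$ merging via Lemma~\ref{lem:SsissS} that the paper uses for (R4).
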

\begin{proof} 
Completely analogous to Proposition \ref{prop:SCrestriction}. 
\end{proof}

It turns out that the corestriction structure of the previous proposition also makes $\S[\cat{C}]$ into a range category.

\begin{definition}\cite[Definition 2.12]{cockettlack:restrictioncategories}. A \emph{range category} is a restriction category $\cat{X}$ which is additionally equipped with a choice of endomorphism $\range{f} \colon B \to B$ for each morphism $f \colon A \to B$ satisfying: 
\begin{align}  
  \rest{\range{f}} & = \range{f} \tag{RR1} \\
  \range{f} \circ f & = f \tag{RR2} \\
 \range{\rest{g} \circ f}& = \rest{g} \circ \range{f} &&\text{if $\cod(f)=\dom(g)$} \tag{RR3} \\
  \range{g \circ \range{f}}& = \range{g \circ f} &&\text{if $\cod(f)=\dom(g)$} \tag{RR4}
\end{align}
We call $\range{f}$ the range of $f$. 
\end{definition}

Note that not every range category is automatically a corestriction category. Indeed, being a corestriction category does not require restriction structure, whereas being a range category does. Here is the paradigmatic example.

\begin{example} $\cat{Par}$ is a range category where the range of a for a partial function $f \colon X \to Y$ is defined as follows: 
  \begin{align*}
\range{f}(y) = \begin{cases} 
y  & \text{ if $f(x)=y$ for some $x \in X$}  \\
  \text{undefined} & \text{ otherwise } 
    \end{cases}
  \end{align*}
  However, this does not make $\cat{Par}$ a corestriction category since (CR4) fails. 
\end{example}

If a category is both a restriction category and a corestriction category, and if the restriction and corestriction operators are compatible, then in fact the corestriction operator is also a range category. We introduce a new notion of \emph{birestriction category} (which is a stronger version of a bisupport category \cite{cockettlack:restrictioncategories}).

\begin{definition} A \emph{birestriction category} is a category equipped with both a restriction operator $\rest{\phantom{f}}$ and a corestriction operator $\range{\phantom{f}}$ satisfying: 
\begin{align}  
   \range{\rest{f}} & = \rest{f} \tag{BR1} \\
  \rest{\range{f}} & = \range{f} \tag{BR2}
\end{align}
\end{definition}

\begin{example}
  If $\cat{X}$ is both a restriction category and a dagger category~\cite{heunenkarvonen:daggermonad}, and restriction idempotents are self-adjoint, that is, $\rest{f}^\dag = \rest{f}$, then $\cat{X}$ is automatically a birestriction category with $\range{f} = \rest{f^\dag}$.
\end{example}

\begin{lemma} 
  In a birestriction category, the corestriction operator is a range operator for the restriction operator. % and similarly, the restriction operator is a corange operator for the corestriction operator. 
\end{lemma}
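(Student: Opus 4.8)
The plan is to unwind the target statement into the four range-category axioms (RR1)--(RR4) for the corestriction operator $\range{\phantom{f}}$ and to verify each using only the corestriction axioms (CR1)--(CR4), a couple of restriction axioms, and the two birestriction identities (BR1)~$\range{\rest f}=\rest f$ and (BR2)~$\rest{\range f}=\range f$. Two of the four come for free: (RR1), which reads $\rest{\range f}=\range f$, is literally (BR2); and (RR2), which reads $\range f\circ f=f$, is literally (CR1).

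For (RR3), namely $\range{\rest g\circ f}=\rest g\circ\range f$ when $\cod(f)=\dom(g)$, the idea is to use (BR1) to rewrite $\rest g$ as $\range{\rest g}$, so the left-hand side becomes $\range{\range{\rest g}\circ f}$. Since $\rest g$ and $f$ have the same codomain (namely $\dom(g)$), axiom (CR3) applies directly and gives $\range{\range{\rest g}\circ f}=\range{\rest g}\circ\range f$; a final application of (BR1) to the first factor yields $\rest g\circ\range f$.

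For (RR4), namely $\range{g\circ\range f}=\range{g\circ f}$ when $\cod(f)=\dom(g)$, the plan is to start from (CR4), which gives $g\circ\range f=\range{g\circ f}\circ g$, and take $\range{\phantom{f}}$ of both sides; applying (CR3) (valid because $g\circ f$ and $g$ share their codomain) reduces the goal to the ``absorption'' identity $\range{g\circ f}\circ\range g=\range{g\circ f}$. That identity follows by noting $\range g\circ(g\circ f)=g\circ f$ by (CR1), hence $\range{g\circ f}=\range{\range g\circ(g\circ f)}=\range g\circ\range{g\circ f}$ by (CR3), and then commuting the two corestriction idempotents on the common codomain via (CR2).

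I expect the absorption step $\range{g\circ f}\circ\range g=\range{g\circ f}$ inside the proof of (RR4) to be the only genuine obstacle -- it is the single place where one must combine (CR1), (CR3) and (CR2) rather than quote one axiom, while everything else is a direct rewrite. It may be worth recording, as an auxiliary observation, that (BR1)/(BR2) together with (R2)/(CR2) force any restriction idempotent and any corestriction idempotent on the same object to commute; this is the conceptual reason the manipulations above go through, even though the argument does not strictly need the lemma in that packaged form.
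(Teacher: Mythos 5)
Your proof is correct and follows essentially the same route as the paper: (RR1) and (RR2) are immediate from (BR2) and (CR1) respectively, and your (RR3) argument is exactly the paper's chain $\range{\rest{g} \circ f} = \range{\range{\rest{g}} \circ f} = \range{\rest{g}} \circ \range{f} = \rest{g} \circ \range{f}$ via (BR1) and (CR3). The only difference is (RR4), which the paper dispatches by citing the dual of a standard restriction-category identity (\cite[Lemma 2.1.(iii)]{cockettlack:restrictioncategories}) whereas you derive it directly from (CR1)--(CR4); your derivation, including the absorption step $\range{g \circ f} \circ \range{g} = \range{g \circ f}$ obtained from (CR1), (CR3) and (CR2), is a correct unfolding of that cited fact.
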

\begin{proof} Let $\rest{\phantom{f}}$ be the restriction operator and $\range{\phantom{f}}$ be the corestriction operator. We need to show that $\range{\phantom{f}}$ satisfies (RR1)--(RR4). The first two are immediate since (RR1) is simply (BR2) and (RR2) is (CR1). For (RR3), we use (CR3) and (BR1):
\[ \range{\rest{g} \circ f} = \range{\range{\rest{g}} \circ f} = \range{\rest{g}} \circ \range{f} = \rest{g} \circ \range{f} \]
Finally, (RR4) is the dual of \cite[Lemma 2.1.(iii)]{cockettlack:restrictioncategories}. So $\range{\phantom{f}}$ is a range operator. %, and dually, $\rest{\phantom{f}}$ is a corange operator. 
\end{proof}

The point of introducing birestriction categories is that the $\S$-construction induces one, which may lead to an alternative axiomatisation.

\begin{proposition}  If $\cat{C}$ is a firm monoidal category, then $\S[\cat{C}]$ is a birestriction category.
\end{proposition}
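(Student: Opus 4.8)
The plan is to reuse the two structures already established on $\S[\cat{C}]$: it is a restriction category by Proposition~\ref{prop:SCrestriction} and a corestriction category by Proposition~\ref{prop:SCcorestriction}. Both operators $\rest{\phantom{f}}$ and $\range{\phantom{f}}$ are therefore in place, so it only remains to verify the two compatibility axioms (BR1) $\range{\rest{f}}=\rest{f}$ and (BR2) $\rest{\range{f}}=\range{f}$.

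The observation that makes this routine is that each operator throws away the morphism data entirely and records only the subunit: by Proposition~\ref{prop:SCrestriction}, for $[s,f]\colon A\to B$ the restriction $\rest{[s,f]}$ is represented by $s$ together with a morphism $A\otimes S\to A$ built from $s$ alone (graphically, the identity wire on $A$ alongside the subunit on $S$); and by Proposition~\ref{prop:SCcorestriction}, $\range{[s,f]}$ is represented by $s$ together with the analogous morphism $B\otimes S\to B$. Crucially, morphisms of this shape are fixed by both operators: for any object $X$ and subunit $s$, the endomorphism of $X$ of this form is a restriction idempotent by Proposition~\ref{prop:restrictionidempotentsinS}, hence unchanged by $\rest{\phantom{f}}$, and inspecting the corestriction formula of Proposition~\ref{prop:SCcorestriction} shows it is unchanged by $\range{\phantom{f}}$ as well.

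With this in hand, (BR1) and (BR2) are immediate. For (BR1), $\rest{[s,f]}$ is a morphism $A\to A$ whose associated subunit is $s$; applying $\range{\phantom{f}}$ — which only looks at the subunit $s$ and the codomain $A$ — gives back the same morphism, so $\range{\rest{[s,f]}}=\rest{[s,f]}$. For (BR2), $\range{[s,f]}$ is a morphism $B\to B$ with associated subunit $s$, and applying $\rest{\phantom{f}}$ reproduces it, so $\rest{\range{[s,f]}}=\range{[s,f]}$. I do not expect any genuine obstacle; the only thing needing care is the bookkeeping, checking that the explicit formulas of Propositions~\ref{prop:SCrestriction} and~\ref{prop:SCcorestriction} really do yield literally the same equivalence class whichever order the two operators are applied in.
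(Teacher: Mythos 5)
Your proposal is correct and follows exactly the route the paper intends: the paper's own proof simply declares (BR1) and (BR2) straightforward, and your verification---that both $\rest{[s,f]}$ and $\range{[s,f]}$ are of the form $[s,\,\rho\circ(X\otimes s)]$ for $X$ the domain or codomain respectively, and that morphisms of this form are fixed by both operators---is precisely the omitted computation.
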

\begin{proof} 
  It is straightforward to see that both (BR1) and (BR2) hold in $\S[\cat{C}]$. 
\end{proof}

\bibliographystyle{plain}
\bibliography{bibliography}

\end{document}